\theoremstyle{plain}
\newtheorem{theorem}{Theorem}[section]
\newtheorem{corollary}[theorem]{Corollary}
\newtheorem{lemma}[theorem]{Lemma}
\theoremstyle{definition}
\newtheorem{definition}[theorem]{Definition}
\newtheorem{remark}[theorem]{Remark}
\newtheorem{example}[theorem]{Example}
\newcommand{\C}{\mathbb{C}}
\newcommand{\Z}{\mathbb{Z}}
\newcommand{\N}{\mathbb{N}}
\newcommand{\lc}{\text{lc}}
\newcommand{\lcu}{\text{lcu}}
\newcommand{\lcs}{\text{lcs}}
\newcommand{\sync}{\text{sync}}
\newcommand{\I}{\mathcal{I}}
\newcommand{\lang}{\mathcal{L}}
\newcommand{\ev}{{\text{even}}}
\newcommand{\parens}[1]{\left( #1 \right)}
\newcommand{\rank}{{\text{rank}}}
\newcommand{\word}[1]{\mathtt{#1}}
\newcommand{\ie}{\textit{i.e.}\ }
\title[Synchronizing Dynamical Systems: Shift Spaces and $K$-Theory]{Synchronizing Dynamical Systems: \\ Shift Spaces and $K$-Theory}
\author{Robin J. Deeley}
\address{Robin J. Deeley,   Department of Mathematics,
University of Colorado Boulder
Campus Box 395,
Boulder, CO 80309-0395, USA }
\email{robin.deeley@colorado.edu}
\author{Andrew M. Stocker}
\address{Andrew M. Stocker,   Department of Mathematics,
University of Colorado Boulder
Campus Box 395,
Boulder, CO 80309-0395, USA }
\email{andrew.stocker@colorado.edu}
\thanks{This work was partially supported by the National Science Foundation under Grants No. DMS 2000057 and 2247424 and Simons Foundation Gift MP-TSM-00002896.}
\begin{document}

\begin{abstract}
Building on our previous work, we give a thorough presentation of the techniques developed for synchronizing dynamical systems in the special case of synchronizing shift spaces.  Following work of Thomsen, we give a construction of the homoclinic, the heteroclinic, and synchronizing heteroclinic $C^\ast$-algebras along with the synchronizing ideal of a shift space in terms of Bratteli diagrams. The algebras introduced in our previous work (the synchronizing ideal, and synchronizing heteroclinic algebra) are discussed in detail. In the sofic shift case, these algebras are shown to be related to the $C^\ast$-algebras of its minimal left and minimal right presentations. Several specific examples are discussed to demonstrate these techniques.  For the even shift we give a complete computation of all the associated invariants. We discuss these algebras for a sofic shift that is not of almost finite type and for a number of strictly non-sofic synchronizing shifts.  In particular we discuss the rank of the $K$-theory of the homoclinic algebra of a shift space and its synchronizing ideal and its implications. We also give a construction for producing from any minimal shift a synchronizing shift whose set of non-synchronizing points is exactly the original minimal shift.
\end{abstract}

\maketitle

\section*{Introduction}

In \cite{deeley2022} we have developed the basic theory of synchronizing dynamical systems (or henceforth just ``synchronizing systems'') and building on work of Thomsen the $C^\ast$-algebras associated with such systems.  The theory of synchronizing systems is fundamentally based on techniques established in the study of Smale spaces.  Smale spaces are dynamical systems which are uniformly hyperbolic \cite{putnam99, putnam_1996, ruelle_2004, smale67}.  Synchronizing systems, on the other hand, are hyperbolic almost everywhere. That is, there is an open dense set of points that have local hyperbolic behavior. The precise definition of having ``local hyperbolic behavior" is involved. We will discuss this later in the introduction and in Section \ref{sec:sync-words}. It is also discussed in greater detail in \cite{deeley2022}.  

The term ``synchronizing'' is borrowed directly from the field of symbolic dynamics where there is a notion of a synchronizing shift \cite{blanchard86, fiebig91}.  Our definition of synchronizing system generalizes this class of shift spaces, see Lemma \ref{lem:same-def}.  Synchronizing shifts are thus an important example of synchronizing systems. In fact, the focus of the current paper is on synchronizing shifts, their $C^\ast$-algebras, and the $K$-theory of these $C^\ast$-algebras viewed as an invariant of the shift space.

Synchronizing systems are expansive. Expansive dynamical systems are topological dynamical systems that exhibit sensitivity to initial conditions. The precise definition is as follows: we say that a dynamical system $(X,\varphi)$ (throughout $X$ is a compact metric space and $\varphi : X \to X$ is a homeomorphism) is \emph{expansive} if there exists a constant $\varepsilon_X > 0$ such that $d(\varphi^n(x),\varphi^n(y)) \leq \varepsilon_X$ for all $n \in \Z$ implies $x = y$. Shift spaces are expansive and more generally of crucial importance in the study of expansive systems. In particular, every expansive system is a factor of a shift space \cite{walters1981}. Hence, there is a direct connection between the combinatorial nature of shift spaces and the dynamics of expansive systems.  This further justifies the current paper's focus on shift spaces in particular.  Additionally, we 

\begin{enumerate}
\item generalize results from the class of shifts of finite type where the $C^\ast$-algebras and their $K$-theory are well-understood,
\item provide explicit computations of the invariants presented in \cite{deeley2022} building on work of Thomsen, and
\item develop examples showing the limits of possible generalizations.
\end{enumerate}

A synchronizing shift is an irreducible shift space $X$ which admits a word $w \in \lang(X)$ with the following property: if $u,v \in \lang(X)$ are such that $uw \in \lang(X)$ and $wv \in \lang(X)$, then $uwv \in \lang(X)$.  An arbitrary shift space may not even have a synchronizing word, and not all words in a synchronizing shift have this property (a word with this property is itself called synchronizing).  A synchronizing shift in which all words are eventually synchronizing is a shift of finite type, see \cite[Theorem 2.1.8]{lindmarcus}.  However the class of synchronizing shifts contains many shift spaces that are not shifts of finite type.  Most importantly every irreducible sofic shift is synchronizing, see \cite[Corollary 3.11]{deeley2022}. Beyond the irreducible sofic shifts there are many interesting synchronizing shifts that are strictly non-sofic, see Section \ref{sec:infinite-rank} and Section \ref{sec:minimal} for examples.

We will now draw a connection between the concept of a synchronizing word and a point in a shift space having local hyperbolic behavior.  Suppose $X$ is a synchronizing shift space and $x \in X$, $N \geq 0$ are such that $x_{[-N,N]}$ is a synchronizing word.  If  $y^-$ and $z^+$ are any left infinite and right infinite rays that may be concatenated to the left and right of $x_{[-N,N]}$ (respectively), then $x' = y^- x_{[-N,N]} z^+$ is a valid element of $X$ due to $x_{[-N,N]}$ being synchronizing.  Hence the set of such left/right-infinite rays $\{ (y^-, z^+) \}$ is a Cartesian product.  In fact this set corresponds to a set of pairs $(y,z)$ in $X$ for which $z$ is stably equivalent to $x$, in the sense that for some $N \in \Z$ we have $z_n = x_n$ for all $n \geq N$; and $y$ is unstably equivalent to $x$ in the symmetric sense.  By the above, the set of such $(y,z)$ is also a Cartesian product.  Stable and unstable equivalence are a dynamical notion, and it is in this sense that the dynamical behavior of $(X,\sigma)$ near $x$ can be seen to be hyperbolic.  This connection is spelled out explicitly in Section \ref{sec:sync-words}.

With techniques developed here and in \cite{deeley2022}, synchronizing systems represent a significant broadening of the types of expansive dynamical systems for which we have tools to study their $C^\ast$-algebras and $K$-theory.  In \cite{thomsen2010c}, Thomsen has shown that a $C^\ast$-algebra $A(X,\varphi)$ called the \emph{homoclinic algebra} can be associated to any expansive dynamical system.  See Section \ref{sec:shift-bratteli-homoclinic} for a description of this algebra when $X$ is a shift space --- this construction is entirely due to Thomsen.  In the case when $X$ is a synchronizing system however, we have shown in \cite{deeley2022} that there is an ideal $\I_\sync(X,\varphi) \subseteq A(X,\phi)$ determined by the set of synchronizing points in $X$.  Hence the fundamental structure of the homoclinic algebra of a synchronizing system can be understood in terms of the following short exact sequence.
\[ 0 \longrightarrow \I_\sync(X, \varphi) \longrightarrow A(X,\varphi) \longrightarrow A(X,\varphi) / \I_\sync(X,\varphi) \longrightarrow 0 \]
When $X$ is a synchronizing shift, the groupoid $G^\lc(X,\sigma)$ used to build the homoclinic algebra is amenable and so we obtain that \[ A(X,\sigma) / \I_\sync(X,\sigma) \cong C^\ast\left(G^\lc(X,\sigma)|_{X_\text{non-sync}}\right) \] in the above short exact sequence.  Note that in general this quotient is not the same as the homoclinic algebra of the subshift of non-synchronizing points $X_\text{non-sync}$ --- see Section \ref{sec:gicar} for an explicit example.

Irreducible finitely presented systems, introduced by Fried in \cite{fried1987}, are shown to also be synchronizing systems in \cite{deeley2022}.  The shift spaces that are finitely presented systems are exactly the sofic shifts introduced by Weiss in \cite{Weiss1973SubshiftsOF}.  Hence an irreducible sofic shift is synchronizing.  However, when a sofic shift is mixing we may understand the structure of $\I_\sync(X,\sigma)$ in a very explicit way as follows.  In \cite{thomsen2010c}, Thomsen introduced the heteroclinic algebras for an expansive dynamical system in which periodic points are dense.  A main result of \cite{deeley2022} is that synchronizing systems, and thus also irreducible finitely presented systems, have a dense set of periodic points.  However, Thomsen's heteroclinic algebras are built using the set of all periodic points in $X$, and in a synchronizing system the synchronizing and non-synchronizing periodic points behave differently.  In \cite{putnam99}, Putnam and Spielberg construct the stable and unstable algebras of a Smale space by fixing a periodic point (or finite set of periodic orbits).  They then show that up to Morita equivalence this choice does not matter.  Inspired by this, we modify Thomsen's heteroclinic algebras so that we instead fix a single \emph{synchronizing} periodic point (or more generally a finite set of them).  We have also shown that this choice does not matter up to Morita equivalence, see \cite[Theorem 5.5]{deeley2022}.  Hence we call this modified construction collectively the heteroclinic synchronizing algebras of $(X, \varphi)$.  More specifically we associate to a synchronizing system the stable synchronizing algebra $S(X,\varphi,p)$ and unstable synchronizing algebra $U(X,\varphi,p)$ where $p \in X$ is a synchronizing periodic point.  A fundamental result in \cite{deeley2022} is that when $(X,\varphi)$ is a mixing finitely presented system, we have the Morita equivalence \[ S(X,\varphi,p) \otimes U(X,\varphi,p) \sim_\text{M.E.} \I_\sync(X,\varphi) \,. \]
Hence this Morita equivalence in particular holds for mixing sofic shifts, of which the even shift is a prime example.  In Section \ref{sec:even-shift} we show how this Morita equivalence can be seen explicitly in our computations. 

Furthermore, we prove that the $C^\ast$-algebra $S(X,\varphi,p)$ is isomorphic to the stable algebra of the minimal right resolving cover of the given sofic shift. The minimal right resolving cover is a shift of finite type and its stable algebra is well-understood. In a similar way, $U(X, \varphi, p)$ is isomorphic to the unstable algebra of the minimal left resolving cover of the given sofic shift. In summary, one of the main results of this paper is a complete description of the $C^\ast$-algebras associated to a mixing sofic shift in terms of its minimal left-resolving presentation, its minimal right-resolving presentation, and its set of non-synchronizing points (that is, points without a local hyperbolic structure). For strictly non-sofic shifts the situation is more complex, see Section \ref{sec:infinite-rank} and Section \ref{sec:minimal}.

Although synchronizing shift spaces are a special case of the synchronizing systems discussed in \cite{deeley2022}, we aim to make this paper as self-contained as possible.  In Section \ref{sec:shift-spaces} we will give an introduction to shift spaces.  We will also introduce shifts of finite type and sofic shifts.  Section \ref{sec:sync-words} will be specifically focused on synchronizing shifts.  In this section we show that our definition of synchronizing given in \cite{deeley2022} generalizes the class of synchronizing shifts.

In Section \ref{sec:local-conjgacy} we discuss the equivalence relations defined in \cite{deeley2022} in the context of shift spaces.  Then in Section \ref{sec:shift-algebras} we show how these equivalence relations can be produced inductively using Bratteli diagrams.  From these Bratteli diagrams we define several different $C^\ast$-algebras.  The $K$-theory of these $C^\ast$-algebras are conjugacy invariants of the underlying shift spaces \cite{thomsen2010c}.  In the examples to follow we show how these $K$-theory groups contain information about the asymptotic behavior of shift spaces.  We also discuss the short exact sequence determined by the synchronizing ideal, a Morita equivalence which relates these $C^\ast$-algebras, and the relationship between the (un)stable synchronizing algebra and those of the minimal right and minimal left resolving presentations.

In Section \ref{sec:even-shift} through Section \ref{sec:minimal} contains several examples. They demonstrate some of the possible behavior (both at the shift level and the $C^\ast$-algebra level) that can be observed in a synchronizing shift. We give examples for which
\begin{enumerate}[(i)]
    \item the set of non-synchronizing points is a single point (Section \ref{sec:even-shift}),
    \item the set of non-synchronizing points is infinite (Section \ref{sec:dc-shift}),
    \item a sofic shift whose synchronizing ideal is not Morita equivalent to one of a shift of finite type (Section \ref{sec:sofic-not-finite}),
    \item\label{item:finite-rank} the rank of the $K$-theory of the homoclinic algebra is infinite (Section \ref{context-free-shift}),
    \item the quotient algebra can be described intrinsically (Section \ref{sec:gicar})
    \item and lastly an example where the set of non-synchronizing points is a minimal shift space (Section \ref{sec:minimal}).
\end{enumerate}
The even shift, denoted $X_\ev$, is presented in Section \ref{sec:even-shift}.  This is intended to be a complete example where all invariants are computed.  We remark that Thomsen has computed the Bratteli diagram for the homoclinic algebra of the even shift in \cite{thomsen2010c}, although our computation is perhaps more explicit. The even shift has a single non-synchronizing point, and thus we obtain the short exact sequence \[ 0 \longrightarrow \I_\sync(X_\ev, \sigma) \longrightarrow A(X_\ev, \sigma) \longrightarrow \C \longrightarrow 0 \, \]
where $\I_\sync(X_\ev, \sigma)$ is completely understood using the golden mean shifts (which is both the minimal right and left resolving cover of the even shift). We also discuss how the failure of the even shift to be a shift of finite type can be seen through the fact that the homoclinic algebra of the even shift is not Morita equivalent to the tensor product of its heteroclinic algebras.

However it is not the case the set of non-synchronizing points must be finite or even countable --- such an example is demonstrated in the case of the charge-constrained shift (which is of almost finite type) in Section \ref{sec:dc-shift}.  Also note for the shift space in Section \ref{sec:gicar}, the subshift of non-synchronizing points is exactly the full 2-shift.

The significance of (\ref{item:finite-rank}) is that it is known that $K$-theory of the homoclinic algebra of a sofic shift space is of finite rank, see Section \ref{sec:infinite-rank}.  Hence, the rank of the $K$-theory of the homoclinic algebra of a shift space $X$ may be seen as obstructing $X$ belonging to the class of sofic shifts.  In Section \ref{context-free-shift}, we give a thorough examination of an example of a synchronizing shift for which the rank of the $K$-theory of the homoclinic algebra is infinite.  This involves using infinite labeled graph representations and relating the rank of the $K$-theory of the homoclinic algebra to paths on these graphs.   The authors suspect that a similar technique could be use to prove that the $K$-theory of the homoclinic algebra is infinite rank for other synchronizing shifts which have the following property: there exists a finite set $F \subseteq \lang_\sync(X)$ such that $w \in \lang(X)$ is synchronizing if and only if $u \sqsubseteq w$ for some $u \in F$.

One may wonder what kinds of shift spaces may appear as the subshift of non-synchronizing points in a synchronizing shift.  We have seen an example where the subshift of non-synchronizing points is the full 2-shift, and the shift space presented in Section \ref{context-free-shift} also has a shift of finite type as its subshift of non-synchronizing points.  Unfortunately, there does not appear to be any sort of constraint on the type of shifts that can appear.  In Section \ref{sec:minimal} we show that, given any minimal shift $M$, we can construct a synchronizing shift whose set of non-synchronizing points is exactly $M$.

There are number of $C^\ast$-algebras associated to a shift space in this paper. We summarize these algebras here to avoid any confusion. Three were defined by Thomsen \cite{thomsen2010c} (building on work of Ruelle \cite{ruelle1988} and Putnam \cite{putnam_1996}). Thomsen's constructions are the start point for our work. Thomsen's defined the homoclinic and the two heteroclinic (one for the forward shift and one for the backward shift) $C^\ast$-algebras. In our previous work \cite{deeley2022}, we defined three $C^\ast$-algebras. They are the synchronizing ideal and the two synchronizing heteroclinic algebras. The key difference between our algebras and Thomsen's is that ours avoid the non-synchronizing points. The results of the present paper show how by excluding the non-synchronizing points, we get $C^\ast$-algebras that are either directly or at least more closely related to the $C^\ast$-algebras associated to shifts of finite type (and in higher dimensions Smale spaces).

\section*{Acknowledgments}
We thank Ian Putnam for many helpful comments.

\section{Shift Spaces}\label{sec:shift-spaces}

Let $\mathcal{A}$ be a finite set of symbols which we will call an \emph{alphabet}.  Then the space \[ \mathcal{A}^\mathbb{Z} = \{ \ldots x_{-2} x_{-1} . x_0 x_1 x_2 \ldots \mid x_i \in \mathcal{A} \text{ for all } i \in \Z \} \] with the product topology is a compact topological space which is also totally disconnected.  We will typically write a sequence with a decimal point to indicate the zero-th index.  Let $\sigma : \mathcal{A}^\mathbb{Z} \to \mathcal{A}^\mathbb{Z}$ be the map defined by $\sigma(x)_i = x_{i+1}$.  This is called the \emph{shift map}.  It is a homeomorphism of $\mathcal{A}^\mathbb{Z}$ \cite{putnamNotes19}.

\begin{definition} 
A \emph{shift space} is a dynamical system $(X,\sigma)$ where $\sigma$ is the shift map and $X \subseteq \mathcal{A}^\Z$ is a closed subspace of $\mathcal{A}^\mathbb{Z}$ which is invariant under $\sigma$.  
\end{definition}

Since the dynamical map on a shift space is always $\sigma$ (implicitly restricted to $X$) we will often denote a shift space $(X, \sigma)$ as just $X$.  For a more detailed treatment of shift spaces, see \cite{lindmarcus}.

It will be useful for us to consider all the words of finite length (including the empty word) that appear in elements of a shift space.  Let the \emph{language} of $X$, denoted $\mathcal{L}(X)$, be the collection of all finite strings that appear in elements of $X$, that is, $w \in \mathcal{L}(X)$ if and only if $w = x_k x_{k+1} \cdots x_{k+n}$ for some $x \in X$ --- we denote this $w \sqsubseteq x$.  We will also often denote the word $x_i x_{i+1} \cdots x_j$ as $x_{[i, j]}$ for $-\infty < i \leq j < \infty$, or $x_{[i,j)}$ for the word $x_i x_{i+1} \cdots x_{j-1}$ for $-\infty < i < j \leq \infty$ (likewise for $x_{(i,j]}$).  Additionally we will also denote the length of a word $w$ as $|w|$.  Lastly, we will write the set of words of length $n$ appearing in elements of $X$ as $\mathcal{L}_n(X)$, that is \[ \mathcal{L}_n(X) = \mathcal{L}(X) \cap \mathcal{A}^n = \{ w \in \lang(X) \mid |w| = n \} \,. \]

\begin{example}
For an example of what $\mathcal{L}(X)$ looks like we can consider the \emph{full 2-shift}, which is defined as the shift space $X = \{\word{0}, \word{1}\}^\mathbb{Z}$.  Then $\displaystyle \mathcal{L}(X) = \bigcup_{n \geq 0} \{\word{0}, \word{1}\}^n$ which is the set of all finite binary words.
\end{example}

The topology on $X$ is generated by what are called cylinder sets.  A \emph{cylinder set} is a set of the form \[ C(w) = \{ x \in X \mid x_{[-n,n]} = w \} \] for $w \in \mathcal{L}_{2n + 1}(X)$.  Note that these are clopen sets in the topology on $X$.  Hence if $x \in X$ then the cylinder sets $C(x_{[-n,n]})$ are neighborhoods of $x$ for all $n \geq 0$.  Furthermore, the topology on a shift space is given by the following metric: \[ d(x,y) = \,\inf_{n \geq 1} \left\{ 2^{-|n|} \,\middle|\, x_i = y_i \text{ for all } |i| < n \right\} \] when $x_0 = y_0$, and $d(x,y) = 1$ otherwise.

In fact it is not too hard to see that shift spaces are also expansive.  The proof is as follows, let $X$ be a shift space and fix $\varepsilon_X = 1$.  Observe that for two distinct points $x,y \in X$ there is some $n \in \mathbb{Z}$ such that $x_n \neq y_n$, meaning $d(\sigma^n(x), \sigma^n(y)) = 1$.  This shows that $\varepsilon_X = 1$ is an expansiveness constant for any shift space assuming we are using the above metric.

\begin{definition}
A shift space $X$ is called \emph{irreducible} if for each ordered pair $w,v \in \lang(X)$, there is a $u \in \lang(X)$ such that $wuv \in \lang(X)$.
\end{definition}

\begin{definition}
A shift space $X$ is called \emph{mixing} if each each $w,v \in \lang(X)$ there is an $N$ such that for each $n \geq N$ there exists $u \in \lang_n(X)$ such that $wuv \in \lang(X)$.
\end{definition}

It is well known that the above definition of irreducibility agrees with the notion of irreducibility for topological dynamical systems.

\subsection{Shifts of Finite Type}\label{sec:sft}

Let $\mathcal{A}$ be alphabet as defined above, and let $F \subseteq \mathcal{L}(\mathcal{A}^\mathbb{Z})$ be a finite set of words in $\mathcal{A}$ which we call a set of \emph{forbidden words}.  Then let $X_F \subseteq \mathcal{A}^\mathbb{Z}$ be the set \[ X_F = \{ x \in \mathcal{A}^\mathbb{Z} \mid w \not\sqsubseteq x \text{ for all } w \in F \} \,. \]  Note that $X_F$ is a shift invariant subset of $\mathcal{A}^\Z$.  A shift space $X_F$ constructed in this way is called a \emph{shift of finite type (SFT)}, see \cite{lindmarcus} for more details and an introduction to SFTs.

A useful way to construct SFTs is as follows.  Let $G = (V,E)$ be a finite directed graph, then let $X_G$ be the space of all bi-infinite paths on $G$ which we topologize as subspace of $E^\mathbb{Z}$ with  the product topology.  Let us also denote as $r,s: E \to V$ the functions which send each edge on $G$ to its range and source vertex respectively.  Then $X_G$, which we call the \emph{edge shift} of $G$, is a shift of finite type since we must only forbid words of the form $F = \{ef \mid e,f \in E \text{ and } r(e) \neq s(f) \}$, which is a finite subset of $\mathcal{L}(E^\mathbb{Z})$.

Besides this being a useful construction, it is actually the case that any SFT is conjugate to the edge shift of some graph $G$.  Let $X_F$ be a SFT, then since $F$ is a finite set there is an $N \geq 1$ such that $|w| \leq N$ for all $w \in F$.  We may then construct a graph $G = (V,E)$ by letting $V = \mathcal{L}_{N-1}(X_F)$, $E = \mathcal{L}_N(X_F)$, $r(e) = e_2 e_3 \cdots e_N$, and $s(e) = e_1 e_2 \cdots e_{N-1}$.  Such a graph $G$ is called a \emph{presentation} of $X_F$ and it is not necessarily unique.  By a result in \cite[Section 2.3]{lindmarcus}, $X_F$ is conjugate to $X_G$.

\begin{example}\label{ex:full-2-shift} (\cite[Definition 1.1.1]{lindmarcus}) 
For example, every \emph{full $n$-shift}, that is the shift space $X_n = \{\word{0}, \word{1}, \dots, \word{n-1} \}^\mathbb{Z}$, is a shift of finite type.  The full $n$-shift can be built from a graph with a single vertex and $n$ edges.
\end{example}

\begin{example}\label{ex:golden-mean-shift} (\cite[Example 1.2.3]{lindmarcus})
Consider the shift space $X \subseteq \{\word{a}, \word{e}, \word{f} \}^\mathbb{Z}$ given by the following graph $G$.
\begin{center}
\begin{tikzpicture}
\tikzset{every loop/.style={looseness=6, in=130, out=230}}
\pgfmathsetmacro{\S}{0.9}

\node[shape=circle, draw=black] (A) at (0,0) {$v$};
\node[shape=circle, draw=black] (B) at (1,0) {$w$};
\path[->,>=stealth] (A) edge[loop left] node[scale=\S, left] {$\word{a}$} (A);
\path[->,>=stealth] (A) edge[out=60, in=120] node[scale=\S, above] {$\word{e}$} (B);
\path[->,>=stealth] (B) edge[out=240, in=300] node[scale=\S, below] {$\word{f}$} (A);
\end{tikzpicture}    
\end{center}
This is a shift space called the \emph{golden mean shift}.  It is conjugate to the shift of finite type $X_F \subseteq \{\word{0}, \word{1}\}^\mathbb{Z}$ with the set of forbidden words $F = \{ \word{11} \}$.  To see this observe that if we construct the edge shift for $X_F$ with $N = 2$, we obtain the above graph with the labeling $\word{a} = \word{00}$, $\word{e} = \word{01}$, and $\word{f} = \word{10}$ --- see \cite[Section 2.3]{lindmarcus} for more details.
\end{example}

\subsection{Sofic Shifts}\label{sofic-shifts}

Similar to shifts of finite type, sofic shifts are constructed from a graph presentation except now we will consider graphs with labeled edges.  Let $G = (V,E)$ be a finite directed graph as before and let $\pi : E \to \mathcal{A}$ be a function which assigns to each edge $e \in E$ a label $\pi(e) \in \mathcal{A}$.  The function $\pi$ extends to a continuous function $\pi : X_G \to \mathcal{A}^\mathbb{Z}$ where $X_G$ is the SFT constructed from the unlabeled graph $G$.  Let $X$ denote the image of $X_G$ under $\pi$, then $\pi:X_G \to X$ is a factor map from $(X_G,\sigma)$ to $(X,\sigma)$.  The function $\pi$ also extends to a function $\pi:\lang_n(X_G) \to \lang_n(X)$ which assigns to each finite path in $G$ the word \[ \pi(e_1 e_2 \cdots e_n) = \pi(e_1) \pi(e_2) \cdots \pi(e_n) \] in $\lang(X)$.  We say that a path $p \in \lang_n(X_G)$ \emph{represents} a word $w \in X$ if $\pi(p) = w$.  Shift spaces constructed in this way are called \emph{sofic shifts}.  Note that every shift of finite type is also a sofic shift.  Sofic shifts can be succinctly characterized as those shift spaces that are factors of SFTs \cite[Section 3.1]{lindmarcus}.

\begin{example}\label{even-shift} (\cite[Example 1.2.4]{lindmarcus})
One of the running examples in this paper will be the \emph{even shift}, which is the sofic shift space in the alphabet $\{\word{0}, \word{1}\}$ whose elements do not contain any of the finite words in the set $F = \{\word{10}^{2k+1}\word{1} \mid k \geq 0\}$.  That is to say that the elements of the even shift are infinite binary sequences which have an even number of consecutive zeros between any two ones.  We will denote the even shift as $X_\ev$.  We can see that the even shift is not a shift finite type since there is no upper bound on the size of the words that we must forbid.  More precisely, the countable set $F$ is a minimal set of forbidden words for $X_\ev$.  However, the even shift is indeed sofic since it admits the following graph presentation.

\vspace{1em}
\begin{center}
\begin{tikzpicture}
\tikzset{every loop/.style={looseness=10, in=130, out=230}}
\pgfmathsetmacro{\S}{0.9}

\node[shape=circle, draw=black] (A) at (0,0) {};
\node[shape=circle, draw=black] (B) at (1,0) {};
\path[->,>=stealth] (A) edge[loop left] node[scale=\S, left] {$\word{1}$} (A);
\path[->,>=stealth] (A) edge[out=60, in=120] node[scale=\S, above] {$\word{0}$} (B);
\path[->,>=stealth] (B) edge[out=240, in=300] node[scale=\S, below] {$\word{0}$} (A);
\end{tikzpicture}    
\end{center}
In this case, the edge shift $X_G$ coming the unlabeled graph above is exactly the golden mean shift discussed in Example \ref{ex:golden-mean-shift}.  In other words, the even shift is a factor of the golden mean shift.

It is important to note that the factor map $\pi : X_G \to X$ may not be injective.  For example in the even shift there are two elements of $X_G$ that get mapped to the point $\overline{\word{0}} = \dots \word{00.000} \dots$.  However in this case this is the only such point since as soon there is a $\word{1}$ in a sequence $x \in X$, we know exactly which element in $X_G$ must correspond to it.  This follows from the fact that there is only one vertex with an arrow labeled $\word{1}$ pointing to it.  Hence $\pi$ is injective on almost all points in $X_G$ except the two points that get mapped to $\overline{\word{0}}$.
\end{example}

\begin{definition}\label{def:resolving-map}
Suppose $(G, \pi)$ is a labelled graph and $X$ is the associated to sofic shift. We call $(G, \pi)$ right-resolving if for each vertex $I\in V$, the restriction of $\pi$ to the set $\{ e\in E \mid s(e)=I \}$ is injective. Such a labelled graph is called a right-resolving presentation of $X$. We call a right-resolving presentation minimal if $G$ has the fewest vertices among all right-resolving presentations of the sofic shift. We also call the shift of finite type associated to $G$, $\Sigma_G$, along with the map induced from $\pi$, the minimal right-resolving cover of $X$.

Likewise, we call $(G, \pi)$ left-resolving if for each vertex $I\in V$, the restriction of $\pi$ to the set $\{ e\in E \mid r(e)=I \}$ is injective and call such a labelled graph a left-resolving presentation. The definition of minimal and minimal cover are as in the previous paragraph with left replacing right.
\end{definition}

\begin{example}\label{even-shift-left-right}
One can check that the presentation of the even shift given in Example \ref{even-shift} is both left and right resolving. 
\end{example}

\begin{definition}\label{def:almost-finite-type}
Sofic shifts that admit a presentation that is both left and right resolving are called of almost finite type. 
\end{definition}

The next theorem is due to Fischer, see \cite[Section 3.3]{lindmarcus}.
\begin{theorem}
Suppose that $X$ is an irreducible sofic shift. Then $X$ admits a unique (up to natural isomorphism) minimal right-resolving presentation. Moreover, a similar statement holds for left-resolving and if $X$ is almost finite type, then it admits a unique minimal presentation that is both right and left resolving.
\end{theorem}

\section{Synchronizing Shifts}\label{sec:sync-words}

As above, $(X,\sigma)$ denotes a shift space.

\begin{definition}\label{def:synchronizing-shift}
Let $X$ be a shift space and $w \in \lang(X)$, then $w$ is called \emph{synchronizing} if, for any two words $u,v \in \lang(X)$, $uw \in \lang(X)$ and $wv \in \lang(X)$ then $uwv \in \lang(X)$.  An element $x \in X$ is called \emph{synchronizing} if it contains a synchronizing word.  A shift space $X$ is called \emph{synchronizing} if it is irreducible and contains at least one synchronizing element (or, equivalently, if $\lang(X)$ contains a synchronizing word).
\end{definition}

The concept of synchronizing for shift spaces can be found in the symbolic dynamics literature in various forms.  In \cite{deeley2022} we have generalized this definition for expansive topological dynamical systems.  The above definition in particular can be found in \cite{fiebig91, blanchard86}.  In the literature synchronizing words have also been called ``intrinsically synchronizing'' \cite{lindmarcus}, ``magic'' \cite{samuel98}, or ``finitary'' \cite{krieger84}.

\begin{example}\label{ex:even-shift-non-sync-pts}
For the even shift (Example \ref{even-shift}), a word is synchronizing if and only if it contains a $\word{1}$.  This is because the context of a word $w$ in the even shift only depends on the parity of the number of zeros on the left or right side of $w$, except in the special case when $w = \word{0}^k$.  In fact, the point of all zeros, $\overline{\word{0}} = \ldots \word{00.000} \ldots$, is the only non-synchronizing point in the even shift.  In particular, the even shift is a synchronizing shift space.  To show it is irreducible, it is easy to show that arbitrary $w,v \in \lang(X)$ can be glued together with $u = \word{0}^k$ ensuring the parity of $k$ produces a valid word in the even shift.
\end{example}

We remark that all irreducible sofic shifts are indeed synchronizing, see \cite[Corollary 3.11]{deeley2022}.

We will now describe the connection between synchronizing element of a shift space in the sense of Definition \ref{def:synchronizing-shift} and the definition of synchronizing given in \cite[Definition 3.2]{deeley2022}.

\begin{definition}\label{def:context}
Let $X$ be a shift space and $w \in \lang(X)$.  The set \[E(w) = \{(a,b) \in \lang(X) \times \lang(X) \mid awb \in \lang(X) \} \] is called the \emph{context} of $w$.  Note that for any $w,v \in \lang(X)$ with $E(w) = E(v)$ and $(a,b) \in E(w)$, we have $E(awb) = E(avb)$.
\end{definition}

Alternatively, we can see that $w$ is synchronizing if it is the case that \[ E\left(w\right) = A \times B \] where $A, B \subseteq \mathcal{L}(X)$ --- this is simply a reformulation of the definition.  Note also that if $w$ is synchronizing then any word containing $w$ is also synchronizing.

Consider $X$ the full $n$-shift.  Define the \emph{local stable and unstable} sets as (respectively)
\begin{align*}
    X^\text{s}(x, N) &= \{ y \in X \mid y_n = x_n \,\forall n \geq N \} \text{ , and } \\
    X^\text{u}(x, N) &= \{ y \in X \mid y_n = x_n \,\forall n \leq -N \} \,.
\end{align*}
The bracket map in \cite[Definition 2.3]{deeley2022}, $[-,-] : X^\text{u}(x, N) \times X^\text{s}(x, N) \to C\left(x_{[-N,N]}\right)$, is then given by
\[ ([y,z])_n = \begin{cases} y_n & n > N \\ x_n = y_n = z_n & |n| \leq N \\ z_n & n < -N \end{cases} \,. \]
The bracket map is always defined when $X$ is the full $n$-shift.  However if we restrict the bracket map to an arbitrary shift space $X$, the bracket map may not be defined in general.  We now state \cite[Definition 3.2]{deeley2022} in the case of shift spaces.
\begin{definition}\label{def:sync-top}
Suppose $X$ is an irreducible shift space.  Then $x \in X$ is called \emph{synchronizing} if there exists $N \geq 0$ such that $[-,-]$ is defined and is a homeomorphism.
\end{definition}

\begin{lemma}\label{lem:same-def}
For shift spaces, Definition \ref{def:sync-top} agrees with Definition \ref{def:synchronizing-shift}.
\end{lemma}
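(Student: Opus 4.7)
The plan is to prove both implications by directly translating between the combinatorial synchronizing property of the word $w = x_{[-N,N]}$ and the analytic properties of the bracket map on the local unstable and stable neighborhoods of $x$. The key observation is that a subword of $[y,z]$ crossing the middle block necessarily has the form $z_{[a,-N-1]}\, x_{[-N,N]}\, y_{[N+1,b]}$, with $z_{[a,N]}\in\lang(X)$ and $y_{[-N,b]}\in\lang(X)$ witnessing that the left and right halves each admit $w$ as a right/left extension. This is exactly the combinatorial setup in which the synchronizing hypothesis on $w$ yields a valid word of $\lang(X)$.

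For the forward direction, assume $x$ contains a synchronizing word; by taking $N$ large enough, $w = x_{[-N,N]}$ is itself synchronizing. I would first verify that the bracket is well-defined by showing $[y,z]\in X$ for any admissible pair $(y,z)\in X^u(x,N)\times X^s(x,N)$ (the middle branch of the bracket formula forces the implicit restriction $y_n=z_n=x_n$ on $|n|\le N$). Finite subwords of $[y,z]$ lying entirely in the past or future are inherited from $z$ or $y$, while subwords crossing the middle are handled by the synchronizing property applied to $u=z_{[a,-N-1]}$, $v=y_{[N+1,b]}$. For surjectivity onto $C(x_{[-N,N]})$, given $p\in C(w)$, define $y$ by pasting $x$'s past onto $p$'s future and $z$ by pasting $p$'s past onto $x$'s future; a symmetric application of synchronizing shows $y,z\in X$, and clearly $[y,z]=p$. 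Injectivity is immediate from the bracket formula, and continuity of both the bracket and its inverse is standard in the product topology.

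For the converse, assume the bracket $X^u(x,N)\times X^s(x,N)\to C(x_{[-N,N]})$ is a homeomorphism, and set $w=x_{[-N,N]}$. Given $u,v\in\lang(X)$ with $uw,wv\in\lang(X)$, use shift-invariance of $X$ to pick points $p^-,p^+\in C(w)$ with $p^-_{[-N-|u|,-N-1]}=u$ and $p^+_{[N+1,N+|v|]}=v$. Writing $p^-=[y^-,z^-]$ and $p^+=[y^+,z^+]$ via the bracket, the cross-bracket $[y^+,z^-]$ lies in $X$, and reading off coordinates from the piecewise definition gives $[y^+,z^-]_{[-N-|u|,N+|v|]}=uwv$, so $uwv\in\lang(X)$. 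Hence $w$ is synchronizing and $x$ is synchronizing in the combinatorial sense.

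The main obstacle is making precise the implicit domain of the bracket map, since the middle branch of its piecewise definition silently restricts to pairs $(y,z)$ whose central blocks already equal $x_{[-N,N]}$; without clarifying this, the map is neither well-defined as a single-valued function nor injective, and the matching of $X$-membership conditions against the synchronizing property becomes muddled. Once this restriction is in place, the argument in both directions reduces to the natural correspondence between a synchronizing word at the center of $x$ and the product (Cartesian) structure it imposes on the cylinder neighborhood $C(x_{[-N,N]})$.
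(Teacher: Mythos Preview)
Your argument is correct and follows the same line as the paper's proof: both directions hinge on identifying the synchronizing property of $w=x_{[-N,N]}$ (i.e., $E(w)$ being a Cartesian product) with the bracket map $X^{\mathrm u}(x,N)\times X^{\mathrm s}(x,N)\to C(w)$ being well-defined and a bijection. The paper merely states this equivalence in two sentences, while you spell out the verification of well-definedness, surjectivity, and the converse via cross-bracketing; your explicit handling of the implicit restriction on the central block is a useful clarification that the paper leaves tacit.
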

\begin{proof}
If a point $x \in X$ is synchronizing in the sense of Definition \ref{def:sync-top}, then there is a cylinder set $C(x_{[-N,N]})$ such that the bracket map is defined as above.  It then follows that $E(x_{[-N,N]})$ is a Cartesian product and hence the word $x_{[-N,N]}$ is synchronizing.

If $X$ contains a synchronizing point in the sense of Definition \ref{def:synchronizing-shift}, then there is an $N$ such that $x_{[-N,N]}$ is a synchronizing word.  This implies that $E(x_{[-N,N]})$ is a Cartesian product and hence the bracket map $[-,-]$ is defined and is indeed a homeomorphism.
\end{proof}

Hence synchronizing shifts are synchronizing dynamical systems in the sense of \cite{deeley2022}.  We will write $X$ as a disjoint union of its synchronizing and non-synchronizing points:
\[ X = X_\text{sync} \sqcup X_\text{non-sync} \]
Note that if $X$ is a synchronizing shift then the $X_\text{non-sync}$ is closed and shift invariant \cite[Lemma 3.5]{deeley2022}.  Hence $X_\text{non-sync}$ is a shift space which is determined by the set of non-synchronizing words (on this last point, see \cite{lindmarcus}).

\section{Local Conjugacy}\label{sec:local-conjgacy}

Local conjugacy is an equivalence relation which relates the local asymptotic behavior of elements in a dynamical system.  Thomsen defined local conjugacy for expansive dynamical systems in \cite{thomsen2010c}.  In \cite{deeley2022} we show that the property of being synchronizing is invariant under local conjugacy \cite[Proposition 4.6]{deeley2022}.  In the following, we will be using the definitions given in \cite[Section 4]{deeley2022}.  We review the definitions here, which are mainly due to Thomsen.  These equivalence relations are stated in terms of the topology on a shift space, but we will show that there are equivalent characterizations in terms of the language of a shift space.

\begin{definition}
Let $X$ be a shift space. We say that two points $x,y \in X$ are \emph{locally conjugate}, denoted $x \sim_\lc y$, if there exist two open neighborhoods $U$ and $V$ of $x$ and $y$ respectively, and a homeomorphism $\gamma : U \to V$ such that $\gamma(x) = y$ and \[ \lim_{n \to \pm \infty} \sup_{z \in U} \, d(\sigma^n(z), \sigma^n(\gamma(z))) = 0 \,. \]
\end{definition}

\begin{definition}
Let $X$ be a shift space with $x,y \in X$.  Suppose $N,N' \geq 0$ and $\gamma : X^\text{u}(x,N) \to X^\text{u}(y,N')$ is a homeomorphism onto its image such that $\gamma(x) = y$ and \[ \lim_{n \to \infty} \sup_{z \in X^\text{u}(x,N)} d(\sigma^n(z), \sigma^n(\gamma(z))) = 0 \,. \]  Then $\gamma$ is a called a \emph{stable local conjugacy} and we say that $x \sim_\lcs y$.
\end{definition}

\begin{definition}
Let $X$ be a shift space with $x,y \in X$.  Suppose $N,N' \geq 0$ and $\gamma : X^\text{s}(x,N) \to X^\text{s}(y,N')$ is a homeomorphism onto its image such that $\gamma(x) = y$ and \[ \lim_{n \to \infty} \sup_{z \in X^\text{s}(x,N)} d(\sigma^{-n}(z), \sigma^{-n}(\gamma(z))) = 0 \,. \]  Then $\gamma$ is a called an \emph{unstable local conjugacy} and we say that $x \sim_\lcu y$.
\end{definition}

Let $X$ be a shift space and $x,y \in X$, then we will also refer to the following equivalence relations.
\begin{itemize}
    \item $x \sim_\text{s} y$ if and only if there exists $N \in \mathbb{Z}$ such that $x_n = y_n$ for all $n \geq N$.
    \item $x \sim_\text{u} y$ if and only if there exists $N \in \mathbb{Z}$ such that $x_n = y_n$ for all $n \leq N$.
    \item $x \sim_\text{h} y$ if and only if there exists $N \geq 0$ such that $x_n = y_n$ for all $n \geq N$ and $n \leq -N$.
\end{itemize}
Note that clearly $x \sim_\lc y$ implies $x \sim_\text{h} y$ for any expansive dynamical system.  However the converse is not true, see Example \ref{ex:h-does-not-imply-lc}.  The following result from \cite{thomsen2010c} gives necessary and sufficient conditions for $x \sim_\lc y$ in a shift space.

\begin{lemma}\label{lem:shift-lc-conditions}
If $(X,\sigma)$ is a shift space, then two points $x,y \in X$ are locally conjugate if and only if there is an $N$ such that $x_n = y_n$ for all $|n| > N$ and $E\left(x_{[-N,N]}\right) = E\left(y_{[-N,N]}\right)$.  If the latter is true, we can construct a local conjugacy $\left(C\left(x_{[-N,N]}\right), C\left(y_{[-N,N]}\right), \gamma\right)$ from $x$ to $y$ where $\gamma$ is defined by \[ \gamma(z)_n = \begin{cases} z_n & |n| > N \\ y_n & |z| \leq N \end{cases} \,. \]
\end{lemma}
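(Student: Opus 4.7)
The plan is to prove the two implications separately: construct the local conjugacy explicitly from the word-level conditions in one direction, and extract those conditions from a given local conjugacy in the other. For the ``if'' direction, given $N$ with $x_n = y_n$ for $|n| > N$ and $E\bigl(x_{[-N,N]}\bigr) = E\bigl(y_{[-N,N]}\bigr)$, I would define $\gamma$ on $C\bigl(x_{[-N,N]}\bigr)$ by the displayed formula. The key check is that $\gamma(z)$ actually lies in $X$: for each $M > N$ the word $\gamma(z)_{[-M,M]}$ factors as $u\, y_{[-N,N]}\, v$, where $(u,v) = \bigl(z_{[-M,-N-1]},\, z_{[N+1,M]}\bigr) \in E\bigl(x_{[-N,N]}\bigr) = E\bigl(y_{[-N,N]}\bigr)$, so it lies in $\lang(X)$, and letting $M \to \infty$ places $\gamma(z)$ in $X$. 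A symmetric formula defines an inverse $C\bigl(y_{[-N,N]}\bigr) \to C\bigl(x_{[-N,N]}\bigr)$; both maps are continuous because they touch only finitely many coordinates, so $\gamma$ is a homeomorphism onto $C\bigl(y_{[-N,N]}\bigr)$ sending $x$ to $y$. The uniform asymptotic condition is immediate: for any $M$ and any $|n| > N + M$, the images $\sigma^n(z)$ and $\sigma^n(\gamma(z))$ agree on the window $[-M, M]$ uniformly in $z$.

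For the ``only if'' direction, suppose $(U, V, \gamma)$ is a local conjugacy with $\gamma(x) = y$. Applying the uniform convergence with any $\varepsilon < \varepsilon_X = 1$ yields some $n_0$ with $d(\sigma^n(z), \sigma^n(\gamma(z))) < 1$ for all $z \in U$ and $|n| > n_0$; since the shift metric satisfies $d(w,w') < 1$ only when $w_0 = w'_0$, this forces $z_n = \gamma(z)_n$ whenever $z \in U$ and $|n| > n_0$, and in particular $x_n = y_n$ for $|n| > n_0$. Because cylinders form a neighborhood basis and both $\gamma$ and $\gamma^{-1}$ are continuous at $x$ and $y$, I would then enlarge $N \geq n_0$ until $C\bigl(x_{[-N,N]}\bigr) \subseteq U$, $C\bigl(y_{[-N,N]}\bigr) \subseteq V$, $\gamma\bigl(C(x_{[-N,N]})\bigr) \subseteq C\bigl(y_{[-N,N]}\bigr)$, and $\gamma^{-1}\bigl(C(y_{[-N,N]})\bigr) \subseteq C\bigl(x_{[-N,N]}\bigr)$ all hold simultaneously.

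With $N$ fixed as above, I would prove the two inclusions making up $E\bigl(x_{[-N,N]}\bigr) = E\bigl(y_{[-N,N]}\bigr)$ by transferring realizers across $\gamma$. Given $(u,v) \in E\bigl(x_{[-N,N]}\bigr)$, pick any $z \in X$ realizing $u\, x_{[-N,N]}\, v$ in the expected positions; then $z \in C\bigl(x_{[-N,N]}\bigr) \subseteq U$, so $\gamma(z) \in C\bigl(y_{[-N,N]}\bigr) \subseteq X$, and since $N \geq n_0$ one has $\gamma(z)_k = z_k$ for $|k| > N$. Reading off the relevant finite subword of $\gamma(z)$ exhibits $u\, y_{[-N,N]}\, v \in \lang(X)$, so $(u,v) \in E\bigl(y_{[-N,N]}\bigr)$; the reverse inclusion is identical using $\gamma^{-1}$. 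The main obstacle I expect is precisely the bookkeeping at the start of this step: reconciling the expansiveness cutoff $n_0$, continuity at both $x$ and $y$, and the four cylinder inclusions into a single cutoff $N$. Once $N$ is pinned down, the language-level transfer of contexts is automatic.
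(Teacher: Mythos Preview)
The paper does not actually prove this lemma; it is stated with attribution to Thomsen \cite{thomsen2010c} and no argument is given. Your proposal is correct and is precisely the standard proof one would expect: build the explicit cylinder-swap map in one direction and, in the other, use the uniform asymptotic condition together with the expansiveness constant $\varepsilon_X = 1$ to force coordinate agreement outside a finite window, then harvest the context equality by pushing realizers through $\gamma$ and $\gamma^{-1}$. The only minor remark is that your bookkeeping in the converse direction (choosing $N \geq n_0$ so that all four cylinder inclusions hold simultaneously) is exactly the right way to handle it, and once that $N$ is fixed the rest is routine; there is no hidden obstacle there.
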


\begin{example}\label{ex:h-does-not-imply-lc}
Here we give a counterexample, due to Thomsen \cite[Remark 1.13]{thomsen2010c}, to the statement $x \sim_\text{h} y$ implies $x \sim_\lc y$.  Consider the even shift as in Example \ref{even-shift}.  Note that since in a shift space $x \sim_\text{h} y$ implies $x_n = y_n$ for all $|n| \geq N$ for some $N$, the sequences $x = \ldots \word{00.000} \ldots$ and $y = \ldots \word{00.100} \ldots$ are homoclinic in the even shift.  However, observe that $(\word{1},\word{0})$ is in $E\left(\word{0}^{4k-1}\right)$ but not in $E\left(\word{0}^{2k-1}\word{10}^{2k-1}\right)$ for all $k \geq 1$, so although $x$ is homoclinic to $y$ they cannot be locally conjugate.  This can also be readily seen from the fact that $y$ is synchronizing but $x$ is not, and any point locally conjugate to $y$ must also be synchronizing.
\end{example}

\subsection{Stable/Unstable Local Conjugacy}\label{sec:shift-lcs-lcu}

We must first introduce a definition similar to Definition \ref{def:context} except we now specialize to either the left or the right side of a word.

\begin{definition}
Let $X$ be a shift space and $w \in \lang(X)$.  Then we define the sets
\begin{align*}
    E^-(w) &= \{ a \in \lang(X) \mid aw \in \lang(X) \} \text{ , and } \\
    E^+(w) &= \{ b \in \lang(X) \mid wb \in \lang(X) \}
\end{align*}
called the \emph{left context} and \emph{right context} of $w$, respectively.
\end{definition}

There is an important caveat to the above definition that we must grapple with.  It is not necessarily the case for every synchronizing shift space that the set of words that can be appended to the left-infinite ray $x_{(-\infty,0]}$, \ie the \emph{follower set} of $x_{(-\infty,0]}$, is the same as $x_{[-N, 0]}$ for any $N \geq 0$.  For example, in the $\word{a}^n\word{b}^n$-shift (see Section \ref{context-free-shift}), one cannot append the word $\word{b}^k\word{a}$ to the right of the left-infinity ray $...\word{aaa}$ for any $k \geq 1$, and yet the words $\word{a}^k$ and $\word{b}^k\word{a}$ can always be concatenated.

However, consider the case where $X$ is a synchronizing shift space and $x \in X_\sync$ with $N$ such that $x_{[-N,N]}$ is a synchronizing word.  Then it is the case that follower set of $x_{(-\infty,N]}$ is the same as $E^+\left(x_{[-N,N]}\right)$.  To see this observe that for any $b \in E^+\left(x_{[-N,N]}\right)$, we have $x_{[-M, -N-1]} x_{[-N,N]} b$ is a valid word in $X$ for any $M > N$ since $x_{[-N,N]}$ is synchronizing.

This motivates the following lemma.  We remind the reader that in Lemma \ref{lem:shift-lcs-lcu-conditions} below we are referring to the stable and unstable local conjugacy relations as defined in \cite[Section 4]{deeley2022}.

\begin{lemma}\label{lem:shift-lcs-lcu-conditions}
Suppose $(X,\sigma)$ is a synchronizing shift space with $x,y \in X_\sync$.  Then the following are true.
\begin{enumerate}[(i)]
    \item $x \sim_\lcs y$ if and only if there exists $N \geq 0$ such that both $x_{[-N,N]}$ and $y_{[-N,N]}$ are synchronizing words, $x_n = y_n$ for all $n > N$, and $E^+\left(x_{[-N,N]}\right) = E^+\left(y_{[-N,N]}\right)$.
    \item $x \sim_\lcu y$ if and only if there exists $N \geq 0$ such that both $x_{[-N,N]}$ and $y_{[-N,N]}$ are synchronizing words, $x_n = y_n$ for all $n < -N$, and $E^-\left(x_{[-N,N]}\right) = E^-\left(y_{[-N,N]}\right)$.
\end{enumerate}
If the latter is true in (i), we can construct an explicit stable local conjugacy $\gamma : X^\text{u}(x, N) \to X^\text{u}(y, N)$ by \[ \gamma(z)_n = \begin{cases} z_n & n > N \\ y_n & n \leq N \end{cases} \] for $z \in X^\text{u}(x, N)$.  Similarly, if the latter is true in (ii), we can construct an explicity unstable local conjugacy $\gamma : X^\text{s}(x, N) \to X^\text{s}(y, N)$ by \[ \gamma(z)_n = \begin{cases} y_n & n \geq -N \\ z_n & n < -N \end{cases} \] for $z \in X^\text{s}(x, N)$.  
\end{lemma}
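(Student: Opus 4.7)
Since parts (i) and (ii) are symmetric under time reversal (applying (i) to the reverse shift yields (ii)), the plan is to prove (i) only. For the reverse direction, I would define $\gamma$ by the explicit formula in the statement and check that it satisfies the definition of a stable local conjugacy. The principal verification is that $\gamma(z) \in X$ for every $z \in X^\text{u}(x,N)$. Every finite window of $\gamma(z)$ is either entirely to the left of $N$ (in which case it is a window of $y \in X$), entirely to the right of $N$ (a window of $z \in X$), or a straddling concatenation $y_{[i,N]}\, z_{[N+1,j]}$. In the straddling case, since $y_{[-N,N]}$ is synchronizing, showing this window lies in $\lang(X)$ reduces to showing $y_{[-N,N]}\, z_{[N+1,j]} \in \lang(X)$; by the hypothesized context equality $E^+(x_{[-N,N]}) = E^+(y_{[-N,N]})$ this is equivalent to $z_{[N+1,j]} \in E^+(x_{[-N,N]})$, which follows from $z$ being an element of the local unstable set $X^\text{u}(x,N)$ extending $x_{[-N,N]}$ to the right. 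The identity $\gamma(x) = y$ follows from $x_n = y_n$ for $n > N$, the asymptotic decay is built directly into the formula, and the inverse is the analogous map with $x$ and $y$ exchanged (well-defined by the same argument).

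For the forward direction, suppose $\gamma : X^\text{u}(x,N_0) \to X^\text{u}(y,N_0')$ is a stable local conjugacy with $\gamma(x) = y$. Using $x, y \in X_\sync$ I would first pass to $N \geq \max(N_0, N_0')$ such that $x_{[-N,N]}$ and $y_{[-N,N]}$ are both synchronizing. The uniform asymptotic estimate $\sup_{z} d(\sigma^n z, \sigma^n \gamma(z)) \to 0$ allows enlarging $N$ so that $\gamma(z)_n = z_n$ for all $n > N$ and all $z$ in the domain; evaluating at $z = x$ yields $x_n = y_n$ for $n > N$. For the inclusion $E^+(x_{[-N,N]}) \subseteq E^+(y_{[-N,N]})$, take $b \in E^+(x_{[-N,N]})$ and use synchronization of $x_{[-N,N]}$ to construct a test point $z \in X^\text{u}(x,N)$ with $z_{[-N,N]} = x_{[-N,N]}$ and $z_{[N+1,N+|b|]} = b$. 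Continuity of $\gamma$ at $x$ (after enlarging $N$ further so the continuity modulus controls the window $[-N,N]$) yields $\gamma(z)_{[-N,N]} = y_{[-N,N]}$, while the uniform asymptotic estimate yields $\gamma(z)_{[N+1,N+|b|]} = b$. Hence $y_{[-N,N]}\, b$ occurs as a subword of $\gamma(z) \in X$, giving $b \in E^+(y_{[-N,N]})$; the reverse inclusion follows by applying the same argument to $\gamma^{-1}$.

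The main obstacle I anticipate is harmonizing, in the forward direction, three distinct sizes of $N$: the one at which the synchronizing windows exist, the one past which the uniform asymptotic estimate forces $\gamma(z)_n = z_n$, and the one determined by the continuity modulus of $\gamma$ at $x$. Each must be enlarged so that a single $N$ satisfies all three simultaneously. This harmonization is possible because each conclusion in the statement is monotone in $N$: extending a synchronizing window keeps it synchronizing, the tail agreement $x_n = y_n$ persists for larger $N$, and the right-context equality, once established for the synchronizing window, remains at any extension compatible with the tail agreement.
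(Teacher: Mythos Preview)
Your proposal is correct and follows essentially the same route as the paper's proof: for the reverse direction you verify the explicit $\gamma$ works, and for the forward direction you enlarge $N$, evaluate the asymptotic estimate at $z=x$ to get tail agreement, and push a test point $z$ carrying $b$ through $\gamma$ to transfer $b$ between right contexts. The one substantive difference is that the paper simply asserts ``$\gamma$ is a homeomorphism from $X^{\mathrm u}(x,N)$ to $X^{\mathrm u}(y,N)$'' and then jumps to $b\in E^+(y_{[-N,N]})$, whereas you explicitly invoke continuity of $\gamma$ at $x$ to pin down $\gamma(z)_{[-N,N]}=y_{[-N,N]}$; your version is in fact more carefully justified, and your closing paragraph correctly identifies and resolves (via monotonicity in $N$) the bookkeeping issue of harmonizing the synchronizing, asymptotic, and continuity thresholds.
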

\begin{proof}
We will prove (i).  It is easy to see that the map $\gamma : X^\text{u}(x, N) \to X^\text{u}(y, N)$ is indeed a stable local conjugacy.  Conversely, if $x \sim_\lcs y$ via a stable local conjugacy $\gamma$, there is an $N$ such that $X^\text{u}(x, N)$ is contained in the domain of $\gamma$, $x_{[-N,N]}$ and $y_{[-N,N]}$ are both synchronizing words, and $z_n = \gamma(z)_n$ for all $n > N$ and $z \in X^\text{u}(x, N)$.  In particular $x_n = y_n$ for all $n > N$.  Note that $\gamma$ is a homeomorphism from $X^\text{u}(x, N)$ to $X^\text{u}(y, N)$.

We now show that $\gamma$ induces a bijection $E^+\left(x_{[-N,N]}\right) = E^+\left(y_{[-N,N]}\right)$.  For any $b \in E^+\left(x_{[-N,N]}\right)$, with $|b| = M$, there is a $z \in X^\text{u}(x, N)$ such that $z_{[N+1, N+M+1)} = b$.  Observe that $\gamma(z)_{[N+1, N+M+1)} = b$ implying $b \in E^+\left(y_{[-N,N]}\right)$, so we have shown $E^+\left(x_{[-N,N]}\right) \subseteq E^+\left(y_{[-N,N]}\right)$.  Since $\gamma$ itself is in particular a bijection, we similarly have $E^+\left(y_{[-N,N]}\right) \subseteq E^+\left(x_{[-N,N]}\right)$, thus proving (i).
\end{proof}

\begin{example}\label{ex:s-does-not-imply-lcs}
Consider the even shift (see Example \ref{even-shift}), and consider the two elements $x = ... \word{111.0000} ...$ and $y = ... \word{111.1000} ...$.  Note that $x \sim_\text{s} y$ and also that both $x$ and $y$ are synchronizing.  However, since $E^+\left(\word{1}^k\word{0}^{k+1}\right) \neq E^+\left(\word{1}^{k+1}\word{0}^k\right)$ for all $k \geq 1$, we cannot have $x \sim_\lcs y$.  Hence $X^\text{s}(x) \neq X^\lcs(x)$ for this particular choice of $x$.
\end{example}

\begin{example}\label{ex:lcu-lcs-sync-existence-counterexample}
Let $X \subseteq \{\word{a}, \word{b}, \word{c}\}^\Z$ be the shift space defined as the closure of the set of bi-infinite paths on the graph $G$ below.  This is a modified version of the $\word{a}^n\word{b}^n$-shift to be discussed in Section \ref{context-free-shift}.
\vspace{1em}
\begin{center}
\begin{tikzpicture}[scale=2]

\tikzset{every loop/.style={looseness=0, in=270, out=180}}
\tikzset{vertex/.style = {shape=circle, draw}}
\tikzset{dots/.style = {}}
\tikzset{edge/.style = {->}}

\node[vertex] (v1) at (0, 1) {};
\node[vertex] (v2) at (1, 1) {};
\node[vertex] (v3) at (2, 1) {};
\node[dots] (v4) at (3, 1) {\(\cdots\)};
\node[vertex] (w1) at (0, 0) {};
\node[vertex] (w2) at (1, 0) {};
\node[vertex] (w3) at (2, 0) {};
\node[dots] (w4) at (3, 0) {\(\cdots\)};

\draw[edge] (v1) to[bend left] node[right]{$\word{b}$} (w1);
\draw[edge] (v1) to node[above]{$\word{a}$} (v2);
\draw[edge] (v2) to node[right]{$\word{b}$} (w2);
\draw[edge] (v2) to node[above]{$\word{a}$} (v3);
\draw[edge] (v3) to node[right]{$\word{b}$} (w3);
\draw[edge] (v3) to node[above]{$\word{a}$} (v4);
\draw[edge] (w1) to[bend left] node[left]{$\word{a}$} (v1);
\draw[edge] (w2) to node[below]{$\word{b}$} (w1);
\draw[edge] (w3) to node[below]{$\word{b}$} (w2);
\draw[edge] (w4) to node[below]{$\word{b}$} (w3);
\draw[->] (w1) edge[loop below] node[below left]{$\word{c}$} (w1);

\end{tikzpicture}
\end{center}

The shift space $X$ is a mixing synchronizing system.  The proof goes as follows.  This shift space is synchronizing since $\word{ba}$ is a synchronizing word, see Lemma \ref{lem:cf-shift-is-sync}.  We use the following definition of mixing for shift spaces: suppose $w,v \in \lang(X)$, then there is an $N$ such that for each $n \geq N$ there is $u$ with $|u| = n$ and $wuv \in \lang(X)$.  Let $v_0$ denote the bottom-left vertex in the graph of $G$ which has an edge labeled $\word{c}$ from $v_0$ to itself.  Consider the words $w,v \in \lang(X)$, and let $p$ and $q$ be finite paths on $G$ such that their labels are $w$ and $v$ respectively.  Let $v_1$ denote the target vertex of $p$ and $v_2$ the initial vertex of $q$.  Observe that the graph $G$ is irreducible, so we can find a path $p'$ from $v_1$ to $v_0$, and a path $q'$ from $v_0$ to $v_2$.  Let $w'$ be the label of $p'$ and $v'$ be the label of $q'$.  Then we can see that $ww'\word{c}^kv'v$ is a word in $\lang(X)$ for any $k \geq 0$, hence $X$ is mixing.

Let $p = \overline{\word{ab}} = \ldots \word{ab.aba} \ldots$, which is a synchronizing periodic point in $X$.  Consider the points
\begin{align*}
    x &= \ldots \word{ababaa.aaaaaa} \ldots \text{ , and } \\ 
    y &= \ldots \word{bbbbbb.bbabab} \ldots
\end{align*}
where $x \in X^\text{u}(p)$ and $y \in X^\text{s}(p)$.  Then in order for $z$ to satisfy $x \sim_\lcs z \sim_\lcu y$, by Lemma \ref{lem:shift-lcs-lcu-conditions} we need there to exist $N \geq 0$ such that
\begin{itemize}
    \item $x_{[-N,N]}$, $y_{[-N,N]}$, and $z_{[-N,N]}$ are synchronizing,
    \item $z_n = x_n$ for all $n > N$,
    \item $z_n = y_n$ for all $n < N$,
    \item $E^+\left(z_{[-N,N]}\right) = E^+\left(x_{[-N,N]}\right)$, and
    \item $E^-\left(z_{[-N,N]}\right) = E^-\left(y_{[-N,N]}\right)$.
\end{itemize}
For $N \geq 2$, $x_{[-N,N]} = \ldots \word{ba}^{N+3}$ and $y_{[-N,N]} = \word{b}^{N+2}\word{a} \ldots$ are synchronizing.  Observe that for any such $N$, we need $z_{[-N,N]}$ to be such that $E^+\left(z_{[-N,N]}\right) = E^+\left(x_{[-N,N]}\right)$ and $E^-\left(z_{[-N,N]}\right) = E^-\left(y_{[-N,N]}\right)$.  Furthermore, we can see from the graph $G$ that $E^+(w) = E^+\left(x_{[-N,N]}\right)$ if and only if $w = w'\word{ba}^{N+2}$ for some other word $w'$.  Likewise, $E^-(w) = E^-\left(y_{[-N,N]}\right)$ if and only if $w = \word{b}^{N+1}\word{a}w'$ for some other word $w'$.  Hence $z_{[-N,N]}$ must have the form \[ z_{[-N,N]} = \word{b}^{N+2}w\word{a}^{N+3} \,. \]  However, this is a contradiction since $|z_{[-N,N]}| = 2N + 1$ but the above conditions say that we need at least $|z_{[-N,N]}| \geq 2N + 5$.  Hence no such $z$ satisfying $x \sim_\lcs z \sim_\lcu y$ can exist.
\end{example}

\subsection{Stable/Unstable Local Conjugacy for Sofic Shifts}

The proof of the next theorem uses results from \cite[Section 3.3]{lindmarcus}. This is the only proof that uses these results, so we will not introduce them in detail. We note that in \cite{lindmarcus} the term ``synchronizing with respect to a presentation" is used. Exercises 3.3.3 and 3.3.4 in \cite{lindmarcus} imply that ``synchronizing with respect to the minimal right-resolving presentation" is the same as ``synchronizing" as used in the present paper.

\begin{theorem} \label{sofic-lemma-resolving}
Suppose that $X$ is an irreducible sofic shift, $(G, \pi)$ is its minimal right-resolving presentation, and $\Sigma_G$ is the shift of finite type associated to the graph $G$. If $a$, $b$ are in $\Sigma_G$ with $\pi(a)$ and $\pi(b)$ synchronizing and $\pi(a) \sim_\lcs \pi(b)$ in $X$, then $a \sim_s b$ in $\Sigma_G$. Moreover, a similar results holds in the case of the minimal left-resolving presentation and $\sim_\lcu$.
\end{theorem} 
\begin{proof}
Write $a$ and $b$ as $(a_n)_{n\in \Z}$ and $(b_n)_{n\in \Z}$ respectively. Also, let $x_n=\pi(a_n)$ and $y_n=\pi(b_n)$ where $\pi$ is the map that takes an edge to its label.

By Lemma \ref{lem:shift-lcs-lcu-conditions}, there exists $N \in \N$ such that 
\begin{enumerate}
\item both $x_{[-N,N]}$ and $y_{[-N,N]}$ are synchronizing words, 
\item $x_n = y_n$ for all $n > N$, and 
\item $E^+\left(x_{[-N,N]}\right) = E^+\left(y_{[-N,N]}\right)$.
\end{enumerate}

By \cite[Lemma 3.3.15]{lindmarcus}, $E^+\left(x_{[-N,N]}\right)=E^+(t(x_N))$ and $E^+\left(y_{[-N,N]}\right)=E^+(t(y_N))$. Hence, by item 3 above, $E^+(t(x_N))=E^+(t(y_N))$. By \cite[Proposition 3.3.9]{lindmarcus}, $(G, \pi)$ is follower-separated, which by definition implies that $t(x_N)=t(y_N)$. Hence $i(x_{N+1})=i(y_{N+1})$. Using item 2 above and the definition of right-resolving, it follows that $a_{N+1}=b_{N+1}$. 

An induction argument then implies that $a_n=b_n$ for all $n > N$ and hence that $a \sim_s b$.

\end{proof}

\section{The Homoclinic and Heteroclinic Algebras of Shift Spaces}\label{sec:shift-algebras}

In \cite{deeley2022} we defined several $C^\ast$ algebras associated to synchronizing systems, based on work by Thomsen \cite{thomsen2010c}.  These are built from the equivalence relations discussed in Section \ref{sec:local-conjgacy}, which may be topologized so as to produce \'{e}tale groupoids.

In the case of a shift space $X$, the homoclinic and heteroclinic algebras of $(X,\sigma)$ are AF-algebras and can be computed from certain Bratteli diagrams.  First we will discuss how one can construct an \'{e}tale groupoid from a Bratteli diagram.  Then we will show how to construct Bratteli diagrams which produce the homoclinic and heteroclinic algebras of a shift space.  Thomsen constructs the homoclinic algebra of a shift space in \cite{thomsen2010c}, and it is implicit that a similar construction is possible for the heteroclinic algebras of a shift space.

Bratteli diagrams were introduced by Brattli in \cite{Bratteli72}.  A Bratteli diagram $B = (V,E)$ is an indexed collection of vertices $V = \{V_n\}_{n \geq N}$ and an indexed collection of edges $E = \{E_n\}_{n \geq N}$.  For our purposes we will index the vertex and edge sets starting at some $N \in \mathbb{Z}$.  In a Bratteli diagram one thinks of the edges in $E_n \in E$ as going from vertices in $V_n$ to vertices in $V_{n+1}$.  For an edge $e \in E_n$, let $i(e) \in V_n$ denote the initial vertex of $e$ and $t(e) \in V_{n+1}$ denote the terminal vertex of $e$.

We will now construct a groupoid $G(B)$ from a Bratteli diagram $B$.  Let \[ B^\infty = \{ (e_N, e_{N+1}, \ldots) \mid t(e_n) = i(e_{n+1}) \text{ for all } n \geq N \} \] be the set of infinite paths in the Bratteli diagram $B$, topologized as subspace of $\displaystyle \prod_{n \geq N} E_n$.  Let $B(e, M) \subseteq B^\infty$ be defined as \[ B(e, M) = \{ f \in B^\infty \mid f_n = e_n \text{ for all } N \leq n \leq M \} \,. \]  The sets $B(e, M)$ are a clopen basis for the topology on $B^\infty$.  Define $G(B)$ to be the set \[ G(B) = \{ (e,e') \in B^\infty \times B^\infty \mid \exists M \geq N \text{ such that } e_n = e'_n \text{ for all } n > M \} \,. \] If $(e,e') \in G(B)$ and $e_n = e'_n$ for all $n > M$ for some $M$, then in particular $t(e_M)$ and $t(e'_M)$ are the same vertex.  Hence for each $(e,e') \in G(B)$ we have, for some $M$, a bijection $\gamma : B(e,M) \to B(e',M)$ defined by
\[ \gamma(f) = (e'_0, e'_1, \ldots, e'_M, f_{M+1}, \ldots) \,. \]
The sets $\Gamma(e, e', M) = \{ (f, \gamma(f)) \mid f \in B(e,M) \}$ are a basis for a topology on $G(B)$.  In fact, with this topology $G(B)$ is an \'{e}tale groupoid --- see \cite[Theorem 3.5.6]{putnamNotes19}.  For $(e,e') \in G(B)$ we define $s(e,e') = e$ and $r(e,e') = e'$, and remark that the unit space $G(B)^0$ is identified with $B^\infty$.

Given a Bratteli diagram $B = (V,E)$, one can produce an AF-algebra $C^\ast(B)$.  Conveniently, we may compute the $K$-theory of this AF-algebra from the Bratteli diagram itself, that is, \[ K_0\big(C^\ast(B)\big) \cong \varinjlim  \left(\Z V_n, A_n \right) \]  where $A_n : Z_n \to Z_{n+1}$ is the $|V_{n+1}|$ by $|V_n|$ matrix with non-negative integer entries encoding the edges between $V_n$ and $V_{n+1}$.  That is \[ A_n(v) = \sum_{e \in i^{-1}(v)} t(e) \] for all $v \in V_n$.  We call $A_n$ the \emph{transition matrices}.

\subsection{Homoclinic Algebra}\label{sec:shift-bratteli-homoclinic}

We denote by $\lang_n(X)$ the words of length $n$ in a shift space $X$, in other words $\lang_n(X) = \{ w \in \lang(X) \mid |w| = n \}$.  Recall that the context of $w \in \lang(X)$, denoted $E(w)$, is given in Definition \ref{def:context} as the set \[ E(w) = \{ (a,b) \in \lang(X) \times \lang(X) \mid awb \in \lang(X) \} \,. \]

\begin{definition}\label{def:shift-context-equivalence}
Let $X$ be a shift space.  Then for each $n \geq 0$ we define an equivalence relation on $\lang_n(X)$ by $w \sim v$ if and only if $E(w) = E(v)$.  Then we say $w$ and $v$ are \emph{context equivalent}.
\end{definition}

We will now construct a Bratteli diagram $B^\lc(X)$, where $X$ is a shift space over the alphabet $\mathcal{A}$.  Let $V_{-1} = \{ * \}$ be a singleton set, and let $V_n = \lang_{2n + 1}(X) /\sim$ where $\sim$ denotes the context equivalence relation as in Definition \ref{def:shift-context-equivalence}.  We define the edge set for $n = -1$ to be $E_{-1} = \{ (*, [a]) \mid a \in \mathcal{A} \}$.  Then for $n \geq 0$ we define the edge sets \[ E_n = \left\{ \big([w], [awb]\big) \,\middle|\, [w] \in V_n,\, a,b \in \mathcal{A} \text{ such that } (a,b) \in E(w) \right\} \,. \]  If $[v] = [w]$ for $w,v \in \lang_{2n+1}(X)$, then $E(w) = E(v)$ by definition.  If $a,b \in \mathcal{A}$ such that $(a,b) \in E(w)$, then $awb, avb \in \lang_{2n+3}(X)$ and furthermore $E(awb) = E(avb)$.  Hence the operation $[w] \mapsto [awb]$ is a well-defined operation on context equivalence classes whenever $(a,b) \in E(w)$.  In short, for each $a,b \in \mathcal{A}$ such that $(a,b) \in E(w)$, we get an edge from $[w] \in V_n$ to $[awb] \in V_{n+1}$ which we label as $(a,b)$.

\begin{theorem}\label{theorem:shift-bratteli-lc-isomorphism}
Suppose $X$ be a shift space.  Then $G\left(B^\lc(X)\right) \cong G^\lc(X,\sigma)$.
\end{theorem}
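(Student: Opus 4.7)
The plan is to construct an explicit isomorphism by defining a natural map $\Phi$ on unit spaces and verifying that it extends to a groupoid isomorphism. Given an infinite path $e = (e_{-1}, e_0, e_1, \ldots) \in B^\lc(X)^\infty$, the initial edge $e_{-1} = (*, [a])$ determines a letter $a \in \mathcal{A}$, and each subsequent edge $e_n$ for $n \geq 0$ carries a label $(a_n, b_n) \in \mathcal{A}^2$ recording the letters appended on the left and right. I would define $\Phi(e) \in \mathcal{A}^\Z$ by $\Phi(e)_0 = a$ and $\Phi(e)_{-(n+1)} = a_n$, $\Phi(e)_{n+1} = b_n$ for $n \geq 0$. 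The edge condition $(a_n, b_n) \in E(w_n)$ for any representative $w_n$ of the current vertex ensures inductively that $\Phi(e)_{[-n, n]} \in \lang(X)$ for all $n$, hence $\Phi(e) \in X$. Conversely, any $x \in X$ lifts to a unique path via the vertices $[x_{[-n, n]}]$ at level $n$ with the obvious edges between them, so $\Phi$ is a bijection. Moreover $\Phi(B(e, M)) = C\bigl(\Phi(e)_{[-(M+1), M+1]}\bigr)$, which shows $\Phi$ is a homeomorphism between unit spaces.

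Next I would verify that $(e, f) \in G(B^\lc(X))$ if and only if $(\Phi(e), \Phi(f)) \in G^\lc(X, \sigma)$. Paths $e$ and $f$ agree from level $M+1$ onward exactly when $x_k = y_k$ for all $|k| > M+1$ (from matching edges) and $[x_{[-(M+1), M+1]}] = [y_{[-(M+1), M+1]}]$ (from having a common vertex at level $M+1$), where $x = \Phi(e)$ and $y = \Phi(f)$. The latter unwinds to $E\bigl(x_{[-(M+1), M+1]}\bigr) = E\bigl(y_{[-(M+1), M+1]}\bigr)$, and by Lemma \ref{lem:shift-lc-conditions} this combined condition is exactly $x \sim_\lc y$. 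Hence $\Phi \times \Phi$ restricts to a bijection $G(B^\lc(X)) \to G^\lc(X, \sigma)$.

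Finally, to promote this set-theoretic bijection to a groupoid isomorphism, I would verify preservation of topology and groupoid operations. The maps $s, r$ and composition are preserved essentially by definition since $\Phi$ is a bijection on the unit space. For the topology, one needs to match the basic open sets $\Gamma(e, e', M)$ of $G(B^\lc(X))$ with basic open sets of $G^\lc(X, \sigma)$. Under $\Phi \times \Phi$, the set $\Gamma(e, e', M)$ is exactly the graph of the local conjugacy described in Lemma \ref{lem:shift-lc-conditions} with swap window $[-(M+1), M+1]$, restricted to the cylinder $C\bigl(\Phi(e)_{[-(M+1), M+1]}\bigr)$; such graphs form a basis for the topology on $G^\lc(X, \sigma)$. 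The main subtlety will be the bookkeeping translating between Bratteli diagram levels (vertices are equivalence classes, not specific words) and the language of $X$, particularly verifying that the common-vertex condition at level $M+1$ is equivalent to equality of contexts; once this is laid out carefully, everything else reduces to Lemma \ref{lem:shift-lc-conditions}.
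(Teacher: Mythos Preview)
Your proposal is correct and follows essentially the same approach as the paper: you build the bijection on unit spaces (the paper calls the inverse map $\rho$), invoke Lemma~\ref{lem:shift-lc-conditions} to identify the tail-equivalence condition on paths with local conjugacy, and then match the basic open sets $\Gamma(e,e',M)$ with graphs of the explicit local conjugacies. The only cosmetic difference is that you define the map from paths to $X$ whereas the paper defines it from $X$ to paths, and your level bookkeeping is slightly more careful than the paper's (which has a harmless off-by-one in its indexing).
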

\begin{proof}
Let $x \in X$.  Then it is straightforward to see from the construction of $B^\lc(X)$ that $x$ corresponds uniquely to the path $\rho(x) = \{ e_n \}_{n \geq -1}$ where $t(e_{-1}) = [x_0]$ and $t(e_n) = [x_{[-n,n]}]$ for all $n \geq 0$.  In particular each $e_n$ is the edge $\big([x_{[-n,n]}], [x_{[-n-1,n+1]}]\big) \in E_n$.  Furthermore, any element $(x_0, (x_{-1}, x_1), (x_{-2}, x_2), \ldots) \in \left(B^\lc(X)\right)^\infty$ corresponds to the sequence $\ldots x_{-2} x_{-1} x_0 x_1 x_2 \ldots \in X$.  We denote this bijection $\rho : X \to \left(B^\lc(X)\right)^\infty$.

From Lemma \ref{lem:shift-lc-conditions}, we know that $x,y \in X$ satisfy $x \sim_\lc y$ if and only if there exists $M \geq 0$ such that $x_n = y_n$ for all $|n| > M$ and $E\left(x_{[-M,M]}\right) = E\left(y_{[-M,M]}\right)$.  This implies that $(\rho(x), \rho(y)) \in G\left(B^\lc(X)\right)$.  Conversely, if $(e,e') \in G\left(B^\lc(X)\right)$, then there is an $M$ such that $e_n = e'_n$ for all $n > M$.  Since $\rho$ is a bijection, $e = \rho(x)$ and $e' = \rho(y)$ for some $x,y \in X$.  Then since $t(e_M) = [x_{[-M,M]}] = [y_{[-M,M]}] = t(e'_M)$, we have $E\left(x_{[-M,M]}\right) = E\left(y_{[-M,M]}\right)$.  Lastly since $e_n = e'_n$ for all $n > M$, it follows that $x_n = y_n$ for all $n > M$, implying $x \sim_\lc y$.  Hence we have a bijection $\Phi : G^\lc(X,\sigma) \to G\left(B^\lc(X)\right)$ defined by $\Phi(x,y) = (\rho(x), \rho(y))$ which clearly preserves the groupoid structure.

Let $\gamma : C\left(x_{[-M,M]}\right) \to C\left(y_{[-M,M]}\right)$ denote a local conjugacy from $x$ to $y$ in $X$, then the set $\left\{ (z, \gamma(z)) \,\middle|\, z \in C\left(x_{[-M,M]}\right) \right\}$ is an open neighborhood of $(x,y)$ in $G^\lc(X,\sigma)$.  In fact, sets of this form are a basis for the topology on $G^\lc(X,\sigma)$ in the case when $X$ is a shift space.  Finally, to see that $\Phi$ is homeomorphism, see that \[ \Phi\left(\left\{ (z, \gamma(z)) \,\middle|\, z \in C\left(x_{[-M,M]}\right) \right\}\right) = \Gamma(\rho(x), \rho(y), M) \,. \] Hence $G\left(B^\lc(X)\right) \cong G^\lc(X,\sigma)$.
\end{proof}

From Definition \ref{def:synchronizing-shift} we can see that if $w \in \lang_{2n+1}(X)$ is synchronizing then the entire equivalence class $[w] \in V_n$ is synchronizing.  Hence we will call $[w] \in V_n$ a \emph{synchronizing vertex}.  Suppose $X$ is a synchronizing shift space and $x \in X_\sync$ is a synchronizing point.  Then for some $n$ the word $x_{[-n,n]}$ is synchronizing, so $[x_{[-n,n]}] \in V_n$ is a synchronizing vertex.  Furthermore if $w$ is synchronizing and $(a,b) \in E(w)$, then $awb$ is also synchronizing.  Thus if a path $e \in \left(B^\lc(X)\right)^\infty$ passes through a synchronizing vertex, then every subsequent vertex in $e$ must also be synchronizing.  Let $\rho : X \to \left(B^\lc(X)\right)^\infty$ be the bijection defined in the proof of Theorem \ref{theorem:shift-bratteli-lc-isomorphism}.  Then the image of $X_\sync$ under $\rho$ is the subset of all paths in $\left(B^\lc(X)\right)^\infty$ which eventually pass through a synchronizing vertex.  Hence, following Theorem \ref{theorem:shift-bratteli-lc-isomorphism}, $G^\lc_\sync(X,\sigma)$ (defined in Theorem \cite[Theorem 5.8]{deeley2022}) is isomorphic to the subgroupoid of all $(e,e') \in G\left(B^\lc(X)\right)$ such that $e,e' \in \rho\left(X_\sync\right)$.

\subsection{Heteroclinic Algebras}\label{sec:shift-bratteli-heteroclinic}

We will now construct a series of Bratteli diagrams which together will give us the groupoid $G^\lcs(X,\sigma,p)$ for a synchronizing shift space.  Let $(X,\sigma)$ be a synchronizing shift space over the alphabet $\mathcal{A}$, and let $p \in X$ be a synchronizing periodic point.  Since $p$ is periodic and synchronizing, then for each $N$ there exists $k = k(N) \geq N$ such that $p_{[-k,-N]}$ is a synchronizing word.  Hence the follower set of $p_{(-\infty,-N]}$ agrees with $p_{[-k,-N]}$, see Section \ref{sec:shift-lcs-lcu}.  We will denote $E^+\parens{p_{[-k,-N]}w}$ as $E^+\parens{p_{(-\infty,-N]}w}$ for any $w \in E^+\parens{p_{[-k,-N]}}$.

We now fix $N \geq 0$ and construct a Bratteli diagram $B^\lcs_N(X,p)$ which will produce all stable local conjugacy equivalence classes of elements in $X^\text{u}(p, N)$.  Recall $X^\text{u}(p,N)$ is defined in Section \ref{sec:shift-lcs-lcu} as the set \[ X^\text{u}(p,N) = \{ x \in X \mid x_n = p_n \text{ for all } n \leq -N \} \,. \]
For simplifying notation moving forward, denote $E^+_n\left(u\right) = \lang_n(X) \cap E^+\left(u\right)$ for $u \in \lang(X)$.  Let $V_{-N} = \{*\}$ be a singleton set.  Then for $n > -N$ let the set of vertices $V_n$ in the Bratteli diagram be \[ V_n = E^+_{n-N}\left(p_{(-\infty,-N]}\right) \big/ \sim \] where $w \sim v$ if and only if $E^+\left(p_{(-\infty,-N]}w\right) = E^+\left(p_{(-\infty,-N]}v\right)$.  We then define the set of edges $E_n$ from $V_n$ to $V_{n+1}$ as \[ E_n = \left\{ ([w], [wa]) \,\middle|\, [w] \in V_n, a \in E^+_1\left(p_{(-\infty,-N]}w\right) \right\} \,. \]
Similar to the homoclinic case, the operation $[w] \mapsto [wa]$ is a well-defined operation on equivalence classes.  In summary, for each $a \in \mathcal{A}$ such that $a \in E^+(w)$, we get an edge from $[w] \in V_n$ to $[wa] \in V_{n+1}$ which we think of as being labeled `a'.

Next note that for each $N$ we have an inclusion $\left(B^\lcs_N(X,p)\right)^\infty \subseteq \left(B^\lcs_{N+1}(X,p)\right)^\infty$ by the map \[ (e_{-N+1}, e_{-N+2}, \ldots) \mapsto (p_N, e_{-N+1}, e_{-N+2}, \ldots) \] where $p_N$ is the edge from the singleton set $V_{-(N+1)}$ to $[p_N] \in V_N$ in the Bratteli diagram $B^\lcs_{N+1}(X,p)$.  Hence, by our definition of the groupoid of a Bratteli diagram, we get an induced inclusion $G\left(B^\lcs_N(X,p)\right) \subseteq G\left(B^\lcs_{N+1}(X,p)\right)$ for each $N \geq 0$.

We omit the construction of $B^\lcu_N(X,p)$ since it is very similar (\ie it is built on the predecessor set of $p_{[N,\infty)}$ for each $N \geq 0$), but we remark that there are identical inclusions of $G\left(B^\lcu_N(X,p)\right)$ into $G\left(B^\lcu_{N+1}(X,p)\right)$.

\begin{theorem}\label{theorem:shift-bratteli-lcu-lcs-isomorphism}
Suppose $X$ is a synchronizing shift space.  Then
\begin{enumerate}[(i)]
    \item $\displaystyle G^\lcs(X,\sigma,p) \cong \bigcup_{N \geq 0} G\left(B^\lcs_N(X,p)\right)$, and
    \item $\displaystyle G^\lcu(X,\sigma,p) \cong \bigcup_{N \geq 0} G\left(B^\lcu_N(X,p)\right)$.
\end{enumerate}
\end{theorem}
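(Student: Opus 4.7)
The plan is to prove (i) by constructing, for each $N \geq 0$, a bijection $\rho_N \colon X^\text{u}(p, N) \to \left(B^\lcs_N(X, p)\right)^\infty$ that intertwines stable local conjugacy on the left with tail equivalence on the right, and then passing to the directed union over $N$. Statement (ii) is symmetric, using $X^\text{s}(p, N)$ and left-follower sets, so I focus on (i). The bijection $\rho_N$ sends $x \in X^\text{u}(p, N)$ to the path whose vertex at level $n$ is $\left[x_{[-N+1, n]}\right]$ and whose edge from level $n$ to $n+1$ is labeled by $x_{n+1}$; this is well-defined because $x_i = p_i$ for $i \leq -N$ places $x_{[-N+1, n]}$ in $E^+\left(p_{(-\infty, -N]}\right)$. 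Conversely a path in $B^\lcs_N(X, p)$ determines its letters $x_{-N+1}, x_{-N+2}, \ldots$, and prepending $p_{(-\infty, -N]}$ recovers a unique element of $X^\text{u}(p, N)$. The inclusions $X^\text{u}(p, N) \subseteq X^\text{u}(p, N+1)$ and $\left(B^\lcs_N(X,p)\right)^\infty \hookrightarrow \left(B^\lcs_{N+1}(X,p)\right)^\infty$ are intertwined by $\rho_N$ and $\rho_{N+1}$ because $x \in X^\text{u}(p, N)$ satisfies $x_{-N} = p_{-N}$.

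The core step is to show that for $x, y \in X^\text{u}(p, N)$, the relation $x \sim_\lcs y$ is equivalent to $(\rho_N(x), \rho_N(y)) \in G\left(B^\lcs_N(X, p)\right)$. By Lemma \ref{lem:shift-lcs-lcu-conditions}(i), $x \sim_\lcs y$ amounts to the existence of $M$ such that $x_{[-M, M]}$ and $y_{[-M, M]}$ are synchronizing, $x_n = y_n$ for $n > M$, and $E^+\left(x_{[-M, M]}\right) = E^+\left(y_{[-M, M]}\right)$. On the Bratteli side, the path condition reads: some $M \geq N$ with $x_n = y_n$ for $n > M$ and $E^+\left(p_{(-\infty, -N]} x_{[-N+1, M]}\right) = E^+\left(p_{(-\infty, -N]} y_{[-N+1, M]}\right)$. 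The bridge, noted at the start of Section \ref{sec:shift-bratteli-heteroclinic}, is that since $p$ is synchronizing periodic, we may enlarge $M$ so that $p_{[-M, -N]}$ is synchronizing; then $x_{[-M, M]} = p_{[-M, -N]} x_{[-N+1, M]}$ is synchronizing (as it contains a synchronizing word), and extending a synchronizing word on the left does not alter its right-follower set, yielding \[ E^+\left(x_{[-M, M]}\right) = E^+\left(p_{(-\infty, -N]} x_{[-N+1, M]}\right), \] with the symmetric identity for $y$. Hence the two conditions are equivalent.

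The main technical obstacle will be promoting $\rho_N$ to a homeomorphism of \'etale groupoids $\Phi_N$ and then verifying these assemble compatibly into a single isomorphism. Following the model of Theorem \ref{theorem:shift-bratteli-lc-isomorphism}, the explicit stable local conjugacy $\gamma$ produced by Lemma \ref{lem:shift-lcs-lcu-conditions}(i) determines a basic neighborhood $\left\{(z, \gamma(z)) \mid z \in X^\text{u}(x, M)\right\}$ of $(x, y)$, and I would check that $\Phi_N$ sends it onto the basic set $\Gamma\left(\rho_N(x), \rho_N(y), M\right)$; since these sets respectively generate the \'etale topologies on the two groupoids, $\Phi_N$ is a homeomorphism. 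Finally, because each $\Phi_N$ is the restriction of $\Phi_{N+1}$ under the inclusions, and because every $(x, y) \in G^\lcs(X, \sigma, p)$ lies in $X^\text{u}(p, N) \times X^\text{u}(p, N)$ for all sufficiently large $N$, taking the directed union of the $\Phi_N$ over $N \geq 0$ yields the desired isomorphism.
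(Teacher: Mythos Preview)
Your proposal is correct and follows essentially the same route as the paper's proof: both define $\rho_N(x)$ by reading off the letters $x_{-N+1}, x_{-N+2}, \ldots$, verify compatibility with the inclusions $X^\text{u}(p,N) \subseteq X^\text{u}(p,N+1)$, invoke Lemma~\ref{lem:shift-lcs-lcu-conditions}(i) to translate $\sim_\lcs$ into the tail-equivalence condition on $B^\lcs_N(X,p)$, and then appeal to the matching of basic open sets (as in the proof of Theorem~\ref{theorem:shift-bratteli-lc-isomorphism}) for the homeomorphism. Your ``bridge'' step---enlarging $M$ until $p_{[-M,-N]}$ is synchronizing and using that prepending to a synchronizing word does not change its right-follower set---is exactly the mechanism the paper uses, though the paper orders things slightly differently (it first fixes $M$ from Lemma~\ref{lem:shift-lcs-lcu-conditions} and then chooses $N>M$, whereas you fix $N$ and enlarge $M$); the two orderings are interchangeable.
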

\begin{proof}
We will show the proof of (i).  First we will show that for each $N \geq 0$ we have a bijection $\rho_N : X^\text{u}(p,N) \to \left(B^\lcs_N(X,p)\right)^\infty$.  For each $x \in X^\text{u}(p,N)$, by definition, $x_n = p_n$ for all $n \leq N$.  Hence it is easy to see from the construction of $B^\lcs_N(X,p)$ that the map $\rho_N$ defined by \[ \rho_N(x) = x_{(-N,\infty)} = (x_{-N+1}, x_{-N+2}, \ldots) \] is a bijection.  

Recall that $\displaystyle X^\text{u}(p) = \bigcup_{N \geq 0} X^\text{u}(p,N)$ by the definition of unstable equivalence on shift spaces.  We will show that $\rho_N$ is compatible with the inclusions $X^\text{u}(p, N) \subseteq X^\text{u}(p, N+1)$ and $\left(B^\lcs_N(X,p)\right)^\infty \subseteq \left(B^\lcs_{N+1}(X,p)\right)^\infty$, and hence the $\rho_N$ lift to a bijection $\displaystyle \rho : X^\text{u}(p) \to \bigcup_{N \geq 0} \left(B^\lcs_N(X,p)\right)^\infty$.  This compatibility is easy to see, as $\rho_{N+1}(x) = (p_{-N}, x_{-N+1}, x_{-N+2}, \ldots)$ for $x \in X^\text{u}(p, N) \subseteq X^\text{u}(p, N+1)$.  Similarly the inclusion of $\rho_N(x) \in \left(B^\lcs_N(X,p)\right)^\infty$ into $\left(B^\lcs_{N+1}(X,p)\right)^\infty$ is given by $\rho_N(x) = (x_{-N+1}, x_{-N+2}, \ldots) \mapsto (p_{-N}, x_{-N+1}, x_{-N+2}, \ldots)$.

If $x,y \in X^\text{u}(p)$ and $x \sim_\lcs y$, then by Lemma \ref{lem:shift-lcs-lcu-conditions} there exists $M \geq 0$ such that $x_{[-M,M]}$ and $y_{[-M,M]}$ are both synchronizing words, $x_n = y_n$ for all $n > M$, and $E^+\left(x_{[-M,M]}\right) = E^+\left(y_{[-M,M]}\right)$.  Furthermore, since $x,y \in X^\text{u}(p)$, there exists $N > M$ such that $x,y \in X^\text{u}(p,N)$.  Note that since $x_{[-M,M]}$ and $y_{[-M,M]}$ are both synchronizing words, we also have $E^+\left(p_{(-\infty,-N]}x_{(-N,M]}\right) = E^+\left(p_{(-\infty,-N]}y_{(-N,M]}\right)$.  In total, this means that the paths $\rho_N(x) = x_{(-N,\infty)}$ and $\rho_N(y) = y_{(-N,\infty)}$ satisfy $\rho_N(x)_n = \rho_N(y)_n$ for all $n > M$, hence $(\rho_N(x), \rho_N(y)) \in G\left(B^\lcs_N(X,p)\right)$.

Conversely, given $(e,e') \in G\left(B^\lcs_N(X,p)\right)$ for some $N \geq 0$, it is clear from the construction of $B^\lcs_N(X,p)$ that the points $\rho_N^{-1}(e)$ and $\rho_N^{-1}(e')$ are stably locally conjugate in $X^u(p,N)$.  Let $G^\lcs(X,\sigma,p)|_{X^\text{u}(p,N)}$ denote the set \[ G^\lcs(X,\sigma,p)|_{X^\text{u}(p,N)} = \{ (x,y) \in G^\lcs(X,\sigma,p) \mid x,y \in X^\text{u}(p, N) \} \,. \]  Then for each $N \geq 0$ we shown there is a bijection
\[ G^\lcs(X,\sigma,p)|_{X^\text{u}(p,N)} \mapsto G\left(B^\lcs_N(X,p)\right) \] via the map $(x,y) \mapsto (\rho_N(x), \rho_N(y))$.  This lifts to a bijection $\displaystyle \Phi : G^\lcs(X,\sigma,p) \to \bigcup_{N \geq 0} G\left(B^\lcs_N(X,p)\right)$ defined by $\Phi(x,y) = (\rho(x), \rho(y))$.  Similar to the proof of \ref{theorem:shift-bratteli-lc-isomorphism}, the map $\Phi$ is a homeomorphism, hence $\displaystyle G^\lcs(X,\sigma,p) \cong \bigcup_{N \geq 0} G\left(B^\lcs_N(X,p)\right)$.
\end{proof}

Lastly, we will show that in some cases it is sufficient (up to Morita equivalence) to compute $B^\lcs_0(X,p)$.  In particular it is the case with mixing sofic shifts, as shown in the following theorem.

\begin{theorem}\label{theorem:shift-bratteli-lcu-lcs-morita-equivalence}
Let $X$ be a mixing sofic shift.  Then,
\begin{enumerate}
    \item $G\left(B^\lcs_0(X,p)\right)$ and $G^\lcs(X,\sigma,p)$ are Morita equivalent, and
    \item $G\left(B^\lcu_0(X,p)\right)$ and $G^\lcu(X,\sigma,p)$ are Morita equivalent.
\end{enumerate}
\end{theorem}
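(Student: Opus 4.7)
The plan is to show that $X^u(p, 0)$ is an open subset of the unit space $X^u(p)$ of $G^\lcs(X, \sigma, p)$ that meets every $\sim_\lcs$-equivalence class. From this, the Morita equivalence in (i) follows via the standard result for \'etale groupoids: the restriction of a groupoid to an open subset of its unit space which intersects every orbit is Morita equivalent to the full groupoid. The identification $G\!\left(B^\lcs_0(X, p)\right) \cong G^\lcs(X, \sigma, p)|_{X^u(p, 0)}$ is immediate from the bijection $\rho_0$ of Theorem \ref{theorem:shift-bratteli-lcu-lcs-isomorphism}, and $X^u(p, 0)$ is a cylinder set, hence clopen in $X$ and open in $X^u(p)$.

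To produce a point $y \in X^u(p, 0)$ with $y \sim_\lcs x$ for an arbitrary $x \in X^u(p, N)$, I will work in the Fischer cover $(G, \pi)$ of $X$: the minimal right-resolving presentation, whose underlying graph is primitive for a mixing sofic shift. A standard fact from symbolic dynamics is that every synchronizing word of $X$ lifts to a unique terminal vertex in $G$, and its right context equals the set of labels of finite outgoing paths from that vertex. Choose $M > N$ large enough that both $p_{[-M, -N]}$ and $p_{[-M, 0]}$ are synchronizing in $X$, so that $x_{[-M, M]}$ is synchronizing as well, and let $v_p$ and $v_x$ denote the unique terminal vertices of the lifts of $p_{[-M, 0]}$ and $x_{[-M, M]}$ respectively. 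By primitivity of $G$, for $M$ above some threshold there is a path in $G$ of length exactly $M$ from $v_p$ to $v_x$; let $u \in \lang_M(X)$ be its label. Define $y \in \mathcal{A}^\Z$ by setting $y_n = p_n$ for $n \leq 0$, $y_n = u_n$ for $0 < n \leq M$, and $y_n = x_n$ for $n > M$. Then $y \in X$ because the concatenation of the lift of $p_{(-\infty, 0]}$, the chosen path labeled $u$, and the lift of $x_{(M, \infty)}$ assembles into a valid bi-infinite path in $G$ (meeting $v_x$ at time $M$ by construction).

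It is then routine to verify $y \sim_\lcs x$ using Lemma \ref{lem:shift-lcs-lcu-conditions}: by construction $y \in X^u(p, 0)$ and $y_n = x_n$ for $n > M$; the words $y_{[-M, M]} = p_{[-M, 0]} u$ and $x_{[-M, M]}$ are both synchronizing; and they share the terminal vertex $v_x$ in the Fischer cover, so they have a common right context. For the unstable case (ii), the argument is symmetric: apply the same reasoning to the reverse shift $(X, \sigma^{-1})$, using the minimal left-resolving presentation of $X$, to glue $p_{[0, \infty)}$ onto the right of $x \in X^s(p, N)$. The main obstacle will be invoking the correct structural fact about synchronizing words and terminal vertices in the Fischer cover of an irreducible sofic shift --- a standard but nontrivial piece of input from symbolic dynamics that underlies the entire gluing argument.
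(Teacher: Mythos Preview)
Your proposal is correct and follows the same overall strategy as the paper: identify $G\!\left(B^\lcs_0(X,p)\right)$ with the restriction $G^\lcs(X,\sigma,p)|_{X^\text{u}(p,0)}$ via $\rho_0$, then show $X^\text{u}(p,0)$ is a full (clo)open subset of the unit space, yielding Morita equivalence through the standard Kakutani-type criterion. The paper invokes exactly this framework, citing \cite{farsi2018ample} for the Kakutani equivalence and then dispatching the fullness of $X^\text{u}(p,0)$ in one line by appealing to \cite[Lemma 6.7]{deeley2022}, which handles the general finitely presented case.

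Where you differ is in proving fullness: instead of citing the companion paper, you give a direct, self-contained construction inside the Fischer cover, using primitivity to produce a bridging word $u$ of the correct length and the fact that synchronizing words focus to a single terminal vertex to match right contexts. This buys you an explicit, symbolic-dynamics-level argument specific to sofic shifts, whereas the paper's proof trades that for brevity and generality (the cited lemma applies to all mixing finitely presented systems). One small point of presentation: your choice of $M$ is slightly entangled with the vertex $v_x$, which itself depends on $M$. This is harmless because the primitivity threshold of the Fischer cover is a fixed constant independent of the particular vertices, so you may simply take $M$ large enough to exceed that threshold and to make $p_{[-M,0]}$ synchronizing; then $v_p$ and $v_x$ are well-defined and the length-$M$ path exists. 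With that clarification your argument goes through cleanly.
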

\begin{proof}
We will show the proof in the stable case.  We use \cite[Definition 3.8]{farsi2018ample} and \cite[Theorem 3.12]{farsi2018ample}, and show that $G\left(B^\lcs_0(X,p)\right)$ and $G^\lcs(X,\sigma,p)$ are Kakutani equivalent, and hence Morita equivalent.  Observe from the proof of Theorem \ref{theorem:shift-bratteli-lcu-lcs-isomorphism} that the map $(x,y) \mapsto (\rho_0(x), \rho_0(y))$ gives an isomorphism $G^\lcs(X,\sigma,p)|_{X^\text{u}(p,0)} \to G\left(B^\lcs_0(X,p)\right)$, where $G^\lcs(X,\sigma,p)|_{X^\text{u}(p,0)}$ is defined as \[ G^\lcs(X,\sigma,p)|_{X^\text{u}(p,0)} = \{ \gamma \in G^\lcs(X,\sigma,p) \mid r(\gamma), s(\gamma) \in X^\text{u}(p, 0) \} \,. \]  Furthermore Note that $X^\text{u}(p,0)$ and $\left(B^\lcs_0(X,p)\right)^\infty$ are both clopen in the topology on the unit spaces of $G^\lcs(X,\sigma,p)$ and $G\left(B^\lcs_0(X,p)\right)$ respectively.  Lastly, we must now show that that $X^\text{u}(p,0) \subset X^\text{u}(p)$ is $G^\lcs(X,\sigma,p)$-full (see \cite{matui12}), that is to say for all $y \in X^\text{u}(p)$ there is some $x \in X^\text{u}(p, 0)$ such that $x \sim_\lcs y$.  This follows from Lemma \cite[Lemma 6.7]{deeley2022} since sofic shifts are finitely presented.
\end{proof}

\begin{definition}
In summary, given a synchronizing shift $X$ and a fixed synchronizing periodic point $p \in X$, we define the following $C^\ast$-algebras as in \cite{deeley2022}.

\begin{enumerate}[(i)]
    \item The \emph{homoclinic algebra} of $X$ is $A(X,\sigma) = C^\ast(G^\lc(X,\sigma))$.
    \item The \emph{stable synchronizing algebra} of $X$ is $S(X,\sigma,p) = C^\ast(G^\lcs(X,\sigma,p))$.
    \item The \emph{unstable synchronizing algebra} of $X$ is $U(X,\sigma,p) = C^\ast(G^\lcu(X,\sigma,p))$.
    \item The \emph{synchronizing ideal} of $X$ is \[ \I_\sync(X,\sigma) = C^\ast\left(G^\lc_\sync(X,\sigma)\right) = C^\ast\left(G^\lc(X,\sigma)|_{X_\sync}\right) \subseteq A(X,\sigma) \,. \]
\end{enumerate}

The stable and unstable synchronizing algebras are collectively called the \emph{heteroclinic synchronizing algebras} of $X$. We will sometime drop the shift map from the notation of these algebras. Also, it is worth noting that for shift spaces each of the relevant groupoids is AF and hence amenable, so the full and reduced $C^\ast$-algebras are natural isomorphic. It is not known if this result generalizes to all synchronizing systems. 
\end{definition}
Note that in general the heteroclinic synchronizing algebras are related to but not the same as, or even Morita equivalent to, the \emph{heteroclinic algebras} defined by Thomsen in \cite{thomsen2010c}.  This is due to the fact that Thomsen builds the heteroclinic algebras out of the set of all periodic points, while we are restricting to a single synchronizing periodic point.  By \cite[Theorem 5.5]{deeley2022}, the choice of synchronizing periodic point does not matter (up to Morita equivalence).

The next theorem relates the heteroclinic synchronizing algebras of an irreducible sofic shift with those of certain shifts of finite type. Since the $C^\ast$-algebras associated to a shift of finite type are well-understood, this allows us to get information about the algebras associated to a sofic shift. The proof uses results from \cite{deeley2022}.

\begin{theorem} \label{sofic-min-right-alg}
Suppose $X$ is a mixing sofic shift, $(G, \pi)$ is the minimal right-resolving presentation of $X$, and $\Sigma_G$ is the associated shift of finite type. Then there is exists synchronizing periodic point $p\in X$ such that $\pi^{-1}(p) = \{ q \}$ for some $q\in \Sigma_G$ and the map $\pi \times \pi : G^s(\Sigma_G, q) \rightarrow G^\lcs(X, p)$ is an isomorphism of groupoids. In particular, $C^\ast(G^s(\Sigma_G, q)) \cong  C^\ast(G^\lcs(X, p))$. A similar result holds for $G^\lcu(X, p)$ and the minimal left-resolving presentation of $X$.
\end{theorem}
\begin{proof}
For the first part, by \cite[Proposition 9.1.6]{lindmarcus}, the map $\pi : \Sigma_G \rightarrow X$ is almost one-to-one, so by \cite[Lemma 6.4]{deeley2022} there exists $p$ as required.

For the second part, by \cite[Lemma 6.8]{deeley2022}, $\pi \times \pi$ is a continuous open inclusion. Thus, we need only show it is onto. 

Let $(x, y) \in G^\lcs(X, p)$. By \cite[Lemma 6.5]{deeley2022}, the restriction to the unit space is a homeomorphism and so there exists $a$ and $b$ in $X^u(q)$ such that $\pi(a)=x$ and $\pi(b)=y$. By \cite[Lemma 4.2]{deeley2022}, $x$ and $y$ are synchronizing and by assumption $x\sim_\lcs y$. Hence, using Theorem \ref{sofic-lemma-resolving}, it follows that $a \sim_s b$. That is, $(a, b)\in G^s(\Sigma_G, q)$ and $(\pi \times \pi) (a, b)= (x,y)$ as required.
\end{proof}

\begin{remark}
The reader might wonder if the previous theorem generalizes to the case of finitely presented systems. This is indeed the case, but the proof is much more involved. It will appear (with other results) in \cite{DeeStoFinPre}
\end{remark}

\subsection{Short Exact Sequence, Morita Equivalence, and Structure}\label{sec:ses}

Suppose that $(X,\sigma)$ is a synchronizing shift space.  In \cite[Theorem 5.8]{deeley2022} we have shown there is a short exact sequence determined by the synchronizing ideal.  Since the above groupoids are all AF-groupoids, and hence amenable, this short exact sequence has the form
\[ 0 \longrightarrow \I_\sync(X, \sigma) \longrightarrow A(X,\sigma) \longrightarrow C^\ast\left(G^\lc_\text{non-sync}(X,\sigma)\right) \longrightarrow 0 \]
where $G^\lc_\text{non-sync}(X,\sigma) = G^\lc(X,\sigma)|_{X_\text{non-sync}}$.  In the case when $X$ is a mixing sofic shift, we also have that $\I_\sync(X,\sigma)$ is Morita equivalent to $S(X,\sigma,p) \otimes U(X,\sigma,p)$ for any choice of synchronizing periodic point $p \in X$, see \cite[Theorem 6.10]{deeley2022}. For a mixing sofic shift with finitely many non-synchronizing points, these results and Theorem \ref{sofic-min-right-alg} lead to a complete description of the $K$-theories of the relevant algebras:

\begin{corollary} \label{K-theory-mixing-sofic}
Suppose that $X$ is a mixing sofic shift, $(G_r, \pi_r)$ is its minimal right-resolving presentation, and $(G_l, \pi_l)$ is its minimal left-resolving presentation. Denote by $\Sigma_r$ and $\Sigma_l$ the shift of finite type associated to $G_r$ and $G_l$ respectively and fix synchronizing periodic points $p_r$ and $p_l$ in $X$ such that $\pi_r^{-1}(p_r)=\{ q_r \}$ and $\pi_l^{-1}(p_l)=\{ q_l \}$. Then 
\begin{enumerate}
\item the $K_1$-group of each algebra is trivial.
\item $K_0(S(X,p)) \cong K_0(S(\Sigma_r, q_r)) \cong $ the stationary inductive limit associated to $A_{G_r}$.
\item $K_0(U(X,p)) \cong K_0(U(\Sigma_l, q_l)) \cong $ the stationary inductive limit associated to $A_{G_l}$.
\item $K_0(\I_\sync(X))\cong K_0(S(X,p)) \otimes K_0(U(X,p))$.
\end{enumerate}
If moreover the set of non-synchronizing points of $X$ is finite, then 
\[ K_0(A(X)) \cong (K_0(S(X,p)) \otimes K_0(U(x,p))) \oplus \Z^n \] 
where $n$ is the number of non-synchronizing points.
\end{corollary}
\begin{proof}
The first item follows since all the relevant $C^\ast$-algebras are AF. The second and third follow from Theorem \ref{sofic-min-right-alg}; we note that different choices of synchronizing periodic points lead to Morita equivalence $C^\ast$-algebras, so the $K$-theory is unaffected. The fourth item follows from the Morita equivalence between $\I_\sync(X)$ and $S(X,p) \otimes U(x,p)$, the fact that AF-algebras have torsion-free $K$-theory, and the Kunneth formula. For the final part, we have short exact sequence 
\[ 0 \longrightarrow \I_\sync(X, \sigma) \longrightarrow A(X,\sigma) \longrightarrow \C^n \longrightarrow 0 \]
see \cite[Theorem 6.11]{deeley2022}. The result then follows from the previous parts and the six-term exact sequence in $K$-theory, which reduced to a split short exact sequence in this particular situation.
\end{proof}

\begin{corollary} \label{almost-finite-type-algebras}
Suppose that $X$ is a mixing sofic shift that is of almost finite type, $(G, \pi)$ is its minimal right and left resolving presentation, and $p$ is a periodic point of $X$ such that $\pi^{-1}(p)=\{ q \}$ for some $q\in \Sigma_G$. Then $S(X,p) \cong S(\Sigma, q)$, $U(X, p) \cong S(\Sigma, q)$, and $\I_\sync(X) \sim_{ME} A(\Sigma_G)$.
\end{corollary}
\begin{proof}
This follows from Theorem \ref{sofic-min-right-alg} and 
\[
\I_\sync(X) \sim_{ME} S(X, p)\otimes U(X, p) \cong S(\Sigma_G, q) \otimes U(\Sigma_G, q) \sim_{ME} A(\Sigma_G)
\]
\end{proof}

\section{The Even Shift}\label{sec:even-shift}

Recall from Example \ref{even-shift} that the even shift is a shift space in the alphabet $\{\word{0}, \word{1}\}$ whose elements don't contain any of the words in the set $F = \{\word{10}^{2k+1}\word{1} \mid k \geq 0\}$.  That is to say that the elements of the even shift are infinite binary sequences which have an even number of consecutive zeros between any two ones.  We will denote the even shift as $X_\ev$.

The even shift is a sofic shift of almost finite type with only one non-synchronizing point. Hence, we can apply Corollaries \ref{K-theory-mixing-sofic} and \ref{almost-finite-type-algebras} to obtain essentially the entire structure of the relevant algebras. However, we include the full details of the construction of the relevant Bratteli diagrams because it is illustrative and also there is a small mistake in the Bratteli diagram of the homoclinic algebra of the even shift given in \cite{thomsen2010c} and it seems appropriate to correct this.

\subsection{Homoclinic Algebra}

We will now construct the Bratteli diagram $B^\lc(X_\ev)$ as in Section \ref{sec:shift-bratteli-homoclinic}.  To this end, we must describe the context equivalence classes of words in $\lang(X_\ev)$.  If we assume that $n \geq 1$ and $w \neq \word{0}^{2n+1}$, then $w = \word{0}^k\word{1} \dots \word{1}\word{0}^l$ for some $k, l \geq 0$.  It is always the case in the even shift that the zero word of any length can be appended to either side of any word $w$, so we consider pairs $(a,b) \in \lang(X_\ev) \times \lang(X_\ev)$ of the form \[a=...\word{1}\word{0}^r \qquad b=\word{0}^s\word{1}...\]  In this case, $awb$ will be a word in the even shift if and only if $r+k$ and $l+s$ are both even.  Hence if $w$ and $v$ are both non-zero words of length $2n+1$ we have $E(w) = E(v)$ if and only if the parity of zeros on each side of $w$ and $v$ both match.  In particular, a word $w \in \lang(X_\ev)$ is synchronizing if and only if $w$ contains a $\word{1}$.  In the case of the word $\word{0}^{2n+1} \in \lang_{2n+1}(X_\ev)$, we can see that both $(\word{1},\word{0})$ and $(\word{0},\word{1})$ are in $E(\word{0}^{2n+1})$ but $(\word{1},\word{1})$ is not, which cannot be the case for any $w \neq \word{0}^{2n+1}$.  This means $E(w) \neq E(\word{0}^{2n+1})$ whenever $w \neq \word{0}^{2n+1}$.  Lastly, when $n=0$, $\lang_1(X_\ev) = \{\word{0},\word{1}\}$ and $E(\word{0}) \neq E(\word{1})$ so the words $\word{0}$ and $\word{1}$ each form their own equivalence class.

In summary, for $n = 0$ we have two context equivalence classes, and hence $V_0$ has two vertices in the Brattli diagram $B^\lc(X_\ev)$.  For $n \geq 1$, there are five context equivalence classes and so five vertices in $V_n$.  We then get edges from $V_n$ to $V_{n+1}$ as described in Section \ref{sec:shift-bratteli-homoclinic}.  For example there is an edge from $[\word{1}] \in V_0$ to $[\word{111}] = \{\word{111},\word{100},\word{001}\} \in V_1$ coming from the pair $(\word{1},\word{1}) \in E(\word{1})$.  For another example, there are two edges from $[\word{0}] \in V_0$ to $[\word{111}] \in V_1$ coming from the pairs $(\word{1},\word{0}), (\word{0},\word{1}) \in E(\word{0})$.  This situation is depicted in Figure \ref{fig:even-shift-lc-bratteli}.  The AF-algebra corresponding to $B^\lc(X_\ev)$ is isomorphic to $A(X_\ev,\sigma)$ by Theorem \ref{theorem:shift-bratteli-lc-isomorphism}.  We note that there is a slight discrepancy between the Bratteli diagram in Figure \ref{fig:even-shift-lc-bratteli} and the Bratteli diagram in \cite[Example 3.5]{thomsen2010c}, in particular the edges appear to be incorrect in Thomsen's diagram.

\begin{figure}
\centering

\begin{tikzpicture}[scale=0.85]
\pgfmathsetmacro{\W}{3};
\pgfmathsetmacro{\H}{5};
\pgfmathsetmacro{\D}{1pt};

\node[shape=circle, draw=black] (S) at (0,-1.25*\H) {};

\node[shape=rectangle, draw=black] (V00) at (-2*\W,-1.5*\H) {$\word{1}$};
\node[shape=rectangle, draw=black] (V01) at (2*\W,-1.5*\H) {$\word{0}$};
    
    \path (S) edge (V00);
    \path (S) edge (V01);

\node[shape=rectangle, draw=black, align=left] (V10) at (-2*\W,-2*\H) {$\word{111},\word{100},\word{001}$};
\node[shape=rectangle, draw=black] (V11) at (-1*\W,-2*\H) {$\word{110}$};
\node[shape=rectangle, draw=black] (V12) at (0,-2*\H) {$\word{011}$};
\node[shape=rectangle, draw=black] (V13) at (1*\W,-2*\H) {$\word{010}$};
\node[shape=rectangle, draw=black] (V14) at (2*\W,-2*\H) {$\word{000}$};

    \path (V00) edge (V10);
    \path (V00) edge (V11);
    \path (V00) edge (V12);
    \path (V00) edge (V13);
    
    \path (V01) edge[style={double,double distance=\D}] (V10);
    \path (V01) edge (V14);

\node[shape=rectangle, draw=black, align=left] (V20) at (-2*\W,-3*\H) {$\word{11111},\word{11001},$\\$\word{10011},\word{11100},$\\$\word{00111},\word{00100},$\\$\word{10000},\word{00001}$};
\node[shape=rectangle, draw=black, align=left] (V21) at (-1*\W,-3*\H) {$\word{11110},\word{11000},$\\$\word{10010},\word{00110}$};
\node[shape=rectangle, draw=black, align=left] (V22) at (0,-3*\H) {$\word{01111},\word{01001},$\\$\word{00011},\word{01100}$};
\node[shape=rectangle, draw=black, align=center] (V23) at (1*\W,-3*\H) {$\word{01110},\word{01000},$\\$\word{00010}$};
\node[shape=rectangle, draw=black] (V24) at (2*\W,-3*\H) {$\word{00000}$};
    
    \path (V10) edge (V20);
    \path (V10) edge (V21);
    \path (V10) edge (V22);
    \path (V10) edge (V23);
    
    \path (V11) edge (V20);
    \path (V11) edge (V22);
    
    \path (V12) edge (V20);
    \path (V12) edge (V21);
    
    \path (V13) edge (V20);
    
    \path (V14) edge[style={double,double distance=\D}] (V20);
    \path (V14) edge (V24);

\node[shape=rectangle, draw=black] (V30) at (-2*\W,-4*\H) {$\word{1111111},\dots$};
\node[shape=rectangle, draw=black] (V31) at (-1*\W,-4*\H) {$\word{1111110}, \dots$};
\node[shape=rectangle, draw=black] (V32) at (0,-4*\H) {$\word{0111111}, \dots$};
\node[shape=rectangle, draw=black] (V33) at (1*\W,-4*\H) {$\word{0111110}, \dots$};
\node[shape=rectangle, draw=black] (V34) at (2*\W,-4*\H) {$\word{0000000}$};
    
    \path (V20) edge (V30);
    \path (V20) edge (V31);
    \path (V20) edge (V32);
    \path (V20) edge (V33);
    
    \path (V21) edge (V30);
    \path (V21) edge (V32);
    
    \path (V22) edge (V30);
    \path (V22) edge (V31);
    
    \path (V23) edge (V30);
    
    \path (V24) edge[style={double, double distance=\D}] (V30);
    \path (V24) edge (V34);
    
\node (E0) at (-2*\W,-4.2*\H) {$\vdots$};
\node (E1) at (-1*\W,-4.2*\H) {$\vdots$};
\node (E2) at (0,-4.2*\H) {$\vdots$};
\node (E3) at (1*\W,-4.2*\H) {$\vdots$};
\node (E4) at (2*\W,-4.2*\H) {$\vdots$};
\end{tikzpicture}

\caption{The Bratteli diagram $B^\lc(X_\ev)$ for the even shift.  Paths in this Bratteli diagram indicate how to inductively build local conjugacy equivalence classes in the even shift.}
\label{fig:even-shift-lc-bratteli}
\end{figure}

As described in Section \ref{sec:shift-algebras}, we can compute the $K$-theory of $A(X_\ev,\sigma)$ directly from the Brattli diagram itself, \[ K_0\big(A(X_\ev,\sigma)\big) \cong \varinjlim  \left(\Z V_n, A_n \right) \] where $A_n$ are the transition matrices encoding the edge relation in $B^\lc(X_\ev)$.  In this case, $|V_n| = 5$ and $A_n$ is the matrix \[ A = \begin{bmatrix} 1 & 1 & 1 & 1 & 2 \\ 1 & 0 & 1 & 0 & 0 \\ 1 & 1 & 0 & 0 & 0 \\ 1 & 0 & 0 & 0 & 0 \\ 0 & 0 & 0 & 0 & 1 \end{bmatrix} \] for all $n \geq 1$.  Hence \[ K_0\big(A(X_\ev,\sigma)\big) \cong \varinjlim \left( \Z^5, A \right) \] Note that the direct limit can be computed as $\varinjlim \left( \Z^5, A \right) \cong \big( \Z^5 \times \N \big) / \sim$ where, for $n \leq k$, $(v,n) \sim (w,k)$ if and only if $vA^{k-n+l} = wA^l$ for some $l \geq 0$.

\subsection{Heteroclinic Synchronizing Algebras and Synchronizing Ideal}

Notice that in the computation of $K_0\big(A(X_\ev,\sigma)\big)$, if we restrict to the synchronizing vertices we get the transition matrix \[ A_\sync = \begin{bmatrix} 1 & 1 & 1 & 1 \\ 1 & 0 & 1 & 0 \\ 1 & 1 & 0 & 0 \\ 1 & 0 & 0 & 0 \end{bmatrix} \,. \]  Consequently, we obtain \[ K_0 \big( \I_\sync(X_\ev, \sigma) \big) \cong \varinjlim \left( \Z^4, A_\sync \right) \,. \]

Next we will compute the Bratteli diagrams $B^\lcs_0(X_\ev,p)$ and $B^\lcu_0(X_\ev,p)$ where $p = \overline{\word{1}}$ is the sequence $\overline{\word{1}} = \ldots \word{11.111} \ldots$, which is a synchronizing fixed point in the even shift.  Observe that the follower set of $p_{(-\infty,0]} = \ldots \word{111}$ is the same as $E^+(\word{1})$.  Hence in $B^\lcs_0(X_\ev,p)$, we only have two equivalence classes of words in $V_n$ for each $n \geq 0$ since $E^+(\word{1}w)$ is determined by the parity of zeros on the right side of $\word{1}w$.  This Bratteli diagram, and additionally $B^\lcu_0(X_\ev,p)$, are depicted in Figure \ref{fig:even-bratteli-heteroclinic}.

\begin{figure}
\centering

\begin{tikzpicture}[scale=0.7]
\pgfmathsetmacro{\W}{3};
\pgfmathsetmacro{\H}{5};
\pgfmathsetmacro{\D}{1pt};

\node[align=center] at (-1.5*\W, -0.95*\H) {$B^\lcs_0\left(X_\text{even}, \overline{\word{1}}\right)$};

\node[shape=rectangle, draw=black] (S) at (-1.5*\W,-1.25*\H) {$\ldots \word{111}$};

\node[shape=rectangle, draw=black] (V00) at (-2.25*\W,-1.5*\H) {$\word{1}$};
\node[shape=rectangle, draw=black] (V01) at (-0.75*\W,-1.5*\H) {$\word{0}$};
    
    \path (S) edge (V00);
    \path (S) edge (V01);

\node[shape=rectangle, draw=black, align=left] (V10) at (-2.25*\W,-2*\H) {$\word{11},\word{00}$};
\node[shape=rectangle, draw=black] (V11) at (-0.75*\W,-2*\H) {$\word{10}$};

    \path (V00) edge (V10);
    \path (V00) edge (V11);
    
    \path (V01) edge (V10);

\node[shape=rectangle, draw=black, align=left] (V20) at (-2.25*\W,-2.5*\H) {$\word{111},\word{001},\word{100}$};
\node[shape=rectangle, draw=black, align=left] (V21) at (-0.75*\W,-2.5*\H) {$\word{110},\word{000}$};
    
    \path (V10) edge (V20);
    \path (V10) edge (V21);
    
    \path (V11) edge (V20);

\node[shape=rectangle, draw=black, align=left] (V30) at (-2.25*\W,-3.1*\H) {$\word{1111},\word{0011},$\\$\word{1001},\word{1100},$\\$\word{0000}$};
\node[shape=rectangle, draw=black, align=left] (V31) at (-0.75*\W,-3.1*\H) {$\word{1110},\word{0010},$\\$\word{1000}$};
    
    \path (V20) edge (V30);
    \path (V20) edge (V31);
    
    \path (V21) edge (V30);
    
\node (E0) at (-2.25*\W,-3.4*\H) {$\vdots$};
\node (E1) at (-0.75*\W,-3.4*\H) {$\vdots$};

\node[align=center] at (1.5*\W, -0.95*\H) {$B^\lcu_0\left(X_\text{even}, \overline{\word{1}}\right)$};

\node[shape=rectangle, draw=black] (S) at (1.5*\W,-1.25*\H) {$\word{111} \ldots$};

\node[shape=rectangle, draw=black] (V00) at (0.75*\W,-1.5*\H) {$\word{1}$};
\node[shape=rectangle, draw=black] (V01) at (2.25*\W,-1.5*\H) {$\word{0}$};
    
    \path (S) edge (V00);
    \path (S) edge (V01);

\node[shape=rectangle, draw=black, align=left] (V10) at (0.75*\W,-2*\H) {$\word{11},\word{00}$};
\node[shape=rectangle, draw=black] (V11) at (2.25*\W,-2*\H) {$\word{01}$};

    \path (V00) edge (V10);
    \path (V00) edge (V11);
    
    \path (V01) edge (V10);

\node[shape=rectangle, draw=black, align=left] (V20) at (0.75*\W,-2.5*\H) {$\word{111},\word{001},\word{100}$};
\node[shape=rectangle, draw=black, align=left] (V21) at (2.25*\W,-2.5*\H) {$\word{011},\word{000}$};
    
    \path (V10) edge (V20);
    \path (V10) edge (V21);
    
    \path (V11) edge (V20);

\node[shape=rectangle, draw=black, align=left] (V30) at (0.75*\W,-3.1*\H) {$\word{1111},\word{0011},$\\$\word{1001},\word{1100},$\\$\word{0000}$};
\node[shape=rectangle, draw=black, align=left] (V31) at (2.25*\W,-3.1*\H) {$\word{0111},\word{0001},$\\$\word{1000}$};
    
    \path (V20) edge (V30);
    \path (V20) edge (V31);
    
    \path (V21) edge (V30);
    
\node (E0) at (0.75*\W,-3.4*\H) {$\vdots$};
\node (E1) at (2.25*\W,-3.4*\H) {$\vdots$};
\end{tikzpicture}

\caption{The Bratteli diagrams $B^\lcs_0(X_\text{even}, \overline{\word{1}})$ and $B^\lcu_0(X_\text{even}, \overline{\word{1}})$ for the even shift.  Here we are using the synchronizing periodic point $p = \overline{\word{1}} = \ldots \word{11.111} \ldots$.  The top vertex in each diagram indicates the left-infinite ray $\overline{\word{1}}_{(-\infty,0]} = \ldots \word{111}$ and the right-infinite ray $\overline{\word{1}}_{[0,\infty)} = \word{111} \ldots$, respectively.  Then, paths in each diagram indicate how to build stable (unstable) local conjugacy equivalence classes inductively.}
\label{fig:even-bratteli-heteroclinic}
\end{figure}

Since $X_\ev$ is a mixing sofic shift, by Theorem \ref{theorem:shift-bratteli-lcu-lcs-morita-equivalence} we can compute the $K$-theory of $S(X_\ev, \sigma, p)$ and $U(X_\ev, \sigma, p)$ from the Bratteli diagrams $B^\lcs_0(X_\ev,p)$ and $B^\lcu_0(X_\ev, p)$ respectively.  From each of the Bratteli diagrams $B^\lcs_0(X_\ev,p)$ and $B^\lcu_0(X_\ev,p)$, we can see that the transition matrices are given by \[ A_\lcs = A_\lcu = \begin{bmatrix} 1 & 1 \\ 1 & 0 \end{bmatrix} \,. \]  Hence the $K$-theory of $S(X_\ev, \sigma, p)$ and $U(X_\ev, \sigma, p)$ can be computed as
\begin{align*}
    K_0\big(S(X_\ev, \sigma, p)\big) &\cong \varinjlim \left( \Z^2, A_\lcs \right) \text{ , and } \\
    K_0\big(U(X_\ev, \sigma, p)\big) &\cong \varinjlim \left( \Z^2, A_\lcu \right) \,.
\end{align*}
We also remark that by the Morita equivalence in \cite[Theorem 5.5]{deeley2022}, the $K_0$ groups computed above are independent of our choice of $p$.

Lastly we point out that \[ A_\lcs \otimes A_\lcu = \begin{bmatrix} 1 & 1 & 1 & 1 \\ 1 & 0 & 1 & 0 \\ 1 & 1 & 0 & 0 \\ 1 & 0 & 0 & 0 \end{bmatrix} = A_\sync \,. \]  This is a direct reflection of the fact from \cite[Theorem 6.11]{deeley2022} that $S(X_\ev, \sigma, p) \otimes U(X_\ev, \sigma, p)$ is Morita equivalent to $\I_\sync(X_\ev, \sigma)$, which follows from that fact that the even shift is mixing and the fact that sofic shifts are finitely presented.

In particular, observe that since $A$ and $A_\sync$ are both invertible we have \[ \text{rank}(K_0(A(X_\ev, \sigma))) = 5 \] while \[ \text{rank}(K_0(\I_\sync(X_\ev, \sigma))) = 4 \,.\]  Since $S(X_\ev, \sigma, p) \otimes U(X_\ev, \sigma, p)$ is Morita equivalent to $\I_\sync(X_\ev, \sigma)$ and $K$-theory is invariant under Morita equivalence, we conclude that $A(X_\ev, \sigma)$ cannot be Morita equivalent to $S(X_\ev, \sigma, p) \otimes U(X_\ev, \sigma, p)$.  Since this fact is true for Smale spaces (see \cite{putnam_1996}), and consequently shifts of finite type, we can see that the failure of $X_\ev$ to be a shift of finite type can be shown from the $K$-theory of its homoclinic and heteroclinic algebras.

Note that, as we remarked before, the heteroclinic synchronizing algebras are distinct from the heteroclinic algebras defined by Thomsen.  This can be seen in the fact that our heteroclinic synchronizing algebras of the even shift do not pick up on the point of all zeros, unlike the heteroclinic algebras compute in \cite[Example 3.5]{thomsen2010c}.

\section{The Charge Constrained Shift}\label{sec:dc-shift}

Fix a positive integer $c$.  Then the \emph{charge constrained shift} $X_c \subseteq \{ \word{+1}, -1 \}^\Z$ is the set of all sequences $x \in \{\word{+1},\word{-1}\}^\Z$ such that the sum $s$ of the entries of any finite subword of $x$ satisfies $-c \leq s \leq c$.  This is a sofic shift for all $c$, see \cite[Example 1.2.7]{lindmarcus}.  The charge constrained shift admits the following labeled graph presentation with $c+1$ vertices, which we denote $G_c$.  Note that $G_c$ is an irreducible graph, so $X_c$ is irreducible \cite{lindmarcus}.

\vspace{1em}
\begin{center}
\scalebox{0.9}{
\begin{tikzpicture}[scale=0.7]
\tikzset{every loop/.style={looseness=10, in=130, out=230}}
\pgfmathsetmacro{\S}{1.1}
\pgfmathsetmacro{\D}{0.7cm}

\node[draw=black, minimum size=\D] (A) at (0,0) {$v_0$};
\node[draw=black, minimum size=\D] (B) at (3,0) {$v_1$};
\node[draw=black, minimum size=\D] (C) at (6,0) {$v_2$};
\node[draw=black, minimum size=\D] (D) at (9,0) {$v_{c-1}$};
\node[draw=black, minimum size=\D] (E) at (12,0) {$v_{c}$};

\path[->,>=stealth] (A) edge[out=60, in=120] node[scale=\S, above] {$\word{+1}$} (B);
\path[->,>=stealth] (B) edge[out=240, in=300] node[scale=\S, below] {$\word{-1}$} (A);
\path[->,>=stealth] (B) edge[out=60, in=120] node[scale=\S, above] {$\word{+1}$} (C);
\path[->,>=stealth] (C) edge[out=240, in=300] node[scale=\S, below] {$\word{-1}$} (B);
\path[->,>=stealth] (D) edge[out=60, in=120] node[scale=\S, above] {$\word{+1}$} (E);
\path[->,>=stealth] (E) edge[out=240, in=300] node[scale=\S, below] {$\word{-1}$} (D);
\node[align=center] at (7.5, 0) {$\cdots$};
\end{tikzpicture}
}
\end{center}
Note that there are two different ways to embed the graph $G_{c-1}$ into the graph $G_c$.  Let $\{w_0, w_1, \dots, w_{c - 1} \}$ denote the vertices of the graph $G_{c-1}$, then we can embed $G_{c-1}$ into $G_c$ via
\begin{align*}
    w_i &\mapsto v_i \text{ , or } \\
    w_i &\mapsto v_{i+1}
\end{align*}
for $0 \leq i < c$.  Hence we get an embedding of $X_{c-1}$ into $X_c$.  From another perspective, we get this embedding because for every $x \in X_{c-1}$ the sum $s$ of the entries of any finite subword of $x$ satisfies $-c < -c+1 \leq s \leq c-1 < c$.

The primary intention of introducing this particular shift space is that it is a synchronizing shift for which the set of non-synchronizing points, $X_c \setminus (X_c)_\sync$, is uncountable.  The charge constrained shift is synchronizing because it is formed from a finite labeled graph, hence it is sofic.  Furthermore, $X_c$ is a shift space which is \emph{almost of finite type (AFT)}, see \cite{boyle85}.  The graph $G_c$ is indeed the minimal presentation of $X_c$ which is both left-resolving and right-resolving.  The even shift from Example \ref{even-shift} is also a shift space which is almost of finite type.

\begin{theorem}
For $c > 2$, the set of non-synchronizing points in the shift space $X_c$ is uncountable.
\end{theorem}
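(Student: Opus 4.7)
The plan is to embed $X_{c-1}$ into $X_c$ as a set of non-synchronizing points and deduce uncountability from that of $X_2$. First I would characterize synchronizing words via the presentation $G_c$: for $w = a_1 \cdots a_n \in \lang(X_c)$ set $s_0 = 0$, $s_k = a_1 + \cdots + a_k$, $m(w) = \min_{0 \leq k \leq n} s_k$, and $M(w) = \max_{0 \leq k \leq n} s_k$. A path in $G_c$ representing $w$ may begin at $v_i$ exactly when $-m(w) \leq i \leq c - M(w)$. Because $G_c$ is both left- and right-resolving, the context $E(w)$ is a Cartesian product if and only if $w$ is represented by a unique path in $G_c$, equivalently $M(w) - m(w) = c$; thus $w$ is non-synchronizing precisely when $M(w) - m(w) \leq c - 1$.

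Next I would translate this into a condition on points. Write $x \in X_c$ as the bi-infinite sequence $(v_{j_k})_{k \in \Z}$ of vertices it traces out in $G_c$. Then $x$ is non-synchronizing iff every finite window $x_{[-N, N]}$ is non-synchronizing, which by the previous step is equivalent to $\sup_k j_k - \inf_k j_k \leq c - 1$. Equivalently, the entire trajectory $(j_k)$ lies inside some strip of vertices $\{v_a, v_{a+1}, \ldots, v_{a+c-1}\} \subseteq G_c$ for some $a \in \{0, 1\}$.

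To finish, the subgraph of $G_c$ induced by $\{v_0, v_1, \ldots, v_{c-1}\}$ is isomorphic to $G_{c-1}$ via the relabeling $v_i \mapsto w_i$; this is precisely the first of the two embeddings mentioned immediately before the theorem. Hence bi-infinite paths on this subgraph are in bijection with $X_{c-1}$, and this assignment gives an injection of $X_{c-1}$ into the set of non-synchronizing points of $X_c$. For $c > 2$ we have $c - 1 \geq 2$, so it is enough to observe that $X_2$ is uncountable: in $G_2$, every bi-infinite path visits the middle vertex $v_1$ at every second time step and at each such visit makes an independent binary choice (bounce to $v_0$ or to $v_2$), yielding $2^{\aleph_0}$ distinct paths.

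The main obstacle is the synchronizing-word characterization in step one, specifically the implication that $M(w) - m(w) < c$ forces $w$ to be non-synchronizing. To verify this, given two valid starting vertices $v_{i_0} \neq v_{i_1}$ for $w$, one must produce $(u_0, t_0), (u_1, t_1) \in E(w)$ with $(u_0, t_1) \notin E(w)$. Concretely, $u_j = (\word{-1})^c (\word{+1})^{i_j}$ has a unique representing path in $G_c$, ending at $v_{i_j}$, and a symmetric construction yields $t_j$ whose unique representing path begins at $v_{i_j + s_n}$; then $(u_0, t_1)$ would require $w$ to be read starting at $v_{i_0}$ yet ending at $v_{i_1 + s_n}$, which is incompatible with right-resolvingness. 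Once this characterization is secured, the remaining steps are essentially bookkeeping.
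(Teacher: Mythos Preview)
Your proof is correct and follows essentially the same strategy as the paper: embed $X_{c-1}$ into the non-synchronizing points of $X_c$ and then invoke the uncountability of $X_2$. The paper's argument is slightly more direct---it establishes only the implication actually needed (that a word with two representing paths in $G_c$ cannot be synchronizing) via an explicit contradiction with a concrete pair in $E(w)$, rather than your full if-and-only-if characterization $M(w)-m(w)=c$---but the substance is the same.
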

\begin{proof}
Fix $c > 2$.  If $x \in X_{c-1} \subseteq X_c$ then there will always be two distinct bi-infinite paths in $G_c$ corresponding to $x$ under the factor map $\pi:X_{G_c} \to X_c$.  This is because one can see there are two ways to embed the graph $G_{c-1}$ into $G_c$.  Assume for the sake of contradiction that $x \in X_{c-1} \subseteq X_c$ and there is an $n$ such that $x_{[-n,n]}$ is a synchronizing word.  Let $p_1$ and $p_2$ be two distinct paths on $G_c$ which represent the word $x_{[-n,n]}$.  By the two embeddings of $G_{c-1}$ described above, without loss of generality we have
\begin{align*}
    &s(p_1) = v_i &&s(p_2) = v_{i+1} \\
    &r(p_1) = v_j &&r(p_2) = v_{j+1}
\end{align*}
for some $0 \leq i, j < c$.  In particular, the sum of the entries in $x_{[-n,n]}$ is $j - i$.  Observe that then we have both
\begin{align*}
    \Big((\word{-1})(\word{+1})^i, (\word{+1})^{c-j}(\word{-1}) \Big) &\in E\parens{x_{[-n,n]}} \text{ , and } \\
    \Big((\word{-1})(\word{+1})^{i+1}, (\word{+1})^{c-j-1}(\word{-1}) \Big) &\in E\parens{x_{[-n,n]}}
\end{align*}
from the existence of $p_1$ and $p_2$ respectively.  Since we are assuming $x_{[-n,n]}$ is synchronizing, $\displaystyle \Big((\word{-1})(\word{+1})^{i+1}, (\word{+1})^{c-j}(\word{-1})\Big) \in E\parens{x_{[-n,n]}}$.  Thus we can form the word \[ w = (\word{+1})^{i+1} x_{[-n,n]} (\word{+1})^{c-j} \,. \]  However, the sum of the entries in $w$ is $(i + 1) + (j - i) + (c - j) = c + 1$. This is a contradiction since the sum of the entries in $w$ exceeds $c$, hence $w$ is not a valid word in $X_c$.  Hence if $x \in X_{c-1} \subseteq X_c$ then $x$ cannot be synchronizing.  

Observe that $X_c$ is uncountable for $c \geq 2$.  A quick proof of this follows from the fact that $X_2 \subseteq X_c$ for $c \geq 2$, and from the fact that there is an injective function from the uncountable set $\{ \word{a}, \word{b} \}^\Z$ to $X_2$ given by $\word{a} \mapsto (\word{+1})(\word{-1})$ and $\word{b} \mapsto (\word{-1})(\word{+1})$.  Since we are assuming $c > 2$ we thus have that $X_{c-1}$ is uncountable.  In conclusion, we have argued that $X_{c-1} \subseteq X_c \setminus (X_c)_\sync$, and it follows that the set $X_c \setminus (X_c)_\sync$ is uncountable.
\end{proof}

Even though the set of non-synchronizing points is infinite, the heteroclinic synchronizing algebras and the synchronizing ideal are easy to determine. This follows since $X_c$ is of almost finite type and we have its minimal presentation. Explicitly, for example when $c=4$, one get that $S(X_c) \cong U(X_c) \cong$ the unique $C^*$-stable AF-algebra whose $K_0$-group is obtained from the stationary inductive limit with the matrix 
\[
\begin{bmatrix} 0 & 1 & 0 & 0 \\ 1 & 0 & 1 & 0  \\ 0 & 1 & 0 & 1 \\ 0 & 0 & 1 & 0 \end{bmatrix}
\]
The $K$-theory of the synchronizing ideal is then obtained via Corollary \ref{K-theory-mixing-sofic}.

\section{Sofic shifts that are not almost finite type} \label{sec:sofic-not-finite}

In this section, we consider an example of a sofic shift where we can see that it is not of almost finite type using $K$-theory. To be clear, there are much more effective way of checking if a sofic shift is of almost finite type and indeed to compute the $K$-theory one (often) needs the minimal right and left resolving covers. However, it is of interest that in some examples the $K$-theory retains this information.

We consider the sofic shift associated to the following labelled graph:

\vspace{1em}
\begin{center}
\begin{tikzpicture}
\tikzset{every loop/.style={looseness=10, in=130, out=230}}
\pgfmathsetmacro{\S}{0.9}

\node[shape=circle, draw=black] (A) at (0,0) {};
\node[shape=circle, draw=black] (B) at (2,0) {};
\path[->,>=stealth] (A) edge[in=200, out=250, looseness=10] node[scale=\S, left] {$\word{c}$} (A); 
\path[->,>=stealth] (A) edge[in=100, out=150, looseness=10] node[scale=\S, left] {$\word{a}$} (A); 
\path[->,>=stealth] (B) edge[in=300, out=350, looseness=10] node[scale=\S, right] {$\word{a}$} (B); 
\path[->,>=stealth] (B) edge[in=370, out=420, looseness=10] node[scale=\S, right] {$\word{f}$} (B);
\path[->,>=stealth] (A) edge[out=30, in=150] node[scale=\S, above] {$\word{b}$} (B);
\path[->,>=stealth] (A) edge[out=70, in=110] node[scale=\S, above] {$\word{d}$} (B);
\path[->,>=stealth] (B) edge[out=230, in=310] node[scale=\S, above] {$\word{b}$} (A);
\path[->,>=stealth] (B) edge[out=260, in=280] node[scale=\S, below] {$\word{c}$} (A);
\end{tikzpicture}    
\end{center}

We learn of this sofic shift from the slides of a talk given by Soren Eilers. The presentation above is the minimal right-resolving presentation. The minimal left-resolving presentation is given by the labelled graph:

\vspace{1em}
\begin{center}
\begin{tikzpicture}
\tikzset{every loop/.style={looseness=10, in=130, out=230}}
\pgfmathsetmacro{\S}{0.9}

\node[shape=circle, draw=black] (A) at (-2,0) {};
\node[shape=circle, draw=black] (B) at (2,0) {};
\node[shape=circle, draw=black] (C) at (0, -2) {};
\path[->,>=stealth] (A) edge[in=100, out=150, looseness=10] node[scale=\S, left] {$\word{a}$} (A);

\path[->,>=stealth] (B) edge[in=300, out=350, looseness=10] node[scale=\S, right] {$\word{a}$} (B); 
\path[->,>=stealth] (B) edge[in=370, out=420, looseness=10] node[scale=\S, right] {$\word{f}$} (B);

\path[->,>=stealth] (C) edge[in=200, out=240, looseness=10] node[scale=\S, left] {$\word{a}$} (C);
\path[->,>=stealth] (C) edge[in=250, out=290, looseness=10] node[scale=\S, below] {$\word{b}$} (C); 
\path[->,>=stealth] (C) edge[in=300, out=340, looseness=10] node[scale=\S, right] {$\word{c}$} (C);

\path[->,>=stealth] (A) edge[out=30, in=150] node[scale=\S, above] {$\word{b}$} (B);
\path[->,>=stealth] (A) edge[out=70, in=110] node[scale=\S, above] {$\word{d}$} (B);
\path[->,>=stealth] (A) edge[out=330, in=120] node[scale=\S, above] {$\word{d}$} (C);

\path[->,>=stealth] (B) edge[out=190, in=350] node[scale=\S, above] {$\word{b}$} (A);
\path[->,>=stealth] (B) edge[out=250, in=380] node[scale=\S, above] {$\word{f}$} (C);

\path[->,>=stealth] (C) edge[out=160, in=300] node[scale=\S, above] {$\word{c}$} (A);

\end{tikzpicture}    
\end{center}

The adjacency matrix of these two graphs are respectively 
\[
\begin{bmatrix} 2 & 2  \\  2 & 2 \end{bmatrix} \hbox{ and }\begin{bmatrix} 1 & 2 & 1  \\ 1 & 2 & 1  \\ 1 & 0 & 3  \end{bmatrix}
\]
The rank of the first of these matrices is one and the second has rank two. It follows from Corollary \ref{K-theory-mixing-sofic} that $K_0(\I_{sync})$ has rank two, but for any shift of finite type the $K_0$-group of the homoclinic algebra is a perfect square (since $K_0(S)$ and $K_0(U)$ have the same rank for shifts of finite type). It follows from Corollary \ref{almost-finite-type-algebras} (among other ways) that this sofic shift is not of almost finite type.

\section{The $\word{a}^n\word{b}^n$-Shift}\label{context-free-shift}

What we call the $\word{a}^n\word{b}^n$-shift is a modified version of what is called the context-free shift in \cite{lindmarcus}.  We let the \emph{$\word{a}^n\word{b}^n$-shift} be the closure of the set of sequences in $\{ \word{a}, \word{b} \}^\mathbb{Z}$ where any finite string of $\word{a}$'s, if it is followed by a string of $\word{b}$'s, is followed by a finite string of $\word{b}$'s of the same length.  We denote this shift space $X_\word{ab} \subseteq \{\word{a}, \word{b}\}^\Z$.  Alternatively one can think of this as the closure of the set of bi-infinite sequences formed from freely concatenating strings in the set $\{\word{a}^n\word{b}^n \mid n \geq 1\}$.  From this latter fact, the $\word{a}^n\word{b}^n$-shift is by definition a \emph{coded system} \cite{fiebig91}.  However, it is in particular a synchronizing shift.

\begin{lemma}\label{lem:cf-shift-is-sync}
The $\word{a}^n\word{b}^n$-shift is a synchronizing shift.
\end{lemma}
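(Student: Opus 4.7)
The plan is to reduce both conditions of Definition \ref{def:synchronizing-shift} to elementary features of a labeled graph presentation of $X_\word{ab}$. Specifically, $X_\word{ab}$ is presented by the (infinite) labeled graph $G$ with vertex set $\{v_k\}_{k \geq 1} \cup \{w_k\}_{k \geq 1}$, edges $v_k \to v_{k+1}$ and $w_1 \to v_1$ labeled $\word{a}$, and edges $v_k \to w_k$ and $w_{k+1} \to w_k$ labeled $\word{b}$; equivalently, this is the graph of Example \ref{ex:lcu-lcs-sync-existence-counterexample} with the $\word{c}$-loop removed. Bi-infinite edge sequences on $G$ are precisely the bi-infinite concatenations of $\word{a}^n\word{b}^n$ blocks, and every point of $X_\word{ab}$ is either such a concatenation or a pointwise limit thereof. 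Because any finite subword depends on only finitely many coordinates, I would observe that $\lang(X_\word{ab})$ coincides with the set of labels of finite paths in $G$.

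With this identification in hand, irreducibility is immediate: $G$ is strongly connected, because from any vertex $v_k$ or $w_k$ one reaches $w_1$ by walking along $\word{b}$-edges (dropping from $v_k$ down to $w_k$ if needed, then sliding leftward to $w_1$), and from $w_1$ every other vertex is reachable via $w_1 \to v_1 \to v_2 \to \cdots$ with a possible final drop $v_k \to w_k$. Given $u,v \in \lang(X_\word{ab})$, lift them to finite paths in $G$ and join the terminal vertex of the first to the initial vertex of the second; the label of this connecting path is a bridging word $w$ with $uwv \in \lang(X_\word{ab})$.

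The substantive step is to exhibit a synchronizing word, and $\word{ba}$ will do. A quick inventory of $G$ shows that the only $\word{a}$-edge whose source lies in the bottom row is $w_1 \to v_1$. Consequently, wherever $\word{ba}$ occurs in a finite path on $G$ the trailing $\word{a}$-edge must be $w_1 \to v_1$, so the path sits at vertex $v_1$ immediately after reading $\word{ba}$. Given $u\word{ba}, \word{ba}v \in \lang(X_\word{ab})$, any realizing path for $u\word{ba}$ terminates at $v_1$ and any realizing path for $\word{ba}v$ has its $v$-tail emanating from $v_1$; concatenating the path for $u\word{ba}$ with the $|v|$-edge tail of the path for $\word{ba}v$ produces a valid path in $G$ with label $u\word{ba}v$, so $u\word{ba}v \in \lang(X_\word{ab})$.

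The only point where any real care is required is the identification of $\lang(X_\word{ab})$ with the labels of finite paths in $G$, because $X_\word{ab}$ was defined as a closure; but this is routine, since pointwise convergence guarantees that any finite subword of a limit point is already a subword of some approximating block concatenation. Everything else reduces to reading off the edge structure of $G$, the crucial observation being that $w_1$ is the unique vertex from which an outgoing $\word{a}$-edge can follow an incoming $\word{b}$-edge.
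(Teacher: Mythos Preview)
Your proof is correct and reaches the same conclusion as the paper---namely that $\word{ba}$ is a synchronizing word and that $X_\word{ab}$ is irreducible---but the route you take is different in flavor. The paper argues directly at the level of words: it observes that any word extendible to the left of $\word{ba}$ has the shape $w\word{b}\word{a}^n\word{b}^{n-1}$ and any word extendible to the right has the shape $\word{a}^{m-1}\word{b}^m\word{a}v$, and then checks by inspection that the concatenation $w\word{b}\word{a}^n\word{b}^n\word{a}^m\word{b}^m\word{a}v$ is a valid word. Irreducibility is handled by a similar ad hoc bridging argument. Your approach instead imports the right-resolving labeled graph $G_r$ (which the paper only introduces later, in Section~\ref{sec:infinite-rank}) and reduces both claims to strong connectivity of $G_r$ together with the observation that $w_1 \to v_1$ is the unique $\word{a}$-edge out of the bottom row, forcing every realization of $\word{ba}$ to land at $v_1$.

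What each buys: the paper's argument is self-contained at this point in the exposition and requires no auxiliary machinery, at the cost of being somewhat informal about the ``essential form'' of words (for instance, words like $\word{a}^5$ that contain no $\word{b}$ require a moment's thought to fit into the stated template). Your argument is cleaner and more systematic once the graph is in hand, and it makes the synchronizing mechanism completely transparent---it is exactly the technique used later in Lemma~\ref{cf-shift-sync-vertex-correspondence}. The cost is that you must justify the identification $\lang(X_\word{ab}) = \{\text{labels of finite paths in }G\}$, which you do correctly by noting that closure does not enlarge the set of finite subwords. Either approach is fine; yours has the advantage of previewing the graph-theoretic framework that drives the rest of Section~\ref{context-free-shift}.
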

\begin{proof}
We will show that the word $\word{ba} \in \lang(X_\word{ab})$ is a synchronizing word.  Any word that can be added on the left of $\word{ba}$ is essentially of the form $w \word{b} \word{a}^n \word{b}^{n-1}$, and word that can be added on the right is of the form $\word{a}^{m-1} \word{b}^m \word{a} v$ --- where $v$ and $w$ are arbitrary.  Observe that $w \word{b} \word{a}^n \word{b}^n \word{a}^m \word{b}^m \word{a} v$ is a valid word in the $\word{a}^n\word{b}^n$-shift, hence $\word{ba}$ is a synchronizing word.  The irreducibility of the $\word{a}^n\word{b}^n$-shift can be seen from considering the case when $w = w' \word{b}\word{a}^{n+k} \word{b}^n$ and $v = \word{a}^m \word{b}^{m + l} \word{a} v'$, then $w \word{b}^k \word{a}^l v$ is a valid word in $X_\word{ab}$.
\end{proof}

\begin{lemma}\label{lem:cf-sync-iff-ba}
Suppose $w \in \lang(X_\word{ab})$ is a word in the $\word{a}^n\word{b}^n$-shift, then $w$ is synchronizing if and only if $\word{ba} \sqsubseteq w$.
\end{lemma}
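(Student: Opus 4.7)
The plan is to prove the two directions separately. The $(\Leftarrow)$ direction is immediate: by Lemma \ref{lem:cf-shift-is-sync} the word $\word{ba}$ is synchronizing, and as noted right after Definition \ref{def:synchronizing-shift} any word containing a synchronizing word is itself synchronizing. So if $\word{ba} \sqsubseteq w$ then $w$ is synchronizing.

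For the $(\Rightarrow)$ direction I would argue the contrapositive. Suppose $w \in \lang(X_\word{ab})$ with $\word{ba} \not\sqsubseteq w$. Then $w$ consists of a (possibly empty) block of $\word{a}$'s followed by a (possibly empty) block of $\word{b}$'s, so $w = \word{a}^i\word{b}^j$ for some $i, j \geq 0$ with $i + j = |w| \geq 1$. I will show that $w$ is not synchronizing by exhibiting pairs $(u_1, v_1), (u_2, v_2) \in E(w)$ for which $(u_1, v_2) \notin E(w)$, so that $E(w)$ fails to be a Cartesian product.

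The key observation that drives every case is the following block-rigidity fact: if $\word{b}\word{a}^n\word{b}^m$ appears as a subword of any point of $X_\word{ab}$, then the $\word{a}^n$ must be a maximal $\word{a}$-block (since it is preceded by $\word{b}$), and in the preclosure points this block is followed by exactly $n$ $\word{b}$'s; passing to the closure, a short diagonal argument on cylinders shows that the same equality must hold for any $x \in X_\word{ab}$ containing $\word{b}\word{a}^n\word{b}^m$. Thus $\word{b}\word{a}^n\word{b}^m \in \lang(X_\word{ab})$ forces $m \leq n$, and symmetrically $\word{a}^n\word{b}^m\word{a} \in \lang(X_\word{ab})$ forces $n \leq m$. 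With this in hand, each case is a short check. For instance, when $w = \word{a}^i$ with $i \geq 1$, the pairs $(\word{a}, \word{b}^{i+1})$ and $(\word{b}, \word{b}^i)$ both lie in $E(w)$ (they extend $w$ to the complete blocks $\word{a}^{i+1}\word{b}^{i+1}$ and $\word{b}\word{a}^i\word{b}^i$ respectively), but $(\word{b}, \word{b}^{i+1}) \notin E(w)$ by the rigidity fact. The case $w = \word{b}^j$ is symmetric. For $w = \word{a}^i\word{b}^j$ with $i, j \geq 1$, essentially the same strategy applies: one picks one pair that fits inside the block $\word{a}^{\max(i,j)+1}\word{b}^{\max(i,j)+1}$ and another pair that sits at a block boundary, then observes that the mixed pair forces a block of $\word{a}$'s of one size to be followed by a block of $\word{b}$'s of a different size, contradicting the block-rigidity fact.

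The main obstacle is the closure: the defining condition of $X_\word{ab}$ only constrains the preclosure points, so one must verify that no limiting process (for example, limits involving $\word{a}^\infty$ or $\word{b}^\infty$ one-sided rays) smuggles in a point containing $\word{b}\word{a}^n\word{b}^m$ with $m > n$. Fortunately this is handled by the observation that once a point contains a finite subword of the form $\word{b}\word{a}^n\word{b}^m$, an entire neighborhood cylinder also does, and any sequence of preclosure points converging into that cylinder must eventually contain the same forbidden configuration; so the block constraint on this finite pattern survives the closure. Once this rigidity is established, the case analysis reduces to writing down three or four small words per case, and the proof concludes.
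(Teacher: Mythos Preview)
Your proposal is correct and follows essentially the same route as the paper's proof: the forward direction is immediate from Lemma~\ref{lem:cf-shift-is-sync}, and for the contrapositive you write $w = \word{a}^i\word{b}^j$ and exhibit explicit pairs showing $E(w)$ is not a Cartesian product. The paper does this in a single stroke (assuming without loss of generality $i \leq j$ and using the pairs $(\word{b}\word{a}^{j-i}, \word{a})$, $(\word{b}\word{a}^{j-i+1}, \word{ba})$, $(\word{b}\word{a}^{j-i}, \word{ba})$), whereas you split into the cases $j=0$, $i=0$, and $i,j \geq 1$; your explicit discussion of why the block-rigidity constraint survives passage to the closure is a point the paper leaves implicit, so in that respect your write-up is more careful.
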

\begin{proof}
Since $\word{ba}$ is synchronizing, as shown in the proof of Lemma \ref{lem:cf-shift-is-sync}, $\word{ba} \sqsubseteq w$ implies $w$ is synchronizing.  Conversely, assume $\word{ba} \not\sqsubseteq w$, then $w = \word{a}^i\word{b}^j$ for some $i,j \geq 1$ (the words $\word{a}$, $\word{b}$, and the empty word, are not synchronizing).  Then, without loss of generality, if $i \leq j$ we have $(\word{b}\word{a}^{j - i}, \word{a}), (\word{b}\word{a}^{j - i + 1}, \word{ba}) \in E(w)$ but $(\word{b}\word{a}^{j - i}, \word{ba}) \not\in E(w)$, hence $E(w)$ is not a cartesian product and $w$ is not synchronizing.
\end{proof}

\subsection{Graph Presentation}\label{sec:cf-entropy}

In this section we discuss some properties of a graph with countable vertices which we will eventually give a labeling in the next section.  Consider the infinite graph \( G \) shown in the diagram below.

\begin{center}
\begin{tikzpicture}[scale=2]

\tikzset{vertex/.style = {shape=circle, draw}}
\tikzset{dots/.style = {}}
\tikzset{edge/.style = {->}}

\node[vertex] (v1) at (0, 1) {\(v_1\)};
\node[vertex] (v2) at (1, 1) {\(v_2\)};
\node[vertex] (v3) at (2, 1) {\(v_3\)};
\node[dots] (v4) at (3, 1) {\(\cdots\)};
\node[vertex] (w1) at (0, 0) {\(w_1\)};
\node[vertex] (w2) at (1, 0) {\(w_2\)};
\node[vertex] (w3) at (2, 0) {\(w_3\)};
\node[dots] (w4) at (3, 0) {\(\cdots\)};

\draw[edge] (v1) to[bend left] (w1);
\draw[edge] (v1) to (v2);
\draw[edge] (v2) to (w2);
\draw[edge] (v2) to (v3);
\draw[edge] (v3) to (w3);
\draw[edge] (v3) to (v4);
\draw[edge] (w1) to[bend left] (v1);
\draw[edge] (w2) to (w1);
\draw[edge] (w3) to (w2);
\draw[edge] (w4) to (w3);

\end{tikzpicture}
\end{center}
We can approximate \( G \) by a sequence of finite graphs \( \{ G_n \}_{n \geq 1} \) where \( G_n \) is the subgraph of \( G \) obtained by restricting to the set of vertices \( \{v_1, \dots, v_n, w_1, \dots, w_n \} \).  The adjacency matrix \( A_n \) corresponding to \( G_n \) can be described recursively by
\begin{align*}
    A_1 &= \begin{bmatrix} 0 & 1 \\ 1 & 0 \end{bmatrix} \in M_2(\mathbb{N}) \\
    A_{n+1} &= \begin{bmatrix} A_n & \begin{matrix} 0 & 0 \\ \vdots & \vdots \\ 0 & 0 \\ 0 & 0 \\ 0 & 1 \end{matrix} \\ \begin{matrix} 0 & \cdots & 0 & 1 & 0 \\ 0 & \cdots & 0 & 0 & 0 \end{matrix} & \begin{matrix} 0 & 0 \\ 1 & 0 \end{matrix} \end{bmatrix} \in M_{2n + 2}(\mathbb{N})
\end{align*}
Because each \( G_n \) is an irreducible graph, each \( A_n \) has a unique positive real eigenvalue equal to its spectral radius, called the Perron-Frobenius eigenvalue of \( A_n \), which we will denote as \( \lambda_n \).  We will compute \[ \lambda = \lim_{n \to \infty} \lambda_n \,. \]
We must first compute the characteristic polynomial \( p_n(t) \) for each \( A_n \).
\begin{lemma}
For all \( n \geq 1 \) we have the following relation.
\[ \det\left( \begin{matrix} tI_{2n} - A_n & \begin{matrix} 0 \\ \vdots \\ 0 \\ -1 \end{matrix} \\ \begin{matrix} 0 & \cdots & 0 & -1 & 0 \end{matrix} & 0 \end{matrix} \right) = -1 \]
\end{lemma}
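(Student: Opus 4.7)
The plan is to compute the determinant directly by a sequence of cofactor expansions, each along a row that contains a unique nonzero entry, and to recognize the residual matrix as lower bi-diagonal with an obvious determinant. Index the $2n+1$ rows and columns by $v_1, w_1, v_2, w_2, \ldots, v_n, w_n, *$, where $*$ denotes the augmenting row and column. From the edge set of $G_n$, each row $v_i$ of $tI_{2n} - A_n$ has exactly two nonzero entries: a $t$ at column $v_i$ and a $-1$ at column $v_{i-1}$ (reading $v_0$ as $w_1$ when $i=1$); in particular, these are the only entries to worry about in the $v$-rows.

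First, I would expand along the last row, which has its unique nonzero entry $-1$ at column $v_n$, with cofactor sign $(-1)^{(2n+1)+(2n-1)} = +1$. This contributes a factor of $-1$ to the determinant and deletes row $*$ and column $v_n$. In the resulting $2n \times 2n$ minor the row $v_n$ now has only the single surviving nonzero entry $-1$ at column $v_{n-1}$, because its diagonal $t$ lay in the just-deleted column $v_n$. Expanding along this row gives another factor of $-1$. Iterating, at the $k$-th step for $k = 2, \ldots, n$ the row $v_{n-k+2}$ has exactly one nonzero entry, $-1$ at column $v_{n-k+1}$, since the column $v_{n-k+2}$ was removed in the previous step and was the row's only other nonzero position. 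Each cofactor sign works out to $+1$, so each expansion accumulates a factor of $-1$.

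After $n$ expansions the sign picked up is $(-1)^n$, and what remains is an $(n+1) \times (n+1)$ matrix $F$ whose rows are indexed by $\{v_1, w_1, w_2, \ldots, w_n\}$ and whose columns are indexed by $\{w_1, w_2, \ldots, w_n, *\}$. A direct check against the edges of $G_n$ gives: row $v_1$ of $F$ is $(-1, 0, \ldots, 0)$ (only the original $-1$ at column $w_1$ survives); each row $w_i$ with $1 \leq i \leq n-1$ has a $t$ at column $w_i$ and a $-1$ at column $w_{i+1}$ and is zero elsewhere (the original $-1$ at column $v_i$ was removed); and the last row $w_n$ has a $t$ at column $w_n$ and a $-1$ at column $*$, the latter coming from the augmenting column of the original matrix. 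Hence $F$ is lower bi-diagonal with $-1$ on the diagonal, and $\det(F) = (-1)^{n+1}$.

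Combining, the determinant equals $(-1)^n \cdot (-1)^{n+1} = -1$, as claimed. The only nontrivial aspect is bookkeeping: verifying carefully that after each deletion the row chosen for the next expansion really does have a unique surviving nonzero entry, and that the alternating cofactor signs all turn out to be $+1$. The small cases $n=1,2$ can be verified directly by hand and confirm the indexing.
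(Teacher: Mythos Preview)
Your proof is correct. You carry out a direct computation: successive cofactor expansions along the rows $*, v_n, v_{n-1}, \ldots, v_2$, each of which has a single surviving nonzero entry $-1$ with cofactor sign $+1$, leaving a lower-triangular $(n+1)\times(n+1)$ block with diagonal $-1$.

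The paper takes a different route and argues by induction on $n$. For the inductive step it writes down the $(2n+3)\times(2n+3)$ matrix associated to $A_{n+1}$ and performs two cofactor expansions to reduce it exactly to the $(2n+1)\times(2n+1)$ matrix associated to $A_n$; the induction hypothesis then gives $-1$. Your approach has the advantage of being self-contained at a fixed $n$ and making the value $-1$ emerge visibly from the triangular residual, at the cost of tracking positions through $n$ deletions. The paper's inductive reduction is shorter and sidesteps that bookkeeping, since only two expansions are needed per step and the signs are immediate, but it requires recognizing that the reduced matrix is precisely the one from the previous stage.
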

\begin{proof}
In the case of \( n = 1 \) it is easy to check this is true.  Now assuming the determinant in the statement of the lemma is true for some \( n \geq 1 \), we compute the following using cofactor expansions
\begin{align*}
    \det\left( \begin{matrix} tI_{2n + 2} - A_{n + 1} & \begin{matrix} 0 \\ \vdots \\ 0 \\ -1 \end{matrix} \\ \begin{matrix} 0 & \cdots & 0 & -1 & 0 \end{matrix} & 0 \end{matrix} \right) &= \det\left( \begin{matrix} tI_{2n} - A_n & \begin{matrix} 0 & 0 & 0 \\ \vdots & \vdots & \vdots \\ 0 & 0 & 0 \\ 0 & -1 & 0 \end{matrix} \\ \begin{matrix} 0 & \cdots & 0 & -1 & 0 \\ 0 & \cdots & 0 & 0 & 0 \\ 0 & \cdots & 0 & 0 & 0 \end{matrix} & \begin{matrix} t & 0 & 0 \\ -1 & t & -1 \\ -1 & 0 & 0 \end{matrix} \end{matrix} \right) \\
    &= - \det\left( \begin{matrix} tI_{2n} - A_n & \begin{matrix} 0 & 0 \\ \vdots & \vdots \\ 0 & 0 \\ -1 & 0 \end{matrix} \\ \begin{matrix} 0 & \cdots & 0 & -1 & 0 \\ 0 & \cdots & 0 & 0 & 0 \end{matrix} & \begin{matrix} 0 & 0 \\ t & -1 \end{matrix} \end{matrix} \right) \\
    &= \det\left( \begin{matrix} tI_{2n} - A_n & \begin{matrix} 0 \\ \vdots \\ 0 \\ -1 \end{matrix} \\ \begin{matrix} 0 & \cdots & 0 & -1 & 0 \end{matrix} & 0 \end{matrix} \right) \\
    &= -1 \,.
\end{align*}
\end{proof}

\begin{lemma}\label{lem:cf-shift-transition-matrix-invertible}
For each \( n \geq 1 \), the characteristic polynomial \( p_n(t) \) of the matrix \( A_n \) satisfies \[ p_{n+1}(t) = t^2 p_n(t) - 1 \] where \( p_1(t) = t^2 - 1 \).
\end{lemma}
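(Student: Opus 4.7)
The plan is to compute $p_{n+1}(t) = \det(tI_{2n+2} - A_{n+1})$ by a single cofactor expansion along the last row, then apply the preceding lemma to evaluate one of the resulting minors. The base case $p_1(t) = \det(tI_2 - A_1) = t^2 - 1$ is immediate from the explicit form of $A_1$.

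First I would write $tI_{2n+2} - A_{n+1}$ using the given recursive block structure, and observe that its last row has only two nonzero entries: a $-1$ in column $2n+1$ (coming from the $(2,1)$-entry of $tI_2 - D$) and a $t$ in column $2n+2$. Cofactor expansion along this row, after the signs $(-1)^{(2n+2)+(2n+1)} = -1$ and $(-1)^{(2n+2)+(2n+2)} = 1$ are taken into account, yields
\[
p_{n+1}(t) \;=\; M_{2n+2,\,2n+1} \;+\; t \cdot M_{2n+2,\,2n+2},
\]
where $M_{i,j}$ denotes the corresponding minor.

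Next I would evaluate each minor separately. The minor $M_{2n+2,\,2n+2}$ is obtained by deleting the last row and last column; its rightmost column is zero except for a $t$ in the bottom-right corner (since the first column of the top-right block of $A_{n+1}$ is identically zero), so a further cofactor expansion along that column gives $M_{2n+2,\,2n+2} = t \cdot p_n(t)$. The minor $M_{2n+2,\,2n+1}$, obtained by removing row $2n+2$ and column $2n+1$, has $tI_{2n} - A_n$ in its top-left $2n \times 2n$ block, its last column equal to $(0,\ldots,0,-1)^T$, its last row equal to $(0,\ldots,0,-1,0)$, and its $(2n+1,2n+1)$-entry equal to $0$ (namely the $(1,2)$-entry of $tI_2 - D$). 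This is precisely the matrix appearing in the preceding lemma, so $M_{2n+2,\,2n+1} = -1$.

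Combining these gives $p_{n+1}(t) = -1 + t \cdot t \cdot p_n(t) = t^2 p_n(t) - 1$, completing the induction. The main obstacle is bookkeeping: verifying that the cofactor signs work out as claimed, and — more critically — confirming that the $(2n+1) \times (2n+1)$ submatrix isolated as $M_{2n+2,\,2n+1}$ matches the preceding lemma's matrix on the nose, in particular that its bottom-right corner entry really does vanish. Both checks reduce to a careful reading of the given block form for $A_{n+1}$.
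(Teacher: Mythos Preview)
Your proof is correct and follows essentially the same approach as the paper's: a cofactor expansion of $\det(tI_{2n+2}-A_{n+1})$ that isolates the minor $M_{2n+2,\,2n+1}$ (evaluated as $-1$ via the preceding lemma) and a second minor that collapses to $t\,p_n(t)$ after one further expansion. The only cosmetic difference is that you expand first along the last row while the paper expands along column $2n+1$, so your ``other'' minor is $M_{2n+2,\,2n+2}$ (with last column $(0,\dots,0,t)^T$) rather than the paper's $M_{2n+1,\,2n+1}$ (with last row $(0,\dots,0,t)$); both reduce identically.
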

\begin{proof}
That \( p_1(t) = t^2 - 1 \) is clear.  Then we compute via cofactor expansion
\begin{align*}
    p_{n+1}(t) &= \det\left(tI_{2n+2} - A_{n+1}\right) \\
    &= \det\left( \begin{matrix} tI_{2n} - A_n & \begin{matrix} 0 & 0 \\ \vdots & \vdots \\ 0 & 0 \\ 0 & -1 \end{matrix} \\ \begin{matrix} 0 & \cdots & 0 & -1 & 0 \\ 0 & \cdots & 0 & 0 & 0 \end{matrix} & \begin{matrix} t & 0 \\ -1 & t \end{matrix} \end{matrix} \right) \\
    &= t\det\left( \begin{matrix} tI_{2n} - A_n & \begin{matrix} 0 \\ \vdots \\ 0 \\ -1 \end{matrix} \\ \begin{matrix} 0 & \cdots & 0 & 0 & 0 \end{matrix} & t \end{matrix} \right) + \det\left( \begin{matrix} tI_{2n} - A_n & \begin{matrix} 0 \\ \vdots \\ 0 \\ -1 \end{matrix} \\ \begin{matrix} 0 & \cdots & 0 & -1 & 0 \end{matrix} & 0 \end{matrix} \right) \\
    &= t^2 \det\left(tI_{2n} - A_n \right) - 1 \\
    &= t^2 p_n(t) - 1 \,.
\end{align*}
\end{proof}

From the recursive definition of the characteristic polynomials of \( A_n \) we find that \[ p_n(t) = t^{2n} - \sum_{k = 0}^{n-1} t^{2k} \,. \]  In particular, $\det(A_n) = -1$ for each $n$, so each $A_n$ is invertible over $\Z$.

Towards computing $\lambda$, we define \( q_n(t) = (t^2 - 1)p_n(t) = t^{2n + 2} - 2t^{2n} + 1 \).  Note that \( \lambda_n \) is a solution to \( q_n(t) \) for each \( n \).  Observe that the largest zero of \( q'_n(t) \) is \(\displaystyle t_n = \sqrt{\frac{2n}{n+1}} \) and that \( q'_n(t) > 0 \) for \( t > t_n \).  Furthermore one can check that \( q_n(t_n) \leq 0 \) and \( q_n(\sqrt{2}) = 1 \) for each \( n \), so since \( \lambda_n \) is the largest real root of \( q_n(t) \) we must have that \[ \sqrt{\frac{2n}{n+1}} \leq \lambda_n < \sqrt{2} \] for all \( n \).  Taking the limit with respect to \( n \), we obtain \[ \lambda = \lim_{n \to \infty} \lambda_n = \sqrt{2} \,. \] In the next section we will see that a labeling of the edges in $G$ gives a presentation of the $\word{a}^n\word{b}^n$-shift.  We conjecture that the topological entropy of $X_\word{ab}$, defined to be \[ h(X_\word{ab}) = \lim_{n \to \infty} \frac{1}{n} \log\left(|\lang_n(X_\word{ab})|\right) \,, \] is equal to $\log(\lambda)$.  However this is not in general the case for synchronizing shifts, see \cite{petersen86}.

\subsection{$K$-Theory is Infinite Rank}\label{sec:infinite-rank}

We will show that the $K_0$ group of the homoclinic algebra of the $\word{a}^n\word{b}^n$-shift is infinite rank.  This is in contrast to sofic shifts which have finite rank $K$-theory.  This latter fact follows from \cite[Theorem 3.2.10]{lindmarcus} which says that sofic shifts have a finite number of follower sets, and from the construction in Section \ref{sec:shift-bratteli-homoclinic}.  We can compute the $K$-theory of a shift space $X$ from the Bratteli diagram $B^\lc(X)$ --- which is itself built from follower sets.

In the case of the homoclinic algebra $A(X_\word{ab}, \sigma)$ of the $\word{a}^n\word{b}^n$-shift, we will show that $K_0\big(A(X_\word{ab}, \sigma)\big)$ is infinite rank.  Note from Section \ref{sec:shift-bratteli-homoclinic} that $K_0\big(A(X_\word{ab}, \sigma)\big)$ can be computed as
\[ K_0\big(A(X_\word{ab}, \sigma)\big) = \varinjlim \left( \Z^{N_n}, \widetilde{A}_n \right) \]
where $\widetilde{A}_n$ are the transition matrices in the Bratteli diagram $B^\lc(X_\word{ab})$.  We will denote these transition matrices as $\widetilde{A}_n$ in order to disambiguate them from the adjacency matrices $A_n$ in Section \ref{sec:cf-entropy}.

Consider the two graphs in Figure \ref{fig:context-free-shift-graph}.  Notice that both $G_r$ and $G_l$ are labeled versions of the graph $G$ in the previous section.  The important aspect of these graphs is that $G_r$ and $G_l$ are, respectively, \emph{right-resolving} and \emph{left-resolving}, see \cite[Section 3.3]{lindmarcus}.  For $G_r$ (right-resolving), this means that for each vertex there is at most a single edge out of that vertex with a given label.  Likewise for $G_l$ (left-resolving), each vertex has at most a single edge into that vertex with a given label.  Also see \cite[Definition 0.12]{fiebig91} for more details.

Recall from Lemma \ref{lem:cf-sync-iff-ba} that a word $w$ in the $\word{a}^n\word{b}^n$-shift is synchronizing if and only if $w$ contains the word $\word{ba}$ as a subword.  Consequently, a path which represents a synchronizing word must thus go through $s_r$ on $G_r$ or $s_l$ on $G_l$.

\begin{figure}
\centering

\begin{tikzpicture}[scale=2]

\tikzset{vertex/.style = {shape=circle, draw}}
\tikzset{dots/.style = {}}
\tikzset{edge/.style = {->}}

\node at (-1, 0.5) {$G_r$:};

\node[vertex] (v1) at (0, 1) {};
\node[vertex] (v2) at (1, 1) {};
\node[vertex] (v3) at (2, 1) {};
\node[dots] (v4) at (3, 1) {\(\cdots\)};
\node[vertex] (w1) at (0, 0) {\(s_r\)};
\node[vertex] (w2) at (1, 0) {};
\node[vertex] (w3) at (2, 0) {};
\node[dots] (w4) at (3, 0) {\(\cdots\)};

\draw[edge] (v1) to[bend left] node[right]{$\word{b}$} (w1);
\draw[edge] (v1) to node[above]{$\word{a}$} (v2);
\draw[edge] (v2) to node[right]{$\word{b}$} (w2);
\draw[edge] (v2) to node[above]{$\word{a}$} (v3);
\draw[edge] (v3) to node[right]{$\word{b}$} (w3);
\draw[edge] (v3) to node[above]{$\word{a}$} (v4);
\draw[edge] (w1) to[bend left] node[left]{$\word{a}$} (v1);
\draw[edge] (w2) to node[below]{$\word{b}$} (w1);
\draw[edge] (w3) to node[below]{$\word{b}$} (w2);
\draw[edge] (w4) to node[below]{$\word{b}$} (w3);

\node at (-1, -1.5) {$G_l$:};

\node[vertex] (v1) at (0, -1) {\(s_l\)};
\node[vertex] (v2) at (1, -1) {};
\node[vertex] (v3) at (2, -1) {};
\node[dots] (v4) at (3, -1) {\(\cdots\)};
\node[vertex] (w1) at (0, -2) {};
\node[vertex] (w2) at (1, -2) {};
\node[vertex] (w3) at (2, -2) {};
\node[dots] (w4) at (3, -2) {\(\cdots\)};

\draw[edge] (v1) to[bend left] node[right]{$\word{a}$} (w1);
\draw[edge] (v1) to node[above]{$\word{a}$} (v2);
\draw[edge] (v2) to node[right]{$\word{a}$} (w2);
\draw[edge] (v2) to node[above]{$\word{a}$} (v3);
\draw[edge] (v3) to node[right]{$\word{a}$} (w3);
\draw[edge] (v3) to node[above]{$\word{a}$} (v4);
\draw[edge] (w1) to[bend left] node[left]{$\word{b}$} (v1);
\draw[edge] (w2) to node[below]{$\word{b}$} (w1);
\draw[edge] (w3) to node[below]{$\word{b}$} (w2);
\draw[edge] (w4) to node[below]{$\word{b}$} (w3);

\end{tikzpicture}

\caption{Right-resolving and left-resolving labeled graphs for the $\word{a}^n\word{b}^n$-shift.}
\label{fig:context-free-shift-graph}
\end{figure}

Let $V_r$ and $V_l$ denote the set of vertices in $G_r$ and $G_l$ respectively.

\begin{lemma}\label{cf-shift-sync-vertex-correspondence}
Let $w$ be a synchronizing word in the $\word{a}^n\word{b}^n$-shift.  Then $w$ corresponds uniquely to an element of $V_l \times V_r$.
\end{lemma}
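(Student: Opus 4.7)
The plan is to define two maps $\tau_l$ and $\tau_r$ on synchronizing words, taking values in $V_l$ and $V_r$ respectively, and then to show the assignment $w \mapsto (\tau_l(w), \tau_r(w))$ gives the desired unique correspondence. First I will invoke Lemma \ref{lem:cf-sync-iff-ba} to fix an occurrence of the subword $\word{ba}$ in $w$, writing $w = u \word{ba} v$. The construction of each $\tau$ will then use the resolving properties of the two graphs in Figure \ref{fig:context-free-shift-graph}.

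For $\tau_r(w) \in V_r$: consider any path in $G_r$ representing $w$. The $\word{b}$ in the distinguished occurrence terminates at a bottom-row vertex, and the subsequent $\word{a}$ must leave a bottom-row vertex; since the only $\word{a}$-edge leaving a bottom-row vertex in $G_r$ is $s_r \to v_1$, the path must traverse $s_r \to v_1$ at this position. Hence immediately after the distinguished $\word{ba}$ the path sits at $v_1$, and by the right-resolving property the suffix $v$ uniquely determines the remainder of the path. I will define $\tau_r(w)$ to be the terminal vertex of this extension. Symmetrically for $\tau_l(w) \in V_l$: the only $\word{b}$-edge in $G_l$ whose target admits an outgoing $\word{a}$-edge (top-row vertices being the only ones with such edges) is $w_1 \to s_l$, so any representation of $w$ must realize the distinguished $\word{b}$ by this edge. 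Hence just before the distinguished $\word{ba}$ the path sits at $w_1$, and the left-resolving property lets us trace the prefix $u$ backward uniquely to an initial vertex, which I define to be $\tau_l(w)$.

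The main point left to verify, and the only real subtlety, is that this definition does not depend on the chosen occurrence of $\word{ba}$ when $w$ contains more than one. This is straightforward: every occurrence forces the path through $s_r \to v_1$ in $G_r$ and through $w_1 \to s_l$ in $G_l$. Consequently, right-resolving starting from the position after any occurrence produces the same sequence of vertices beyond that point, so in particular the terminal vertex agrees with the one computed from the last occurrence. Likewise, tracing backward using left-resolving from the position before any occurrence yields the same initial vertex as tracing backward from the first occurrence. Thus $(\tau_l(w), \tau_r(w))$ is unambiguous.

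I expect no substantial obstacle here; the hardest step conceptually is simply recognizing how the resolving properties interact with the forced passage through $s_r$ and $s_l$ that Lemma \ref{lem:cf-sync-iff-ba} guarantees. Everything else is bookkeeping in the labeled graph. Once the lemma is established, it sets up the next step — bounding the number of follower-set classes containing a given synchronizing word by $|V_l| \cdot |V_r|$ in each finite truncation $G_n$, from which the infinite-rank claim for $K_0\big(A(X_\word{ab}, \sigma)\big)$ can be extracted via the transition matrices $\widetilde{A}_n$.
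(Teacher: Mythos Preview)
Your proposal is correct and follows essentially the same approach as the paper: decompose $w = u\word{ba}v$ via Lemma~\ref{lem:cf-sync-iff-ba}, observe that any representing path must pass through $s_r$ (in $G_r$) and $s_l$ (in $G_l$), and then use right- and left-resolving to pin down the terminal vertex in $G_r$ and the initial vertex in $G_l$. Your explicit check that the assignment is independent of which occurrence of $\word{ba}$ is chosen is harmless but unnecessary, since $\tau_l(w)$ and $\tau_r(w)$ are simply the initial and terminal vertices of the representing paths and do not reference the decomposition at all.
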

\begin{proof}
Let $p_l$ and $p_r$ be paths on $G_l$ and $G_r$ respectively which both represent the word $w$.  Since $w$ is synchronizing is contains $\word{ba}$, so $w = u\word{ba}v$ for some words $u$ and $v$.  Observe that the portion of $p_l$ representing $u\word{b}$ must terminate at $s_l$ on $G_l$, and likewise the portion of $p_r$ representing $\word{a}v$ must start at $s_r$ on $G_r$.  Since $G_l$ is left-resolving, there is a unique path representing $u\word{b}$ and terminating at $s_l$, so there is a unique vertex $x$ on $G_l$ where $p_l$ must initiate.  Similarly, since $G_r$ is right-resolving, there is a unique path representing $\word{a}v$ and starting at $s_r$ which must terminate at a unique vertex $y$ on $G_r$.  Hence $w$ corresponds uniquely to the pair $(x,y) \in V_l \times V_r$.
\end{proof}

In fact, two synchronizing word $w,v \in \lang(X)$ are context equivalent (see Definition \ref{def:shift-context-equivalence}) if and only if they correspond to the same element of $V_l \times V_r$.  Furthermore, we have the following result.

\begin{lemma}\label{lem:cf-shift-sync-vertex-bijection}
There is a bijective correspondence between context equivalence classes of synchronizing words and the set $V_l \times V_r$,
\end{lemma}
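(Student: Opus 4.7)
The plan is to verify that the map $[w] \mapsto (x_w, y_w)$ from Lemma \ref{cf-shift-sync-vertex-correspondence} gives a bijection from context equivalence classes of synchronizing words onto $V_l \times V_r$. Well-definedness and injectivity follow from the observation made just before the lemma, which in turn rests on the fact that $E(w) = E^-(w) \times E^+(w)$ for synchronizing $w$: tracing through the proof of Lemma \ref{cf-shift-sync-vertex-correspondence}, one sees that $E^-(w)$ equals the set of labels of finite paths in $G_l$ terminating at $x_w$, and $E^+(w)$ equals the set of labels of finite paths in $G_r$ starting at $y_w$. These predecessor and follower sets separate vertices (for instance, $\word{a}^k$ lies in the predecessor set of $v_i \in V_l$ if and only if $k \leq i-1$, distinguishing the $v_i$'s from each other, and a similar analysis handles the $w_i$'s as well as the $v_i$-vs-$w_j$ comparisons), so distinct pairs give distinct contexts.

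The bulk of the proof is surjectivity, which I handle by an explicit path construction. Both $G_l$ and $G_r$ are irreducible: from any vertex one descends the $w$-ladder to $w_1$ and then traverses $w_1 \to v_1$ to access the $v$-row. Given $(x,y) \in V_l \times V_r$, I first choose a finite path in $G_l$ from $x$ to $w_1$ and extend by the unique $\word{b}$-labeled edge $w_1 \to v_1 = s_l$; call the resulting label $u\word{b}$. Next I take the $\word{a}$-edge $w_1 = s_r \to v_1$ in $G_r$ followed by any path from $v_1$ to $y$, with label $\word{a}v$. Set $w := u\word{b}\word{a}v$.

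To see that $w \in \lang(X_\word{ab})$, extending the $G_l$-path by any $\word{a}$-edge out of $v_1$ puts $u\word{b}\word{a}$ in $\lang(X_\word{ab})$, and prepending the $\word{b}$-edge $w_2 \to w_1$ to the $G_r$-path puts $\word{b}\word{a}v$ in $\lang(X_\word{ab})$; since $\word{ba}$ is synchronizing by Lemma \ref{lem:cf-shift-is-sync}, we conclude $u\word{b}\word{a}v \in \lang(X_\word{ab})$. To check that the pair assigned to $w$ by Lemma \ref{cf-shift-sync-vertex-correspondence} is $(x,y)$, note that the first occurrence of $\word{ba}$ in $w$ forces any $G_l$-path representing $w$ to take the edge $w_1 \to v_1$ at that position, so the portion representing $u\word{b}$ ends at $s_l$; by left-resolving of $G_l$ it is then the unique path in $G_l$ with label $u\word{b}$ terminating at $s_l$, which is exactly the path I constructed starting at $x$, giving $x_w = x$. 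The symmetric argument in $G_r$ yields $y_w = y$.

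The main obstacle is really the verification that predecessor sets in $G_l$ and follower sets in $G_r$ separate vertices, so that distinct pairs give distinct equivalence classes; this is a finite combinatorial check but must be done carefully, distinguishing the $v_i$, $w_j$ cases and the cross cases. Once this is in hand, the surjectivity is a direct walk-building exercise exploiting irreducibility of $G_l$ and $G_r$ together with the synchronizing property of $\word{ba}$ to stitch the two halves of the word together.
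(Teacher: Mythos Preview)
Your proposal is correct and follows essentially the same route as the paper: injectivity via the identification of $E(w)$ with the predecessor/follower sets at $(x_w,y_w)$, and surjectivity by using irreducibility of $G_l$ and $G_r$ to build a path from $x$ to $s_l$ and from $s_r$ to $y$. The only notable difference is in the verification that the concatenated word lies in $\lang(X_\word{ab})$: you invoke the synchronizing property of $\word{ba}$ directly, whereas the paper observes that the follower set of $s_r$ coincides with that of $s_l$; both arguments work and yours is arguably cleaner.
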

\begin{proof}
The correspondence in Lemma \ref{cf-shift-sync-vertex-correspondence} is injective since if two words $w,v \in \lang(X)$ correspond to the same element of $V_l \times V_r$ then it is easy to see $E(w) = E(v)$, hence they are in the same context equivalence class.  To show it is also surjective we use the irreducibility of the graphs $G_l$ and $G_r$.  Given $x \in V_l$ and $y \in V_r$, by irreducibility there are paths $p_l$ going from $x$ to $s_l$ and $p_r$ going from $s_r$ to $y$.  Let $w$ and $v$ be the labels of $p_l$ and $p_r$ respectively.  Since $s_l$ only has a single in-edge labeled $\word{b}$, and $s_r$ only a single out-edge labeled $\word{a}$, we must have $wv = w'\word{ba}v'$.  Hence $wv$ is synchronizing and corresponds to $(x,y) \in V_l \times V_r$.  Note that $wv$ is a valid word in the $\word{a}^n\word{b}^n$-shift since the follower set of the vertex $s_r$ is the same as the follower set of the vertex $s_l$.  Since $v$ is in the follower set of $s_r$ and $w$ terminates at $s_l$, it follows that there is a path in $G_l$ representing the word $wv$.
\end{proof}

Our goal is to show that $K_0\big(A(X_\word{ab}, \sigma)\big)$ is infinite rank.  We will now prove this.

\begin{theorem}\label{theorem:cf-shift-K-infinite-rank}
The abelian group $K_0\big(A(X_\word{ab}, \sigma)\big)$ is infinite rank.
\end{theorem}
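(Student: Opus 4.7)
My plan is to reduce the problem to showing that $K_0\bigl(\I_\sync(X_\word{ab}, \sigma)\bigr)$ has infinite rank, using the short exact sequence discussed in Section \ref{sec:ses}:
\[ 0 \longrightarrow \I_\sync(X_\word{ab}, \sigma) \longrightarrow A(X_\word{ab}, \sigma) \longrightarrow A(X_\word{ab}, \sigma)/\I_\sync(X_\word{ab}, \sigma) \longrightarrow 0 \,. \]
Each of the three algebras here is AF, as each arises from a Bratteli diagram (the full diagram $B^\lc(X_\word{ab})$, its restriction to synchronizing vertices, and the complementary sub-diagram on non-synchronizing vertices, respectively). Since $K_1$ vanishes for every AF-algebra, the six-term exact sequence collapses to an injection $K_0(\I_\sync(X_\word{ab}, \sigma)) \hookrightarrow K_0(A(X_\word{ab}, \sigma))$, so it suffices to produce infinitely many linearly independent generators in $K_0(\I_\sync(X_\word{ab}, \sigma))$.

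To analyze the synchronizing ideal, I would study the restriction of $B^\lc(X_\word{ab})$ to synchronizing vertices. By Lemma \ref{lem:cf-shift-sync-vertex-bijection}, these vertices, taken over all Bratteli levels, are in bijection with the countably infinite set $V_l \times V_r$. The key structural observation is that the Bratteli transition $[w] \mapsto [awb]$ sends a synchronizing vertex $(x,y) \in V_l \times V_r$ to $(x',y')$, where $x' \xrightarrow{a} x$ is an edge in $G_l$ and $y \xrightarrow{b} y'$ is an edge in $G_r$; since $G_l$ is left-resolving and $G_r$ is right-resolving, the vertex $x'$ (resp.\ $y'$) is uniquely determined by $x$ and the label $a$ (resp.\ $y$ and the label $b$). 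Because the unlabeled graphs underlying $G_l$ and $G_r$ are both the graph $G$ of Section \ref{sec:cf-entropy}, the synchronizing transition has tensor-product form: restricted to the finite sub-block $V_n^l \times V_n^r$ corresponding to the subgraph $G_n$, it is described (up to edges escaping the block) by the matrix $A_n^T \otimes A_n$, where $A_n$ is the adjacency matrix of $G_n$.

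By Lemma \ref{lem:cf-shift-transition-matrix-invertible}, $\det(A_n) = -1$ for every $n$, so each $A_n$, and hence each $A_n^T \otimes A_n$, is invertible over $\Z$. I would exploit this invertibility to show that the $(2n)^2$ synchronizing generators indexed by $V_n^l \times V_n^r$ eventually map to linearly independent elements of the direct limit $\varinjlim\bigl(\Z^{N_k}, \widetilde{A}_k\bigr)$. Since this can be arranged for every $n$, the limit contains subgroups of arbitrarily large finite rank, and therefore $K_0(\I_\sync(X_\word{ab}, \sigma))$ is infinite rank, whence $K_0(A(X_\word{ab}, \sigma))$ is as well.

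The main technical obstacle is that the finite blocks $V_n^l \times V_n^r$ are not invariant under the synchronizing transition; for instance, the backward edge from $w_n$ in $G_l$ exits the block to $w_{n+1}$. One must therefore argue carefully that such escape transitions do not destroy the linear independence guaranteed by invertibility of $A_n^T \otimes A_n$. A natural approach is to pass to a strictly larger block $V_{n+k}^l \times V_{n+k}^r$ after $k$ steps, use invertibility of the larger tensor matrix there to reabsorb escaping generators, and iterate; the invertibility of all the $A_n$ simultaneously makes this bookkeeping feasible and prevents any collapse in the limit.
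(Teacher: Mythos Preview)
Your approach is essentially the same as the paper's: both identify the synchronizing context classes with pairs in $V_l\times V_r$ via Lemma~\ref{lem:cf-shift-sync-vertex-bijection}, both observe that the Bratteli transition on synchronizing vertices has the tensor form governed by the adjacency matrices $A_n$, and both conclude infinite rank from $\det(A_n)=-1$.

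Two points of comparison are worth noting. First, your reduction via the short exact sequence and vanishing of $K_1$ for AF-algebras is correct but unnecessary; the paper simply observes that $\rank(\widetilde{A}_n)\geq\rank(A_n^\sync)$ because $A_n^\sync$ is the restriction of $\widetilde{A}_n$ to the synchronizing block, so unbounded rank on the synchronizing side already forces the full direct limit to have infinite rank. Second, the ``escaping'' obstacle you flag is handled in the paper more cleanly than you anticipate: the key observation is that a synchronizing word of length $2n+1$ contains $\word{ba}$, so the associated paths on $G_l$ and $G_r$ have length at most $2n$ and therefore remain within the truncated graphs $G_n$; hence $W_{2n+1}\subseteq\Z V_l^n\otimes\Z V_r^n$ automatically. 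The transition then carries $W_{2n+1}$ into $W_{2n+3}\subseteq\Z V_l^{n+1}\otimes\Z V_r^{n+1}$, and one checks directly that this map is the restriction of an injective map between the ambient tensor spaces. No iterative reabsorption argument is needed: the increasing filtration $\Z V_l^n\otimes\Z V_r^n$ is compatible with the Bratteli levels by this length bound, and injectivity at each stage follows from invertibility of $A_n$.
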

\begin{proof}
Let $B^\lc(X_\word{ab}) = \left(\{ V_n \}_{n \geq -1}, \{ E_n \}_{n \geq -1} \right)$ be the Bratteli diagram constructed in Section \ref{sec:shift-bratteli-homoclinic}, and let $\widetilde{A}_n : \Z V_n \to \Z V_{n+1}$ denote the transition matrix induced by each $E_n$.  For $n \geq 0$, denote by $V^\sync_n \subseteq V_n$ the subset of vertices $v \in V_n$ such that $v = [w]$ with $w$ a synchronizing word in $\lang_{2n+1}(X_\word{ab})$.  By the bijective correspondence in Lemma \ref{lem:cf-shift-sync-vertex-bijection}, $V^\sync_n$ can be identified with a subset of $V_l \times V_r$.  Let \[ W_{2n + 1} = \Z(V^\sync_n) \subseteq \mathbb{Z}\left(V_l \times V_r\right) \cong \mathbb{Z}V_l \otimes \mathbb{Z}V_r \] denote the subspace spanned by $V^\sync_n$.

Denote by $V_l^n \subseteq V_l$ the set of vertices $\{v_1, \dots, v_n, w_1, \dots, w_n\}$ in $G_l$, and likewise for $V_r^n \subseteq V_r$.  Then $\Z V_l^n \otimes \Z V_r^n \subseteq \mathbb{Z}V_l \otimes \mathbb{Z}V_r$.  We get a $\Z$-linear map $A'_n : \Z V_l^n \otimes \Z V_r^n \to \Z V_l^{n+1} \otimes \Z V_r^{n+1}$ defined in terms of the matrix multiplication \[ A'_n(X) = A_n X A_n \] where $X \in \Z V_l^n \otimes \Z V_r^n$ is a $2n$ by $2n$ matrix and $A_n$ are the matrices defined in Section \ref{sec:cf-entropy}.

Recall that $V^\sync_n$ consists of equivalence classes of synchronizing words of length $2n + 1$ and that every synchronizing word must contain the word $\word{ba}$ as a subword.  Hence under the bijective correspondence in Lemma \ref{lem:cf-shift-sync-vertex-bijection}, any path representing $w$ on either of the graphs $G_r$ or $G_l$ must also be a path on $G_r$ or $G_l$ restricted to the vertices $V_l^n$ or $V_r^n$ respectively.  Hence $W_{2n+1} \subseteq \Z V_l^n \otimes \Z V_r^n$.  We will momentarily show the image of $W_{2n+1}$ under $A'_n$ satisfies \[ A'_n(W_{2n+1}) \subseteq W_{2n + 3} \subseteq \Z V_l^{n+1} \otimes \Z V_r^{n+1} \,. \]  Define $A^\sync_n : W_{2n+1} \to W_{2n+3}$ to be the map $A'_n$ restricted to $W_{2n + 1}$.

For each $n$, $A^\sync_n$ agrees with the transition matrix $\widetilde{A}_n$ restricted to $\Z V^\sync_n \subseteq \Z V_n$.  To see this, let $e_{ij} \in \Z V_l^n \otimes \Z V_r^n$ denote the matrix $x_i \otimes x_j$ where $1 \leq i,j \leq 2n$ and $x_i$ and $x_j$ are the $i$-th and $j$-th vertices in $V_l^n$ and $V_r^n$ respectively.  Hence the context equivalence class of a synchronizing word $w \in \lang_{2n + 1}(X)$ can be identified uniquely with some $e_{ij} \in \Z V_l^n \otimes \Z V_r^n$.  Then since $A_n$ are the adjacency matrices of the underlying unlabeled graphs of $G_r$ and $G_l$, one can see that \[ A'_n(e_{ij}) = A_n e_{ij} A_n = \sum_{\substack{(u,v) \in E(w) \\ |u| = |v| = 1}} [uwv] \,. \]  This inclusion $A^\sync_n(W_{2n+1}) \subseteq W_{2n + 3}$ follows from the fact that if $w$ is synchronizing and $(u,v) \in E(w)$ then $uwv$ is also synchronizing.

Each $A_n$ has determinant $\det(A_n) = -1$ (Lemma \ref{lem:cf-shift-transition-matrix-invertible}) and hence $A_n$ is invertible over $\Z$ for each $n$.  Hence $A'_n$ is injective onto its image: an inverse is given by the map $Y \to A_n^{-1} Y A_n^{-1}$.  Consequently $A'_n$ is full rank, and it follows that each $A^\sync_n$ is indeed full rank.  Note that $\rank(\widetilde{A}_n) \geq \rank(A^\sync_n)$ since $A^\sync_n$ is the restriction of $\widetilde{A}_n$ to $\Z V^\sync_n$.  Furthermore, the cardinality of $V^\sync_n$ increases without bound with respect to $n$ --- if this was not true it would contradict the bijective correspondence in Lemma \ref{lem:cf-shift-sync-vertex-bijection}.  In summary, since each $A^\sync_n$ is full rank, the rank of $\widetilde{A}_n$ increases without bound with respect to $n$.  Thus $K_0\big(A(X_\word{ab}, \sigma)\big)$ has infinite rank.
\end{proof}

We remark that the proof of Theorem \ref{theorem:cf-shift-K-infinite-rank} also shows that $K_0\big(\I_\sync(X_\word{ab}, \sigma)\big)$ has infinite rank.

\section{Example Related to the \emph{GICAR} Algebra}\label{sec:gicar}

Let $X \subseteq \{\word{a},\word{b},\word{c}\}^\mathbb{Z}$ be the closure of the set of bi-infinite paths on the following graph.

\begin{center}
\scalebox{1}{
\begin{tikzpicture}[
        > = stealth, 
        shorten > = 1pt, 
        auto,
        node distance = 3cm, 
        semithick, 
        scale = 0.5
    ]

    \tikzset{vertex/.style = {shape=circle,draw,minimum size=0.5em}}
    \tikzset{every loop/.style={looseness=10, in=230, out=130}}

    \node[vertex] (v1) {};
    \node[vertex] (v2) [right of=v1] {};
    \node[vertex] (v3) [right of=v2] {};
    \node (v4) [right of=v3] {\dots};

    \path[->] (v1) edge[loop left] node {$\word{a}$} (v1);
    \path[->] (v1) edge[bend left] node {$\word{b}$} (v2);
    \path[->] (v2) edge[bend left] node {$\word{b}$} (v3);
    \path[->] (v3) edge[bend left] node {$\word{b}$} (v4);
    \path[->] (v4) edge[bend left] node {$\word{c}$} (v3);
    \path[->] (v3) edge[bend left] node {$\word{c}$} (v2);
    \path[->] (v2) edge[bend left] node {$\word{c}$} (v1);

\end{tikzpicture}
}
\end{center}
This is a mixing synchronizing shift.  To see it is synchronizing, note that that a word $w \in \lang(X)$ is sychronizing if and only if $\word{a} \sqsubseteq w$.  It is also mixing by a similar argument as outlined in Example \ref{ex:lcu-lcs-sync-existence-counterexample}.  Furthermore, $X$ is not sofic since the words $\{ \word{ab}^n \mid n \geq 0 \}$ each have distinct follower sets, see \cite[Theorem 3.2.10]{lindmarcus}.

The following is the Bratteli diagram $B^\lcs_0(X,p)$ with respect to the periodic point $p = \overline{\word{a}} = \word{...aaa.aa...}$.  Note that the vertices of $B^\lcs_0(X,p)$ are in correspondence with equivalence classes of words $w \in E^+(\word{a})$, where $w \sim w'$ if and only if $|w| = |w'|$ and $E^+(\word{a}w) = E^+(\word{a}w')$ --- see Section \ref{sec:shift-bratteli-heteroclinic}.

\vspace{1em}

\begin{center}

\scalebox{1}{
\begin{tikzpicture}[scale=0.7]
\pgfmathsetmacro{\W}{2.5};
\pgfmathsetmacro{\H}{5};
\pgfmathsetmacro{\D}{1pt};

\node[shape=rectangle, draw=black] (S) at (0,-1*\H) {$\word{...aaa}$};

\node[shape=rectangle, draw=black] (V00) at (-0.7*\W,-1.5*\H) {$\word{a}$};
\node[shape=rectangle, draw=black] (V01) at (0.7*\W,-1.5*\H) {$\word{b}$};
    
    \path (S) edge node[above left] {$\word{a}$} (V00);
    \path (S) edge node[above right] {$\word{b}$} (V01);

\node[shape=rectangle, draw=black, align=left] (V10) at (-1.4*\W,-2*\H) {$\word{ab}$};
\node[shape=rectangle, draw=black, align=left] (V11) at (0,-2*\H) {$\word{aa}$, $\word{bc}$};
\node[shape=rectangle, draw=black, align=left] (V12) at (1.4*\W,-2*\H) {$\word{bb}$};

    \path (V00) edge node[above left] {$\word{b}$} (V10);
    \path (V00) edge node[above right] {$\word{a}$} (V11);
    \path (V01) edge node[above left] {$\word{c}$} (V11);
    \path (V01) edge node[above right] {$\word{b}$} (V12);
    
\node[shape=rectangle, draw=black, align=left] (V20) at (-2.1*\W,-2.5*\H) {$\word{abb}$};
\node[shape=rectangle, draw=black, align=left] (V21) at (-0.7*\W,-2.5*\H) {$\word{abc}$, $\word{aaa}$,\\$\word{bca}$};
\node[shape=rectangle, draw=black, align=left] (V22) at (0.7*\W,-2.5*\H) {$\word{aab}$, $\word{bcb}$,\\$\word{bbc}$};
\node[shape=rectangle, draw=black, align=left] (V23) at (2.1*\W,-2.5*\H) {$\word{bbb}$};

    \path (V10) edge node[above left] {$\word{b}$} (V20);
    \path (V10) edge node[above right] {$\word{c}$} (V21);
    \path (V11) edge node[above left] {$\word{a}$} (V21);
    \path (V11) edge node[above right] {$\word{b}$} (V22);
    \path (V12) edge node[above left] {$\word{c}$} (V22);
    \path (V12) edge node[above right] {$\word{b}$} (V23);
    
\node (E0) at (-2.1*\W,-2.7*\H) {$\vdots$};
\node (E1) at (-0.7*\W,-2.7*\H) {$\vdots$};
\node (E2) at (0.7*\W,-2.7*\H) {$\vdots$};
\node (E3) at (2.1*\W,-2.7*\H) {$\vdots$};
    
\end{tikzpicture}
}

\end{center}
It is interesting to note that the above Bratteli diagram corresponds to an AF-algebra called the \emph{GICAR} algebra, denoted $A_\text{GICAR}$, see \cite[Appendix]{renault1980groupoid}.  Renault proves that, \[ K_0(A_\text{GICAR}) \cong \Z[t] \] that is, the $K$-theory of $A_\text{GICAR}$ is the underlying abelian group of the ring of polynomials with integer coefficients.  This isomorphism enables a convenient description of the order structure: for $f \in K_0(A_\text{GICAR})$, $f > 0$ if and only if $f(t) > 0$ for all $t \in (0,1)$.

For us however, according Theorem \ref{theorem:shift-bratteli-lcu-lcs-isomorphism}, $S(X,\sigma,p)$ is isomorphic to the groupoid $C^\ast$-algebra of $G^\lcs(X,\sigma,p)$, which itself has the form \[ G^\lcs(X,\sigma,p) \cong \bigcup_{N \geq 0} G\left(B^\lcs_N(X,p)\right) \,. \]  Since $p = \word{...aaa.aaa...}$, $B^\lcs_N(X,p)$ is the same Bratteli diagram as $B^\lcs_0(X,p)$ for all $N$.  Hence $G^\lcs(X,\sigma,p)$ is an increasing union of the above Brattli diagram included into itself.  We have discussed how the vertices of $B^\lcs_0(X,p)$, and hence $B^\lcs_N(X,p)$, correspond to equivalence classes of words in $E^+(\word{a})$.  Let $[w;N]$ denote the vertex in $B^\lcs_N(X,p)$ corresponding to the equivalence class of the word $w$.  Then the inclusion of $B^\lcs_N(X,p)$ into $B^\lcs_{N+1}(X,p)$ sends the vertex $[w;N]$ to the vertex $[\word{a}w; N+1]$.

It is important to note that $G(B^\lcs_0(X,p))$ is not in general Morita equivalent to $G^\lcs(X,\sigma,p)$ --- we have only shown this is true for mixing sofic shifts, see Theorem \ref{theorem:shift-bratteli-lcu-lcs-morita-equivalence}.  In fact in this case, they are not Morita equivalent.  Furthermore, even though $X$ is mixing, we see in the proof of the following lemma that $S(X,\sigma,p)$ is not simple, which is in contrast to the mixing sofic case, see \cite[Theorem 6.10]{deeley2022}.

\begin{theorem}
The stable synchronizing algebra $S(X,\sigma,p)$ is not Morita equivalent to $A_\text{GICAR}$.
\end{theorem}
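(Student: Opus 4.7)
The plan is to distinguish $S(X,\sigma,p)$ from $A_\text{GICAR}$ via the existence of an order-unit in ordered $K_0$. Specifically, I will show that $K_0(S(X,\sigma,p))$ admits no order-unit, whereas $K_0(A_\text{GICAR}) \cong \Z[t]$ clearly has $1$ as an order-unit (every polynomial is bounded on $[0,1]$). Since having an order-unit is an isomorphism invariant of an ordered abelian group, and Morita equivalence of $C^\ast$-algebras induces an isomorphism on ordered $K_0$, this will prove the theorem.

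Since $p = \overline{\word{a}}$ is a fixed point and $\word{a}$ is itself synchronizing, the Bratteli diagrams $B^\lcs_N(X,p)$ are all isomorphic to the one depicted in the section above, namely the standard Pascal-triangle presentation of $A_\text{GICAR}$. Hence each $B_N := C^\ast(G(B^\lcs_N(X,p))) \cong A_\text{GICAR}$, and by Theorem \ref{theorem:shift-bratteli-lcu-lcs-isomorphism} we have $S(X,\sigma,p) = \overline{\bigcup_N B_N}$, which gives $K_0(S(X,\sigma,p)) = \varinjlim\,(K_0(B_N), \phi)$ where $\phi$ is induced by the inclusion $B_N \hookrightarrow B_{N+1}$. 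Following Renault, identify $K_0(A_\text{GICAR}) = \Z[t]$ so that the vertex at position $k$ in row $n$ has class $t^k(1-t)^{n-k}$, with positive cone consisting of polynomials strictly positive on $(0,1)$.

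The key computation is $\phi$. The inclusion of infinite paths $\left(B^\lcs_N(X,p)\right)^\infty \hookrightarrow \left(B^\lcs_{N+1}(X,p)\right)^\infty$ prepends the $\word{a}$-edge at the top, so at the vertex level $[w]$ at row $n$ of $B^\lcs_N$ is identified with $[\word{a}w]$ at row $n+1$ of $B^\lcs_{N+1}$. Checking small cases (for instance $[\word{a}] \mapsto [\word{aa}]$ lies at position $1$ of row $2$, while $[\word{b}] \mapsto [\word{ab}]$ lies at position $0$) shows that position $k$ at row $n$ goes to position $n-k$ at row $n+1$. Translated to $K_0$ this gives the additive homomorphism $\phi\colon t^a(1-t)^b \mapsto t^b(1-t)^{a+1}$, and a direct computation then yields $\phi^2(f) = t(1-t)\,f$ for all $f \in \Z[t]$.

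Letting $\alpha := t(1-t)$ and passing to the cofinal subsequence with connecting maps $\phi^2$ identifies $K_0(S(X,\sigma,p))$ with the localization $\Z[t][1/\alpha]$, whose positive cone still consists of elements positive on $(0,1)$. To show no order-unit exists, let $u = g/\alpha^N$ be any positive element (so $g \in \Z[t]$ is positive on $(0,1)$), and compare with $h := 1/\alpha^{N+1}$: we have $nu - h = (ng\alpha - 1)/\alpha^{N+1}$, and since $\alpha(t) \to 0$ as $t \to 0^+$ or $t \to 1^-$ while $g$ stays bounded, the numerator $ng\alpha - 1$ approaches $-1$ near the endpoints of $(0,1)$ for every $n$, so $nu - h$ is never in the positive cone. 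Hence $u$ is not an order-unit, and the theorem follows. The main obstacle is the explicit computation of $\phi$ on $K_0$, which requires carefully tracing the interaction of the AF-subalgebra inclusion with the Pascal-triangle structure of the GICAR diagram; once $\phi^2$ is recognized as multiplication by $t(1-t)$, the localization and order-unit analysis are essentially immediate.
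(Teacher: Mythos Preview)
Your argument is correct and takes a genuinely different route from the paper. The paper distinguishes $S(X,\sigma,p)$ from $A_{\mathrm{GICAR}}$ via their ideal lattices: it observes that ideals in $A_{\mathrm{GICAR}}$ correspond to vertices of the Pascal diagram, so any strictly ascending chain bounded below is finite, whereas in $S(X,\sigma,p)$ the clopen sets $X^{\mathrm{u}}(p,N)$ give an infinite strictly ascending chain $[\epsilon;0] < [\epsilon;1] < \cdots$ of proper ideals. Your approach instead works entirely inside ordered $K_0$: you identify the connecting map as $\phi(f)(t) = (1-t)\,f(1-t)$, obtain $\phi^2 = \cdot\,t(1-t)$, realise $K_0\big(S(X,\sigma,p)\big)$ as the localisation $\Z[t][1/t(1-t)]$, and then show this group has no order-unit by the endpoint argument. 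Both invariants---the ideal lattice and the existence of an order-unit in $K_0$---are preserved under Morita equivalence, so both arguments are valid. Your route is more computational but yields strictly more: it actually determines $K_0\big(S(X,\sigma,p)\big)$ as an ordered group, whereas the paper's argument only extracts a qualitative feature of the ideal lattice. The paper's argument, on the other hand, requires no $K$-theory computation at all once one knows the ideal structure of AF algebras. It is also worth noting that the two obstructions are related: for AF algebras, the absence of an order-unit in $K_0$ reflects exactly the kind of unbounded ideal growth the paper exhibits directly.
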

\begin{proof}
It is a fact that Morita equivalent $C^\ast$-algebras have isomorphic ideal structures, see \cite[Theorem 3.22]{raeburn1998morita}.  It is mentioned in \cite[Example III.5.5]{davidson1996c} that the ideals of $A_\text{GICAR}$ correspond to vertices in the Bratteli diagram $B^\lcs_0(X,p)$ above.  Hence ideals in $A_\text{GICAR}$ are in one-to-one correspondence with the vertices $[w;0]$ for $w \in E^+(\word{a})$.  The lattice structure on $\I(A_\text{GICAR})$ is given by the edge relation on the Bratteli diagram $B^\lcs_0(X,p)$.  Furthermore, each vertex $[w;N]$ corresponds to a proper ideal in $S(X,\sigma,p)$ since $G^0(B^\lcs_N(X,p)) \cong X^\text{u}(p, N)$ is an open invariant proper subset of $X^\text{u}(p)$, which is identified with the unit space of $G^\lcs(X,\sigma,p)$, see Theorem \ref{theorem:shift-bratteli-lcu-lcs-isomorphism} and \cite[Proposition 4.3.2]{sims2018etale}.  Note that we also identify $[w;N]$ with $[\word{a}w; N+1]$.

As can be read from the Bratteli diagram $B^\lcs_0(X,p)$: if $v \sqsubseteq w$ for $v,w \in E^+(\word{a})$, then $[w;N] \leq [v;N]$ in $\I(S(X,\sigma,p))$.  In particular, for $\epsilon \in E^+(\word{a})$ the empty word, $[\epsilon;N] = [\word{a};N+1] < [\epsilon;N+1]$ for all $N \geq 0$.  Hence we have that \[ [\epsilon; 0] < [\epsilon; 1] < [\epsilon; 2] < \dots \] is an infinite strictly ascending chain of ideals in $\I(S(X,\sigma,p))$ which is bounded below.  However, clearly no such chain can exist in $\I(A_\text{GICAR})$ since there are only finitely many vertices above a given vertex $[w;0]$ in $B^\lcs_0(X,p)$, and hence only a finite strictly ascending chain \[ [w_1w_2 \cdots w_n; 0] < [w_1w_2 \cdots w_{n-1}; 0] < \cdots < [w_1; 0] < [\epsilon; 0] \] is possible, where $n = |w|$ and $w = w_1w_2 \cdots w_n$.  Hence the respective lattices of ideals of $S(X,\sigma,p)$ and $A_\text{GICAR}$ cannot be isomorphic, and it follows that they cannot be Morita equivalent.
\end{proof}

Another aspect about this example we wish to highlight is that there is an intrinsic description of the quotient $A(X,\sigma)/\I_\sync \cong C^\ast(G^\lc_\text{non-sync}(X,\Sigma))$.

\begin{theorem}
Let $X \subseteq \{ \word{a}, \word{b}, \word{c} \}^\Z$ defined above.  Then $G^\lc_\text{non-sync}(X,\sigma)$ is defined by the following equivalence relation.  For $w \in \{ \word{b}, \word{c} \}^\ast$, let $d(w)$ be the integer \[ d(w) = |\{ i \mid 1 \leq i \leq |w|, w_i = \word{b} \}| - |\{ j \mid 1 \leq j \leq |w|, w_j = \word{c} \}| \,. \]  Define the equivalence relation $\sim$ on $\{ \word{b}, \word{c} \}^
\ast$ by \[ w \sim v \iff |w| = |v| \text{ and  } d(w) = d(v) \,. \]
\end{theorem}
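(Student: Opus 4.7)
The plan is to apply Lemma \ref{lem:shift-lc-conditions} and reduce to a combinatorial analysis of contexts of $\{\word{b},\word{c}\}^*$-words inside $\lang(X)$. I would first observe that $X_\text{non-sync}$ equals the full $2$-shift on $\{\word{b},\word{c}\}$: every word in $\{\word{b},\word{c}\}^*$ labels a valid walk on $G$ by starting at a sufficiently deep vertex $v_i$, so every bi-infinite sequence in $\{\word{b},\word{c}\}^\Z$ lies in the closure defining $X$. The problem therefore becomes that of describing the context equivalence classes of $\{\word{b},\word{c}\}^*$-words with respect to $\lang(X)$, and then translating the resulting equivalence into a relation on bi-infinite sequences in $\{\word{b},\word{c}\}^\Z$.

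Next I would compute $E(w)$ for $w \in \{\word{b},\word{c}\}^*$ combinatorially. Every pair $(a,b) \in \{\word{b},\word{c}\}^* \times \{\word{b},\word{c}\}^*$ automatically lies in $E(w)$, so two contexts can only be separated by pairs $(a,b)$ involving the letter $\word{a}$. Because $\word{a}$ is the unique loop at $v_1$, any occurrence of $\word{a}$ in $a$ or $b$ pins the walk to $v_1$ at that position, constraining the allowable starting or ending vertex of the walk for $w$. For $w = w_1 \cdots w_L \in \{\word{b},\word{c}\}^*$, valid walks start at $v_i$ with $i \geq m(w)+1$, where $m(w) := -\min_{0 \leq k \leq L} d(w_1\cdots w_k)$, and end at $v_{i+d(w)}$. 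This lets one describe exactly which $\word{a}$-involving pairs belong to $E(w)$ in terms of the statistics $d(w)$ and $m(w)$.

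The critical step is to translate this finite-word picture into the claimed equivalence on bi-infinite sequences. For $x,y \in X_\text{non-sync}$ with $x_n = y_n$ for $|n|>N$, the matching tails propagate $d$-equality to every level $N' \geq N$ via $d_{N'+1} = \alpha + d_{N'} + \beta$ with shared $\alpha = x_{-N'-1}$, $\beta = x_{N'+1}$. One would then invoke the recursion $m_{N'+1} = \max(0,\, m_{N'} - \alpha,\, -d_{N'+1})$ to show that, once the $d$-values for $x$ and $y$ agree, the depth statistics $m(x_{[-N',N']})$ and $m(y_{[-N',N']})$ coincide at some sufficiently large $N' \geq N$; combined with Lemma \ref{lem:shift-lc-conditions} this yields local conjugacy of $x$ and $y$.

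The forward direction (local conjugacy implies the $d$-condition) is immediate, since $d(w)$ can be read off from $E(w)$ by testing against pairs of the form $(\word{a}\word{b}^j,\word{b}^k\word{a})$, which lie in $E(w)$ precisely when $j+k = -d(w)$ and $j \geq m(w)$, so the common value $j+k$ recovers $d(w)$ from the context alone. The main obstacle is the reverse direction, namely the depth-parameter equilibration: the $\max$-type recursion for $m_{N'}$ mixes the previous value $m_{N'} - \alpha$ with the tail-dictated term $-d_{N'+1}$, and carefully arguing that for every pair of tails in $X_\text{non-sync}$ the two depth sequences must agree at some level is the subtle heart of the argument, and is where the bulk of the technical work will lie.
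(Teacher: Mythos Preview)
Your analysis goes substantially deeper than the paper's own argument. The paper simply asserts that for $w,v\in\{\word{b},\word{c}\}^*$ of equal length one has $E(w)=E(v)$ if and only if $d(w)=d(v)$, and stops there. You correctly recognize that the context of such a word also depends on the depth $m(w)=-\min_{0\le k\le |w|} d(w_1\cdots w_k)$: for instance $(\word{a},\word{a})\in E(\word{bc})\setminus E(\word{cb})$ although $d(\word{bc})=d(\word{cb})=0$. You then rightly locate the burden of the reverse implication in showing that, once the tails and the $d$-values agree, the depth parameters of $x_{[-N',N']}$ and $y_{[-N',N']}$ must eventually coincide.

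Unfortunately this equilibration step cannot be carried out. Take $x$ with $x_n=\word{c}$ for $n\le -1$ and $x_n=\word{b}$ for $n\ge 0$, and let $y$ agree with $x$ except that $y_{-1}=\word{b}$, $y_0=\word{c}$. Then $x_n=y_n$ for $|n|\ge 2$ and $d(x_{[-1,1]})=d(\word{cbb})=1=d(\word{bcb})=d(y_{[-1,1]})$, so the relation $\sim$ in the statement holds. But for every $N\ge 1$ one has $x_{[-N,N]}=\word{c}^N\word{b}^{N+1}$ with $m=N$, while $y_{[-N,N]}=\word{c}^{N-1}\word{b}\word{c}\word{b}^{N}$ with $m=N-1$; the pair $(\word{a}\word{b}^{N-1},\epsilon)$ lies in $E(y_{[-N,N]})\setminus E(x_{[-N,N]})$, so by Lemma~\ref{lem:shift-lc-conditions} $x\not\sim_\lc y$. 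In the language of your recursion, the left tail $\cdots\word{ccc}$ gives $m(p)+d(p)\equiv 0$ and the right tail $\word{bbb}\cdots$ gives $m(q)\equiv 0$, so the outer terms of the $\max$ never overtake the middle one and the initial discrepancy persists for all $N'$. Thus the step you flagged as ``the subtle heart of the argument'' is not a technical hurdle but a genuine obstruction: context equivalence on $\{\word{b},\word{c}\}^*$ is actually $|w|=|v|$, $d(w)=d(v)$, \emph{and} $m(w)=m(v)$, and the paper's one-line justification glosses over exactly the point you isolated.
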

\begin{proof}
First note that $X_\text{non-sync} = \{ \word{b}, \word{c} \}^\Z \subseteq X$, which follows from the fact that $w \in \lang(X)$ is synchronizing if and only if $\word{a} \sqsubseteq w$.  Hence the set of \emph{non-synchronizing words}, denoted $\lang_\text{non-sync}(X) = \lang(X) \setminus \lang_\sync(X)$, is exactly $\lang_\text{non-sync}(X) = \{ \word{b}, \word{c} \}^\ast$.

Next we claim that for $(x,y) \in G^\lc(X,\sigma)$, $(x,y)$ is in $G^\lc_\text{non-sync}(X,\sigma)$ if and only if there is an $N$ such that $x_{[-N,N]} \sim y_{[-N,N]}$ and $x_n = y_n$ for all $|n| > N$.  To see this, note that since $x \sim_\lc y$, there is an $N$ such that $E(x_{[-N,N]}) = E(y_{[-N,N]})$ and $x_n = y_n$ for $|n| > N$ --- see Lemma \ref{lem:shift-lc-conditions}.  Since $x_{[-N,N]}, y_{[-N,N]} \in \{ \word{b}, \word{c} \}^\ast$, then it is clear from the graph above that $E(x_{[-N,N]}) = E(y_{[-N,N]})$ if and only if $d(x_{[-N,N]}) = d(y_{[-N,N]})$.
\end{proof}
The equivalence relation $\sim$ on $\{ \word{b}, \word{c} \}^\ast$ is of interest in its own right, and also appears in the short exact sequence defined in Section \ref{sec:ses}.

\section{Synchronizing Shifts from Minimal Shifts}\label{sec:minimal}

The construction in this example demonstrates that there are synchronizing shifts $X$ such that $X_\text{non-sync}$ is minimal.  This construction is essentially the same as in \cite[Example 3.2]{petersen86}.

\begin{definition}
A shift space $M$ is called \emph{minimal} if the only $\sigma$-invariant closed subsets of $M$ are $\emptyset$ and $M$ itself.
\end{definition}
One can show that equivalently a shift space is minimal if the orbit $\{\sigma^n(x)\}_{n \in \Z}$ is dense for every $x \in M$.  In this section we will show how to build a synchronizing shift space from an element $m$ in a minimal shift space.  Note that one can also show that forward orbit, $\{\sigma^n(x)\}_{n \geq 0}$, is also dense in $M$ for any $x \in M$.

Let $\mathcal{A}$ be an alphabet and $M \subseteq \mathcal{A}^\Z$ a non-trivial (\ie not the orbit of a periodic point) minimal shift space.  Fix an element $m \in M$.  Consider the following infinite graph $G(m)$ with labels in $\widetilde{\mathcal{A}} = \mathcal{A} \sqcup \{ \word{e} \}$.

\vspace{1em}

\begin{center}

\begin{tikzpicture}[scale=2.5]

\tikzset{vertex/.style = {shape=circle, draw}}
\tikzset{dots/.style = {}}
\tikzset{edge/.style = {->,>=stealth}}
\tikzset{every loop/.style={looseness=6, in=130, out=230}}

\node[vertex] (v0) at (0, 0) {$v$};
\node[vertex] (v1) at (1, 0) {};
\node[vertex] (v2) at (2, 0) {};
\node[vertex] (v3) at (3, 0) {};
\node[dots] (v4) at (3.2, 0) {\(\cdots\)};

\draw[edge] (v0) to node[below]{$m_0$} (v1);
\draw[edge] (v1) to node[below]{$m_1$} (v2);
\draw[edge] (v2) to node[below]{$m_2$} (v3);
\draw[edge] (v1) to[bend right] node[below]{$\word{e}$} (v0);
\draw[edge] (v2) to[bend right] node[below]{$\word{e}$} (v0);
\draw[edge] (v3) to[bend right] node[below]{$\word{e}$} (v0);

\path[->,>=stealth] (v0) edge[loop left] node[left] {$\word{e}$} (v0);

\end{tikzpicture}

\end{center}

The closure of the set of labeled bi-infinite paths on the above graph defines a shift space $X(m)$ in the alphabet $\widetilde{\mathcal{A}}$.  The language of $X(m)$ is exactly the collection of words that are labels of finite paths on the above graph.

In particular, since the forward orbit of $m$ is dense in $M$, we have $\lang(M) \subseteq \lang(X(m))$.  To see this, let $x_m \in X(m)$ be defined by
 \[ (x_m)_i = \begin{cases} m_i & i \geq 0 \\ \word{e} & i < 0 \end{cases} \,. \]
Then $\overline{\{ \sigma^n(x_m) \}_{n \geq 0}} = M \subseteq X(m)$.  In other words, each $w \in \lang(M)$ occurs in $(x_m)_{[0,\infty)} = m_{[0, \infty)}$.  Hence $\lang(M) \subseteq \lang(X(m))$.

\begin{theorem}
The shift space $X(m)$ is synchronizing.  Furthermore, the set of non-synchronizing points in $X(m)$ is exactly the shift space $M$.
\end{theorem}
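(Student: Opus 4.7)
The plan is to partition $X(m)$ according to whether the letter $\word{e}$ appears in a point, and to show that the points containing $\word{e}$ coincide with $X(m)_\sync$ while the $\word{e}$-free points coincide with $M$. This will simultaneously establish that $X(m)$ is synchronizing and identify the non-synchronizing set. For convenience, set $v_0 := v$ and write the vertices along the ray as $v_0, v_1, v_2, \ldots$.

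First I would establish that every word $w \in \lang(X(m))$ containing $\word{e}$ is a synchronizing word. Two features of $G(m)$ drive the argument: it is right-resolving (the out-edges of each $v_k$ carry the distinct labels $\word{e}$ and $m_k$), and every $\word{e}$-labeled edge terminates at $v_0$. Consequently, any path presentation of $w$ must pass through $v_0$ at a position where $w$ has an $\word{e}$, and right-resolvingness then determines the remainder of the path uniquely from the subsequent labels. Given valid $a \in E^-(w)$ and $b \in E^+(w)$, I can glue the $aw$-path to the $b$-tail of a $wb$-path through this common vertex, producing a path for $awb$ and proving $w$ synchronizing. Irreducibility of $X(m)$ follows by an analogous gluing: any two words can be joined via an $\word{e}$-edge back to $v_0$ followed by a forward ray segment reaching the required starting vertex.

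Next I would prove $\{x \in X(m) : \word{e} \text{ does not occur in } x\} = M$. The inclusion $M \subseteq X(m)$ is already observed in the excerpt. Conversely, given an $\word{e}$-free $x \in X(m)$, write $x = \lim_n x^{(n)}$ with each $x^{(n)}$ the label of a bi-infinite path in $G(m)$. Each such path must use the label $\word{e}$ infinitely often in the backward direction (the ray cannot support an infinite backward segment through $v_0$), but as $n \to \infty$ the $\word{e}$'s in $x^{(n)}$ must escape any fixed window $[-N,N]$ around the origin. Inside such an $\word{e}$-free window the path consists of forward ray edges, so the labels form a consecutive substring of $m$. Hence every window $x_{[-N,N]}$ lies in $\lang(M)$, and $x \in M$.

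The main obstacle is to show that no word $w \in \lang(M)$ is synchronizing in $X(m)$, which will render every $x \in M$ non-synchronizing (its windows $x_{[-N,N]}$ being such words). By density of the forward orbit of $m$ in $M$ and minimality, such $w$ occurs in $m$ at infinitely many positions; fix $i_1 < i_2$ with $w = m_{[i_1, i_1 + |w|)} = m_{[i_2, i_2 + |w|)}$. The paths presenting $w$ are precisely the forward ray segments $v_i \to v_{i+1} \to \cdots \to v_{i+|w|}$ for $i \in \mathcal{I}(w) := \{i \geq 0 : m_{[i, i+|w|)} = w\}$, giving $E(w) = \bigcup_{i \in \mathcal{I}(w)} P(v_i) \times F(v_{i+|w|})$ where $P$ and $F$ denote predecessor and follower sets of a vertex. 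To find $(a,b) \in E^-(w) \times E^+(w)$ lying in no single summand, I set $a := \word{e} m_0 m_1 \cdots m_{i_1 - 1}$ (reducing to $\word{e}$ when $i_1 = 0$); the lone $\word{e}$ at the start forces any path for $a$ to reach $v_0$ after one step and then march $i_1$ forward edges, so among $\mathcal{I}(w)$ only $j = i_1$ satisfies $a \in P(v_j)$. For $b$, I use that $m$ is not eventually periodic---a consequence of non-triviality of $M$, since an eventually periodic $m$ would, via a shifted limit argument $\sigma^{nT+N_0}(m) \to p$, produce a periodic point $p \in M$ whose finite orbit would have to coincide with $M$ by minimality---so there is a least $k \geq 0$ with $m_{i_1 + |w| + k} \neq m_{i_2 + |w| + k}$; I then set $b := m_{i_2 + |w|} m_{i_2 + |w| + 1} \cdots m_{i_2 + |w| + k}$. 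By construction $b \in F(v_{i_2 + |w|})$, while the unique $\word{e}$-free forward label from $v_{i_1 + |w|}$ disagrees with $b$ at position $k$, so $b \notin F(v_{i_1 + |w|})$. No $j \in \mathcal{I}(w)$ can accommodate both conditions; hence $awb \notin \lang(X(m))$ and $w$ is not synchronizing. The delicate bookkeeping required to simultaneously isolate the starting vertex via $a$ and exploit non-eventual-periodicity via $b$ is where I expect most of the technical work to lie.
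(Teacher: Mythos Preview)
Your argument is correct, but the paper takes a much shorter route for the key inclusion $M \subseteq X(m)_{\text{non-sync}}$. After establishing (as you do) that every word containing $\word{e}$ is synchronizing---so that $X(m)_{\text{non-sync}} \subseteq M$---the paper simply invokes two structural facts: $X(m)_{\text{non-sync}}$ is a closed $\sigma$-invariant subset of $X(m)$ (established earlier in the paper), and $M$ is minimal. Together these force $X(m)_{\text{non-sync}} \in \{\emptyset, M\}$; the possibility $\emptyset$ is then excluded because it would make $X(m)$ a shift of finite type, contradicting a result of Petersen. Your approach instead constructs, for each $w \in \lang(M)$, an explicit pair $(a,b) \in E^-(w) \times E^+(w)$ with $awb \notin \lang(X(m))$, using the non-eventual-periodicity of $m$ (which you correctly derive from the non-triviality of $M$). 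This is more laborious but entirely self-contained: it avoids the external citation and does not rely on the general fact that the non-synchronizing set is closed and invariant. Your proof also spells out the irreducibility of $X(m)$ and the identification of the $\word{e}$-free points with $M$, both of which the paper's proof leaves implicit.
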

\begin{proof}
We will first show that $X(m)_\text{non-sync} \subseteq M$.  Suppose that $e \sqsubseteq w$, then we know that $w$ is represented by a path $q$ on $G(m)$ passing through the vertex $v$.  Since any such path must pass through $v$, it is then clear that $w$ is synchronizing by the technique used in Lemma \ref{cf-shift-sync-vertex-correspondence}.  Then, by minimality, since $X(m)_\text{non-sync} \subseteq M$ then either $X(m)_\text{non-sync} = M$ or $X(m)_\text{non-sync} = \emptyset$.  The latter cannot be true since it would imply $X$ is a shift of finite type, which is a contradiction --- see \cite[Example 3.2]{petersen86}.
\end{proof}

\end{document}